\documentclass[11pt,a4paper]{article}

\usepackage[left=2.3cm, top=2.3cm,bottom=2.3cm,right=2.3cm]{geometry}

\usepackage{mathtools,amsmath,amssymb,amsthm,mathrsfs,calc,graphicx,stmaryrd,xcolor,dsfont,tikz,pgfplots,bbm,float,array,epsfig,hyperref,color}
\usetikzlibrary{calc}
\usepackage[numbers,square]{natbib}
\usepackage[british]{babel}
\usepackage{amsfonts}              
\usepackage{enumitem}

\usepackage{manfnt} 
\usepackage{tikz} 

\usepackage{tikz}
\usetikzlibrary{arrows.meta}

\numberwithin{equation}{section}
\numberwithin{figure}{section}

\allowdisplaybreaks[4]

\newtheorem {theorem}{Theorem}[section]
\newtheorem {proposition}[theorem]{Proposition}
\newtheorem {lemma}[theorem]{Lemma}
\newtheorem {corollary}[theorem]{Corollary}

{\theoremstyle{definition}
\newtheorem{definition}[theorem]{Definition}

\newtheorem*{convention*}{Convention}

\newtheorem {example}[theorem]{Example}
\newtheorem {remark}[theorem]{Remark}
}

{
\theoremstyle{remark}
}

\newcommand{\Vol}{\operatorname{Vol}}

\renewcommand{\Im}{\operatorname{Im}}  

\newcommand{\ii}{{\rm{i}}}

\def\ba{\begin{array}}
\def\ea{\end{array}}
\def\bea{\begin{eqnarray} \label}
\def\eea{\end{eqnarray}}
\def\be{\begin{equation} \label}
\def\ee{\end{equation}}
\def\bit{\begin{itemize}}
\def\eit{\end{itemize}}
\def\ben{\begin{enumerate}}
\def\een{\end{enumerate}}

\def\lan{\langle}
\def\ran{\rangle}

\def\CC{\mathbb{C}}
\def\E{\mathbb{E}}

\def\N{\mathbb{N}}
\def\P{\mathbb{P}}

\def\R{\mathbb{R}}
\def\RRd1{\mathbb{R}^{d+1}}

\def\dint{\textup{d}}

\newcommand{\eee}{{\rm e}}

\newcommand{\ind}{\mathbbm{1}}
\newcommand{\eps}{\varepsilon}
\newcommand{\pos}{\mathop{\mathrm{pos}}\nolimits}

\newcommand{\lin}{\mathop{\mathrm{lin}}\nolimits}
\newcommand{\conv}{\mathop{\mathrm{conv}}\nolimits}

\newcommand{\dd}{{\rm d}}

\DeclareMathOperator{\relint}{relint}

\def\sgn{\mathrm{sgn}}

\def\factor{2/3*0.1}
\newcommand{\octahedron}{
	\begin{tikzpicture}[thick,scale=5]
	\coordinate (A1) at (0,0);
	\coordinate (A2) at (0.6*\factor,0.2*\factor);
	\coordinate (A3) at (1*\factor,0);
	\coordinate (A4) at (0.4*\factor,-0.2*\factor);
	\coordinate (B1) at (0.5*\factor,0.5*\factor);
	\coordinate (B2) at (0.5*\factor,-0.5*\factor);

	\draw[solid][line width=0.35pt] (A1) -- (A4) -- (B1);
	\draw[solid][line width=0.35pt] (A1) -- (A4) -- (B2);
	\draw[solid][line width=0.35pt] (A3) -- (A4) -- (B1);
	\draw[solid][line width=0.35pt] (A3) -- (A4) -- (B2);
	\draw[solid][line width=0.35pt] (B1) -- (A1) -- (B2) -- (A3) --cycle;
	\end{tikzpicture}
}

\newcommand{\Ccrosspoly}{C^{\octahedron}}
\newcommand{\Ccube}{C^{\mbox{\mancube}}}

\begin{document}

\title{\bfseries Angles and volumes of regular polytopes in geometries  of constant curvature}

\author{Zakhar Kabluchko and Philipp Schange}

\date{}

\maketitle

\begin{abstract}
We derive closed-form expressions for the internal and external angles of $d$-dimensional cubes, regular simplices and regular crosspolytopes in geometries of constant sectional curvature $\kappa \in \mathbb R$. More generally, we determine internal and external angles at arbitrary faces of rectangular boxes, acute orthocentric simplices, rectangular orthocentric simplices and asymmetric crosspolytopes in arbitrary dimension $d$. We also characterize Riemannian tangent and normal cones of these polytopes, up to isometry.  Combining internal angle formulas with the Poincar\'e relation, we derive formulas for the Riemannian volume of these polytopes if the dimension $d$ is even. All formulas are stated in terms of the standard normal distribution function $\Phi(x)$ and  its imaginary version $\Phi({\rm{i}} x)$. For example, if $d\geq 2$ is even, then the hyperbolic volume of the ideal regular simplex in the $d$-dimensional hyperbolic space of curvature $\kappa = -1$ is
\[
\frac{\pi^{d/2}}
{\sqrt{2}\, {\rm{i}}^{d}\,
\Gamma\left(\frac{d+1}{2}\right)}
\int_{-\infty}^{\infty}
\left[
\Phi\left(\frac{\ii y}{\sqrt d}\right)^{d+1}
+
\Phi\left(-\frac{\ii y}{\sqrt d}\right)^{d+1}
\right]
\eee^{-y^2/2}\dd y.
\]

\noindent
\bigskip
\\
\textbf{Keywords}. Spherical geometry, hyperbolic geometry, space of constant sectional curvature, hyperbolic volume, spherical volume, convex cone, solid angle, tangent cone, normal cone, internal angle, external angle, regular simplex, cube, crosspolytope,   orthocentric simplex, standard normal distribution function, error function\\
\textbf{MSC 2020}. Primary 52A55, 52B11; Secondary 51M20, 52A38, 60E10, 51M25, 60D05, 33B20.
\end{abstract}

\tableofcontents

\section{Introduction and main results}
\subsection{Motivation and overview}
Angles at faces are among the basic local invariants of a convex polytope. They arise in sphere-packing bounds \cite{rogers_packing,coxeter_few_rogers_covering_equal,boeroeczky_packing_spaces_const_curv,kellerhals_ball_packing,rogers_book_packing_covering}, in the study of neighborliness of random polytopes~\cite{vershik_sporyshev,donoho_neighborliness_proportional,donoho_tanner2}, in the study of random projections and sections of polytopes~\cite{affentranger_schneider_1992,boroczky_henk,donoho_tanner2,donoho_tanner1,KabluchkoSeidel2022}, and in convex optimization with random data \cite{amelunxen_lotz_mccoy_tropp_living_on_the_edge,donoho_tanner_sparse_nonnegative_sol,donoho_tanner_observed_universality}. In spherical and hyperbolic geometry they have an additional global significance: in even dimension, the Poincar\'e relation expresses the Riemannian volume of a polytope in terms of an alternating sum of its internal angle sums \cite[Section~4.2]{camenga_thesis}. Basic properties of angles of convex polytopes, including the
Gram--Euler relation, are reviewed in
\cite[Section~14]{GruenbaumBook},
\cite[Sections~18E--18F]{McMullenConvexPolytopesPolyhedra},
and \cite[Section~15.2.3]{HenkRichterGebertZiegler2017}.

In this paper, we study internal and external angles of polytopes in \(d\)-dimensional spaces of constant sectional curvature \(\kappa\in\mathbb R\). Our main contributions are as follows.

\begin{itemize}
\item \emph{Angles and local cones for general classes of polytopes.}
We determine, up to isometry, the Riemannian tangent and normal cones at arbitrary faces of rectangular boxes, acute orthocentric simplices, rectangular simplices, and asymmetric crosspolytopes. From these descriptions, we derive explicit formulas for the corresponding internal and external angles, valid in arbitrary dimension and across spherical, Euclidean, and hyperbolic geometry.
\item \emph{Regular polytopes.}
As special cases, we obtain closed-form expressions for the angles at faces of arbitrary dimension of regular simplices, cubes, and crosspolytopes, the three families of regular polytopes that exist in every dimension.
\item \emph{A simplex--cube correspondence.}
We show that, after a natural matching of their edge-length parameters, regular simplices and cubes have isometric tangent and normal cones at faces of equal dimension. In particular, the angles of the ideal hyperbolic cube and the ideal hyperbolic simplex at the faces of the same dimension are equal.
\item \emph{Volume formulas.}
We derive explicit formulas for the Riemannian volumes of rectangular
boxes in every dimension. In even dimensions, the Poincar\'e relation
converts our internal-angle formulas into volume formulas for acute
orthocentric simplices, rectangular simplices, and asymmetric
crosspolytopes, including regular simplices and regular crosspolytopes
as special cases.
\end{itemize}

\subsection{Angles of regular polytopes}
\label{subsec:intro_angles_regular_polytopes}

Fix a dimension $d\geq 2$ and a curvature $\kappa\in\R$.
We consider polytopes in the $d$-dimensional space of constant sectional curvature
$\kappa$, which is spherical for $\kappa>0$, Euclidean for $\kappa=0$,
and hyperbolic for $\kappa<0$. We use the Klein model (see Section~\ref{subsec:klein_model} for details) in which the polytopes remain ordinary Euclidean polytopes, whereas the curvature $\kappa$ changes the tangent space inner product with respect to which their local cones are measured.
We write
$\Delta_\ell^d(\kappa)$, $\Box_\ell^d(\kappa)$, and
$\Diamond_\ell^d(\kappa)$ for, respectively, a regular simplex, cube,
and crosspolytope of geodesic edge length $\ell$ in this space, whenever
the corresponding polytope exists. If the center of the polytope is
placed at the origin of the Klein model, its Klein-model realization is
an ordinary Euclidean regular polytope, although the geodesic edge length \(\ell\) is in general different from the corresponding Euclidean edge length. For $\kappa<0$, we also write $\Delta_\infty^d(\kappa)$, $\Box_\infty^d(\kappa)$, and
$\Diamond_\infty^d(\kappa)$ for the corresponding ideal regular
polytopes, whose vertices lie on the ideal boundary of the hyperbolic space.

For a $k$-dimensional face $F$ of a polytope $P$, we denote the internal
and external angles at $F$ by $\beta_\kappa(F,P)$ and
$\gamma_\kappa(F,P)$, respectively. These are the solid angles of the
corresponding tangent and normal cones. The solid angle of a cone is
the normalized spherical measure of its intersection with the unit
sphere in its linear span, where both the unit sphere and its spherical
measure are induced by the Riemannian inner product on the tangent
space; see Sections~\ref{subsec:tangent_normal_cones} and~\ref{subsec:solid_angles} for details.

We now state formulas for $\beta_\kappa(F,P)$ and $\gamma_\kappa(F,P)$
for the three families of regular polytopes introduced above. By
symmetry, these angles depend only on $d$, $k$, $\kappa$, and $\ell$,
and not on the particular choice of the $k$-dimensional face $F$.

All formulas below will be expressed in terms of the complex-analytic extension
of the standard normal distribution function,
\begin{equation}\label{eq:intro_entire_normal_distribution_function}
\Phi(z)
=
\frac12+\frac1{\sqrt{2\pi}}
\int_0^z \eee^{-t^2/2}\,\dd t,
\qquad z\in\CC,
\end{equation}
where the integral may be taken along any contour joining $0$ to $z$.

\begin{convention*}
Let $\ii=\sqrt{-1}$.
Throughout the paper, square roots of negative real numbers are
understood according to the convention
\[
\sqrt{-a}:=\ii\sqrt a,
\qquad a>0.
\]
\end{convention*}

To state the spherical, Euclidean, and hyperbolic formulas in a common
form, we use the following parameter:
\begin{equation}\label{eq:intro_curvature_cosine}
C := C_\kappa(\ell)
:=
\begin{cases}
\cosh\bigl(\sqrt{-\kappa}\,\ell\bigr),&\kappa<0,\\
1,&\kappa=0,\\
\cos\bigl(\sqrt{\kappa}\,\ell\bigr),&\kappa>0.
\end{cases}
\end{equation}

In the formulas below, let $\xi\sim\mathrm N(0,1)$ be a standard
normal random variable.

\begin{theorem}[Angles of regular simplices and cubes]
\label{theo:intro_angles_regular_simplices_and_cubes}
Let $k\in\{0,\ldots,d-1\}$.  We first consider regular polytopes with finite geodesic edge length \(\ell\); the ideal hyperbolic case will be treated separately below.
\begin{enumerate}
\item Let $F$ be a $k$-dimensional face of
$\Delta_\ell^d(\kappa)$.  Then
\begin{align}
\gamma_\kappa\bigl(F,\Delta_\ell^d(\kappa)\bigr)
&=
\E\left[
\Phi\left(
\sqrt{\frac{C}{1+kC}}\,\xi
\right)^{d-k}
\right],
\label{eq:intro_regular_simplex_external_angle}
\\
\beta_\kappa\bigl(F,\Delta_\ell^d(\kappa)\bigr)
&=
\E\left[
\Phi\left(
\sqrt{-\frac{C}{1+dC}}\,\xi
\right)^{d-k}
\right].
\label{eq:intro_regular_simplex_internal_angle}
\end{align}

\item Let $F$ be a $k$-dimensional face of
$\Box_\ell^d(\kappa)$. Then
\begin{align}
\gamma_\kappa\bigl(F,\Box_\ell^d(\kappa)\bigr)
&=
\E\left[
\Phi\left(
\sqrt{\frac{C-1}{2+k(C-1)}}\,\xi
\right)^{d-k}
\right],
\label{eq:intro_regular_cube_external_angle}
\\
\beta_\kappa\bigl(F,\Box_\ell^d(\kappa)\bigr)
&=
\E\left[
\Phi\left(
\sqrt{-\frac{C-1}{2+d(C-1)}}\,\xi
\right)^{d-k}
\right].
\label{eq:intro_regular_cube_internal_angle}
\end{align}
\end{enumerate}
In the hyperbolic case, letting $\ell\to\infty$, equivalently
$C\to\infty$, yields the corresponding angle formulas for ideal
regular simplices and cubes. We record this special case separately
below.
\end{theorem}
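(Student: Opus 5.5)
We will work in the Klein model, where the polytope with centre at the origin is an ordinary Euclidean regular polytope, and the tangent-space inner product at a point $x$ is the Klein metric $g_x$. Because of symmetry we only need one $k$-face $F$, and we evaluate the cones at its barycentre $x_F$, which lies in the relative interior of $F$. Angles are unchanged if we quotient out the lineality space $T_xF$. The reduced tangent cone then lives in the $g$-orthogonal complement $N=(T_xF)^{\perp_g}$ of dimension $d-k$, and it is generated by the projections of the $d-k$ directions towards the vertices (for the simplex) or along the edges (for the cube) not in $F$. The plan has three steps:

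\begin{enumerate}
\item Compute the Gram matrix of these $d-k$ generators in $g_x$, after projecting orthogonally to $T_xF$. By symmetry under the stabiliser of $F$, this matrix has the form $aI+bJ$, where $J$ is the all-ones matrix. So the reduced tangent cone is isometric to a symmetric simplicial cone determined by the single ratio $\rho=b/a$.
\item Find $\rho$ as a function of $C$, $k$ and $d$.
\item Convert to angles using the Gaussian formula for such cones, stated below.
\end{enumerate}

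For step 1, the Klein metric is $g_x(u,v)=\frac{\langle u,v\rangle}{1+\kappa|x|^2}-\frac{\kappa\langle x,u\rangle\langle x,v\rangle}{(1+\kappa|x|^2)^2}$, up to the scaling convention. The tangent vectors are explicit, for instance $v_j-x_F$ for the simplex, so step 1 is a finite linear-algebra computation.

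For step 2, we use that geodesic edge length $\ell$ corresponds to the Euclidean circumradius through $\cos(\sqrt\kappa\,\ell)=\frac{1+\kappa\langle v_i,v_j\rangle}{\sqrt{(1+\kappa|v_i|^2)(1+\kappa|v_j|^2)}}$. The intended output is this: the normal cone of the simplex at $F$ is isometric to the cone spanned by $d-k$ unit vectors with pairwise inner product $-\frac{C}{1+kC}$, and the tangent cone is the polar-type cone with correlation $\frac{C}{1+dC}$. The cube is handled the same way, with $C$ replaced by $(C-1)/2$ after normalising. As a check, at $\kappa=0$ and $C=1$ we must recover the Euclidean regular simplex, and for the cube the orthant, where $C-1=0$ gives $\Phi(0)^{d-k}=2^{-(d-k)}$.

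For step 3, we invoke the standard Gaussian formula for symmetric cones. Let $D$ be the polyhedral cone $\{y:\langle y,u_i\rangle\le 0\}$ whose normals $u_i$ have Gram matrix $(1-r)I+rJ$. Its solid angle is $\P(\text{all } \eta_i\le0)$ for a Gaussian vector with that covariance. Representing $\eta_i=\sqrt{1-r}\,\zeta_i-\sqrt{r}\,\xi$ gives
$$\E\Bigl[\Phi\Bigl(\sqrt{\tfrac{r}{1-r}}\,\xi\Bigr)^{d-k}\Bigr].$$
For negative $r$ the root becomes imaginary, and we justify this by analytic continuation in $r$. Both sides are analytic in $r$ on the interval where the covariance matrix is positive definite, and the right-hand side is real because $\Phi(\ii x)=\frac12+\ii\cdot(\text{odd in }x)$ and $\xi$ is symmetric. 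With $r/(1-r)=\frac{C}{1+kC}$ we get $r=\frac{C}{1+(k+1)C}$. The internal angle uses the dual Gram matrix, whose inverse again has the form $aI+bJ$, giving $r/(1-r)=-\frac{C}{1+dC}$.

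\textbf{Main obstacle.} The hardest part is step 2: getting the projected Gram matrix in the curved metric in closed form, and checking that the resulting correlation depends on $k$ exactly as $\frac{C}{1+kC}$ for $\gamma$ and on $d$ only for $\beta$. I would first try working in the hyperboloid or sphere model instead of Klein. There the vertices are vectors $w_i$ in $\R^{d+1}$ with $\langle w_i,w_j\rangle=C$ (up to sign and scale), and the normal cone at $F$ is read off from the linear algebra of the Gram matrix $(1-C)I+CJ$ restricted to vertices outside $F$, after a Schur complement against the vertices of $F$. That Schur complement is exactly what produces the $1+kC$ denominator, and inverting the full $(d+1)\times(d+1)$ Gram matrix produces $1+dC$ for the internal angle.
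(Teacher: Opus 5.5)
\paragraph{Verdict.} Your outline is viable, and Step~3 is correct as written. With the $u_i$ taken as the reduced tangent generators, $b/a=r/(1-r)=\frac{C}{1+kC}$ gives $\gamma$. With the $u_i$ taken as the facet normals, $b/a=-\frac{C}{1+dC}$ gives $\beta$.

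\paragraph{The gap is Step~2.} This step carries the whole content of the theorem, and you neither compute the Gram matrices nor state them correctly. The normal cone at $F$ is spanned by the outward normals of the $d-k$ facets through $F$. Their mutual angles are the dihedral angles of $\Delta_\ell^d(\kappa)$, so they cannot depend on $k$. The correct values are:
\begin{itemize}
\item the facet normals have Gram matrix a positive multiple of $I-\frac{C}{1+dC}J$, i.e.\ unit vectors with inner product $-\frac{C}{1+(d-1)C}$;
\item the reduced tangent generators have Gram matrix a positive multiple of $I+\frac{C}{1+kC}J$, i.e.\ inner product $\frac{C}{1+(k+1)C}$.
\end{itemize}
Your ``intended output'' swaps $k$ and $d$ between the two cones and treats the ratio $b/a$ as an inner product of unit vectors. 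Your own sanity check exposes this: for $C=1$, $k=0$ your value is $-\frac{1}{1+k}=-1$, i.e.\ antipodal facet normals at a vertex, which is absurd. So the parameters in Step~3 are read off from the formula to be proved, not derived.

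For the cube nothing is derived at all. ``Replace $C$ by $(C-1)/2$'' is the matching $C_\Box=1+2C_\Delta$, which the paper obtains as a \emph{consequence} of two separate computations. Your Schur-complement idea does not apply to the cube, since it is not a simplex.

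\paragraph{How to close it.} Your fallback does work for the simplex, and it has the merit that $C$ enters directly.
\begin{itemize}
\item In the sphere/hyperboloid model the vertex Gram matrix is a nonzero multiple of $(1-C)I+CJ$, by the definition of $C$.
\item For $x\in\relint F$ one has $x\in\lin(w_0,\dots,w_k)$ and the tangent space is $x^\perp$.
\item So the reduced tangent cone is generated by the projections of $w_{k+1},\dots,w_d$ onto $\lin(w_0,\dots,w_k)^\perp$. Their Gram matrix is the Schur complement, a multiple of $I+\frac{C}{1+kC}J$.
\item The normal cone is generated by the dual-basis vectors $w_j^*$, $j>k$, which lie in $x^\perp$. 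Their Gram matrix is the corresponding block of the inverse, a multiple of $I-\frac{C}{1+dC}J$.
\item The case $\kappa=0$ must be handled separately.
\end{itemize}
The Klein-model computation you feared is also short: at the barycentre $x_F$, the vectors $v_j-x_F$ are already $g_{x_F}$-orthogonal to $T_{x_F}F$, so no projection is needed.

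\paragraph{Comparison with the paper.} The paper stays in the Klein model. Its key fact is that the rescaled Riemannian facet normals have base-point-independent Gram matrix $\langle a_r,a_s\rangle+\kappa b_rb_s$, which is exactly the dual-basis Gram matrix of your linear model. It then identifies normal and tangent cones as orthocentric cones, uses the polarity theorem, and quotes the $\mathbf{g}_m$ formula (the same Gaussian identity as your Step~3). Only at the end does it convert $\tau$ into $C$.

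\paragraph{For the cube.} Use that same facet-normal formula: the active normals $e_1,\dots,e_{d-k}$ with $b_r=\tau$ have Gram matrix $I+\kappa\tau^2J$. Equivalently, at the barycentre $(\tau,\dots,\tau,0,\dots,0)$ no projection is needed, and $g_x(e_i,e_j)$ is a positive multiple of $\delta_{ij}-\frac{\kappa\tau^2}{1+(d-k)\kappa\tau^2}$. You must then still derive $\kappa\tau^2=-\frac{C-1}{2+d(C-1)}$ from the geodesic edge length.
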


\begin{example}[Ideal hyperbolic simplices and cubes]
\label{exa:intro_ideal_simplex_cube}
Let $\kappa<0$ and let $k\in\{1,\ldots,d-1\}$. If $F_\Delta$ and
$F_\Box$ are $k$-dimensional faces of the ideal regular simplex
$\Delta_\infty^d(\kappa)$ and the ideal regular cube
$\Box_\infty^d(\kappa)$, respectively, then
\[
\gamma_\kappa(F_\Delta,\Delta_\infty^d(\kappa))
=
\gamma_\kappa(F_\Box,\Box_\infty^d(\kappa))
=
\E\left[
\Phi\left(\frac{\xi}{\sqrt{k}}\right)^{d-k}
\right],
\]
and
\[
\beta_\kappa(F_\Delta,\Delta_\infty^d(\kappa))
=
\beta_\kappa(F_\Box,\Box_\infty^d(\kappa))
=
\E\left[
\Phi\left(\frac{\ii\xi}{\sqrt d}\right)^{d-k}
\right].
\]
Thus, ideal regular simplices and cubes have the same internal and
external angles at faces of the same positive dimension. This is not
merely a coincidence of the formulas: the corresponding tangent and
normal cones are isometric;  see Section~\ref{sec:cubes_simplices_isometric_local_cones} for more details.
\end{example}

The local cones of regular crosspolytopes have a different structure
from those of regular simplices and cubes.

\begin{theorem}[Angles of regular crosspolytopes]
\label{theo:intro_angles_regular_crosspolytopes}
Let $k\in\{0,\ldots,d-1\}$ and suppose that $\ell<\infty$.
Let $F$ be a $k$-dimensional face of
$\Diamond_\ell^d(\kappa)$. Then
\begin{align}
\gamma_\kappa\bigl(F,\Diamond_\ell^d(\kappa)\bigr)
&=
\frac12\,
\E\left[
\left(
2\Phi\left(
\sqrt{\frac{C}{1+kC}}\,|\xi|
\right)-1
\right)^{d-k-1}
\right],
\label{eq:intro_regular_crosspolytope_external_angle}
\\
\beta_\kappa\bigl(F,\Diamond_\ell^d(\kappa)\bigr)
&=
\frac12
-
\frac{2^{d-k-1}}{\sqrt{2\pi}}\,
\E\left[
\frac1{\xi}\,
\Im\left[
\Phi\left(
\ii\sqrt{\frac{C}{1+(d-1)C}}\,\xi
\right)^{d-k-1}
\right]
\right].
\label{eq:intro_regular_crosspolytope_internal_angle}
\end{align}
In the second formula, the integrand at $\xi=0$ is understood by
continuous extension. In the hyperbolic case, the corresponding formulas for faces of
positive dimension of the ideal regular crosspolytope are obtained by
letting $C\to\infty$.
\end{theorem}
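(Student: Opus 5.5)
Our approach is to identify, up to isometry, the tangent and normal cones of $\Diamond_\ell^d(\kappa)$ at a $k$-face. We then reduce them to cones whose solid angles are known, namely orthant-type cones with a Gaussian representation.

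Place the crosspolytope in the Klein model with centre at the origin, with vertices $\pm r e_1,\ldots,\pm r e_d$. Here $r$ is determined by $\ell$, so that the geodesic distance between adjacent vertices $re_i$, $re_j$ equals $\ell$. Using the curvature-$\kappa$ inner product on $\R^{d+1}$ (Minkowski for $\kappa<0$), the Gram matrix of the vertices has diagonal $1$ and entry $C$ between adjacent vertices $v_i,v_j$ ($i\ne j$). Between antipodal vertices the entry is $C'=\cos(\sqrt\kappa\,\cdot\mathrm{dist})$, which is forced to equal $2C-1$ by the configuration; checking this consistency is a short step.

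Take $F=\conv(v_1,\ldots,v_{k+1})$. Since $-v_1,\ldots,-v_{k+1}$ are not adjacent to $F$, the tangent cone at $F$, modulo its lineality space $\lin F$, is generated by the directions to $\pm v_{k+2},\ldots,\pm v_d$.

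\textbf{Normal cone.} Project orthogonally onto the orthogonal complement of $\lin(v_1,\ldots,v_{k+1})$ in the tangent space at the barycentre of $F$. The projections $w_j^{\pm}$ of the $2(d-k-1)$ remaining vertices together with one projected antipode direction have Gram matrix computable from $C$. Explicitly, projecting out the span of the $k+1$ face vertices, with Gram matrix $(1-C)I+C\mathbf 1\mathbf 1^\top$, shifts all pairwise products by a constant $-(k+1)C^2/(1+kC)$. The normal cone is the polar of the cone generated by the projected vectors, and the key observation is that it is a product-like cone: a ray $u$ together with $d-k-1$ symmetric pairs. We then apply the Gaussian method, which the paper presumably develops in the solid-angle section. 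For a cone $\{x:\langle x,a_i\rangle\le 0\}$, the solid angle is $\P(\langle g,a_i\rangle\le0\ \forall i)$ for a standard Gaussian vector $g$ in the tangent space. Writing $\langle g,a_i\rangle=\alpha\eta+\beta\zeta_i$, with $\eta$ the common component along the symmetry direction and $\zeta_i$ independent, conditioning on $\eta$ yields the product $\Phi(\cdot)^{d-k-1}$. The symmetric pairs $\pm$ produce the factors $2\Phi(c|\xi|)-1$, and the half-space from the remaining facet constraint produces the overall factor $\tfrac12$. We read off $c^2=C/(1+kC)$ from the covariance computed above, exactly as in the simplex case of Theorem~\ref{theo:intro_angles_regular_simplices_and_cubes}.

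\textbf{Tangent cone.} The internal angle is the angle of a cone defined by linear inequalities whose Gram matrix has a negative correlation parameter. The covariance has the form $I - \tfrac{C}{1+(d-1)C}\mathbf 1\mathbf 1^\top$ restricted to each sign-pattern orthant. The Gaussian representation then requires an imaginary common factor, $\sqrt{-\,C/(1+(d-1)C)}\,\xi$, as in formula \eqref{eq:intro_regular_simplex_internal_angle}. The tangent cone is a union over sign choices (one of $\pm v_j$ for each $j$) structured like a crosspolytope cone, so its solid angle is an expectation of a function of $\Phi(\ii c\xi)$ that is not simply a power. We plan to write the indicator of the event $\{\langle g,a\rangle\ge0\}$ for the extra symmetric direction via the Fourier identity $\ind\{t>0\}=\tfrac12+\tfrac1\pi\int_0^\infty \sin(st)/s\,\dd s$. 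Combining this with analytic continuation of the identity $\E[\Phi(c\xi)^m]$ from real $c$ to imaginary $c$ gives $\tfrac12-\tfrac{2^{d-k-1}}{\sqrt{2\pi}}\E[\xi^{-1}\Im\Phi(\ii c\xi)^{d-k-1}]$. The factor $2^{d-k-1}$ counts the sign orthants.

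\textbf{Main obstacle.} This last step, justifying the analytic continuation (the expectation is entire in $c^2$ for $|c^2|$ small, then extended), and correctly handling the half-space/sign decomposition, is the main obstacle. We would first verify it for $d-k-1=1$ directly, then do the general case by induction on $d-k$. The ideal case follows by letting $C\to\infty$ with dominated convergence.
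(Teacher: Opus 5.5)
Your treatment of the normal cone is valid, and it differs from the paper only in how the cone is identified. You project vertex vectors in the sphere/hyperboloid model. The paper instead works in the Klein model with the facet normals $a_\eps$, whose Riemannian Gram matrix is $\langle a_\eps,a_\delta\rangle+\kappa$. Your Gram data are correct. We have $C'=2C-1$, and after projecting out $\lin(v_1,\ldots,v_{k+1})$ the images $w_j^{\sigma}$ of $\sigma v_j$ ($j\ge k+2$, $\sigma=\pm1$) satisfy $\langle w_i^{\sigma},w_j^{\sigma'}\rangle=\alpha^2+\sigma\sigma'\beta^2\delta_{ij}$ with $\beta^2/\alpha^2=(1+kC)/C$ (for the Riemannian form on the complement). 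Consequently, modulo lineality, the tangent cone is $\{x:\mu\sum_{j=1}^m|x_j|\le x_0\}$ and the normal cone is $\{y:|y_j|\le\mu y_0\}$, where $m=d-k-1$ and $\mu^2=C/(1+kC)$. Conditioning on $y_0$ then gives the external angle. You should derive this cube/crosspolytope structure explicitly rather than calling it ``product-like''. You should also note that $\kappa=0$ is not covered by a curvature-$\kappa$ form on $\R^{d+1}$.

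The genuine gap is the internal angle. The step ``Fourier identity combined with analytic continuation of $\E[\Phi(c\xi)^m]$ to imaginary $c$ gives the formula'' is not a derivation, for three reasons. First, continuing $c\mapsto\E[\Phi(c\xi)^m]$ produces orthocentric-type expressions such as $\E[\Phi(\ii c\xi)^m]$ (the simplex internal angle), not the $\xi^{-1}\Im[\cdot]$ structure of the target. Second, the sign-orthant pieces $\pos(e_0,e_0+e_1/\mu,\ldots,e_0+e_m/\mu)$ are not orthocentric cones with finite parameters, so no $\mathbf g_m$-formula applies to them. Third, there is no visible induction in $d-k$.

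Your instinct to insert $\ind\{t>0\}=\frac12+\frac1\pi\int_0^\infty\sin(st)/s\,\dd s$ is the right one; it is the paper's Gil--Pelaez inversion. But then no continuation is needed. Write $\beta=\P[Z\le0]$ with $Z=-\xi_0+\mu\sum_{j=1}^m|\xi_j|$. The imaginary arguments come from the half-normal transform $\frac1{\sqrt{2\pi}}\int_0^\infty\eee^{\ii sx}\eee^{-x^2/2}\,\dd x=\eee^{-s^2/2}\Phi(\ii s)$, which together with independence gives
\[
\E\,\eee^{\ii tZ}=\eee^{-t^2/2}\bigl(2\,\eee^{-\mu^2t^2/2}\Phi(\ii\mu t)\bigr)^m .
\]
Inversion followed by the substitution $y=\sqrt{1+m\mu^2}\,t$, using $\mu^2/(1+m\mu^2)=C/(1+(d-1)C)$, yields the claimed formula.

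The real technical point, which you do not address, is that $\int_0^\infty\sin(st)/s\,\dd s$ converges only conditionally, so exchanging it with $\E$ must be justified. The paper does this via $\arctan(Z/\varepsilon)=\int_0^\infty\eee^{-\varepsilon t}\sin(tZ)/t\,\dd t$, together with $\E|Z|<\infty$ and integrability of $t\mapsto\E\,\eee^{\ii tZ}$.

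Finally, for the ideal case it is simpler to rerun the cone identification with the ideal parameters, which is legitimate for $k\ge1$. That avoids having to justify continuity of the angles as $C\to\infty$.
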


The preceding formulas simultaneously cover spherical, Euclidean, and
hyperbolic geometry. Although some arguments of $\Phi$ are purely
imaginary, all the quantities displayed above are real.

\begin{remark}[Related literature]
Formulas for external angles of regular Euclidean simplices go back to
Ruben and Hadwiger \cite{ruben,hadwiger}, \cite[Section~15.2.3]{HenkRichterGebertZiegler2017}. A formula for the internal angle at a vertex of a regular Euclidean
simplex was obtained by Rogers
\cite[Section~4]{rogers_packing},
\cite[pp.~85--90]{rogers_book_packing_covering}, using a method which
he attributes to H.~E.~Daniels, while internal angles at faces of
arbitrary dimension were computed by Vershik and Sporyshev
\cite[Lemma~4]{vershik_sporyshev}, who also derived a formula for the
volumes of regular spherical simplices. A special case of the latter
formula, corresponding to geodesic edge lengths greater than $\pi/2$,
can be found in \cite[Satz~3, p.~283]{boehm_hertel_book}.   These results can be expressed in terms of the
function
\[
g_n(r):=
\E\left[\Phi\bigl(\sqrt r\,\xi\bigr)^n\right],
\qquad r\geq - 1/n,
\]
following the notation of
\cite{kabluchko_zaporozhets_absorption}. More general formulas for
angles of orthocentric simplices and orthocentric cones were derived
in~\cite{kabluchko_schange_angles_orthocentric_simplices}. Volumes of
regular spherical simplices are described by the Schl\"afli function,
whose properties are discussed by Coxeter
\cite{coxeter_upper_bound_nonoverlap_spheres}; see also
\cite{shoom_schlaefli} for a recent account of this literature.
In view of these results, the formulas
\eqref{eq:intro_regular_simplex_external_angle} and
\eqref{eq:intro_regular_simplex_internal_angle} for the external and
internal angles of hyperbolic regular simplices are not surprising;
to the best of our knowledge, however, they have not appeared
explicitly in the literature.

For Euclidean regular crosspolytopes, external angles were computed by
Betke and Henk \cite{betke_henk}; see also~\cite{HenkHernandezCifre2008a,KabluchkoZaporozhets2019} for generalizations. Probabilistic representations for
both internal and external angles of Euclidean regular crosspolytopes were obtained in
\cite{KabluchkoSeidel2022}; in particular, the internal angle is
represented there in terms of a probability involving a sum of
absolute values of independent Gaussian random variables.  Applications to random sections and projections of regular polytopes are given in~\cite{affentranger_schneider_1992,boroczky_henk,KabluchkoSeidel2022}. We are not
aware of previous formulas for the Riemannian internal and external
angles at arbitrary-dimensional faces of regular spherical or
hyperbolic cubes and crosspolytopes. Even in the Euclidean case, the
one-dimensional representation
\eqref{eq:intro_regular_crosspolytope_internal_angle} for the internal
angle of a regular crosspolytope seems not to have been recorded
previously; in particular, it is not contained in~\cite{KabluchkoSeidel2022}.
\end{remark}

\subsection{Riemannian volumes}
\label{subsec:intro_riemannian_volumes}
Our angle formulas also lead to explicit formulas for Riemannian
volumes. In even dimensions, the link is provided by the Poincar\'e
relation, which expresses the volume of a polytope of nonzero constant
curvature in terms of an alternating sum of its internal angle sums;
see Section~\ref{subsec:poincare_relation}. For rectangular boxes,
we obtain a volume formula directly, valid in every dimension.

We record here the resulting formulas for the ideal regular simplex,
cube, and crosspolytope in hyperbolic space of curvature $-1$.

\begin{theorem}[Volumes of ideal regular polytopes]
\label{theo:intro_volumes_ideal_regular_polytopes}
The following assertions hold.

\begin{enumerate}
\item If $d\geq2$ is even, then the volume of the ideal regular
$d$-simplex is
\begin{equation}
\label{eq:intro_volume_ideal_simplex}
\Vol_{d,-1}\bigl(\Delta_\infty^d(-1)\bigr)
=
\frac{\omega_{d+1}}{2\ii^d\sqrt{2\pi}}
\int_{-\infty}^{\infty}
\left[
\Phi\left(\frac{\ii y}{\sqrt d}\right)^{d+1}
+
\Phi\left(-\frac{\ii y}{\sqrt d}\right)^{d+1}
\right]
\eee^{-y^2/2}\,\dd y .
\end{equation}

\item For every integer $d\geq2$, the volume of the ideal regular $d$-cube is
\begin{equation}
\label{eq:intro_volume_ideal_cube}
\Vol_{d,-1}\bigl(\Box_\infty^d(-1)\bigr)
=
\frac{\omega_{d+1}}{\ii^d\sqrt{2\pi}}
\int_0^\infty
\left(
2\Phi\left(\frac{\ii y}{\sqrt d}\right)-1
\right)^d
\eee^{-y^2/2}\,\dd y .
\end{equation}

\item If $d\geq2$ is even, then the volume of the ideal regular
$d$-crosspolytope is
\begin{equation}
\label{eq:intro_volume_ideal_crosspolytope}
\Vol_{d,-1}\bigl(\Diamond_\infty^d(-1)\bigr)
=
\frac{\omega_{d+1}}{\ii^d}
\left[
\frac12
-
\frac{2^d}{\pi}
\int_0^\infty
\frac1y
\Im\left[
\Phi\left(\frac{\ii y}{\sqrt{d-1}}\right)^d
\right]
\eee^{-y^2/2}\,\dd y
\right].
\end{equation}
\end{enumerate}
Here
$
\omega_{d+1}
=
\frac{2\pi^{(d+1)/2}}
{\Gamma((d+1)/2)}
$
is the $d$-dimensional surface area of the unit sphere
$\mathbb S^d\subseteq\R^{d+1}$. Although the right-hand sides involve complex quantities, they are real. In \eqref{eq:intro_volume_ideal_crosspolytope}, the integrand at
$y=0$ is understood by continuous extension.
\end{theorem}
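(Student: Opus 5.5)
Parts 1 and 3 will come from the Poincar\'e relation for even $d$, used in the form (hyperbolic space of curvature $-1$, ideal polytopes)
\[
\sum_{k=0}^{d}(-1)^k\sum_{F\in\mathcal F_k(P)}\beta_{-1}(F,P)=(-1)^{d/2}\frac{2\Vol_{d,-1}(P)}{\omega_{d+1}}.
\]
Faces of dimension $0$ contribute nothing for ideal polytopes, since vertices lie at infinity and are not points of $P$; I will check this convention in Section~\ref{subsec:poincare_relation}. Part 2 does not need even $d$; there I expect to use the direct volume formula for rectangular boxes valid in every dimension, passing to the limit $\ell\to\infty$. Since $\ii^d=(-1)^{d/2}$ for even $d$, the prefactor $\omega_{d+1}/\ii^d$ is consistent with this.

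\textbf{Simplex.} The simplex has $\binom{d+1}{k+1}$ faces of dimension $k$. By Example~\ref{exa:intro_ideal_simplex_cube}, $\beta=\E[\Phi(\ii\xi/\sqrt d)^{d-k}]$ for $k\ge1$, and the $d$-face has $\beta=1$. Set $j=d-k$ and $z=\Phi(\ii\xi/\sqrt d)$. The alternating sum over $k=1,\dots,d$ is then
\[
\E\Bigl[\sum_{j=0}^{d-1}(-1)^{d-j}\tbinom{d+1}{j}z^j\Bigr].
\]
Complete this to the full binomial sum $(-1)^d\E[(1-z)^{d+1}]$. The missing terms are $j=d$ and $j=d+1$, namely $-(-1)^d(d+1)\E[z^{d}]$ and $-(-1)^{d+1}\cdot(-1)^{d}\E[z^{d+1}]$ up to sign bookkeeping. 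Because $\xi$ is symmetric, $1-\Phi(\ii\xi/\sqrt d)=\Phi(-\ii\xi/\sqrt d)$, so $\E[(1-z)^{d+1}]=\E[z^{d+1}]$. This leaves $\E[z^{d+1}]$-type terms and one stray term $(d+1)\E[z^d]$. I need to show $(d+1)\E[z^d]$ equals a constant, using the vertex internal angle identity $\E[\Phi(\ii\xi/\sqrt d)^d]$ in Example~\ref{exa:intro_ideal_simplex_cube} extended to $k=0$. In the ideal limit $1+dC\to$ scaling, this gives $\beta=\E[\Phi(\ii\xi/\sqrt d)^d]$, and the vertex angle of an ideal vertex should be $0$. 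So $\E[z^d]=0$, which I would verify via the limiting formula \eqref{eq:intro_regular_simplex_internal_angle} at $k=0$. Writing $\E[z^{d+1}]$ symmetrically as $\tfrac12\E[z^{d+1}+\bar z^{\,d+1}]$ with $\bar z=\Phi(-\ii\xi/\sqrt d)$ then yields \eqref{eq:intro_volume_ideal_simplex}, after writing the expectation as an integral against $\eee^{-y^2/2}/\sqrt{2\pi}$.

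\textbf{Crosspolytope.} The crosspolytope has $2^{k+1}\binom{d}{k+1}$ faces of dimension $k$. I will use \eqref{eq:intro_regular_crosspolytope_internal_angle} with $C\to\infty$, which turns its argument into $\ii\xi/\sqrt{d-1}$. Put $w=\Phi(\ii\xi/\sqrt{d-1})$ and $j=d-k-1$. The terms involving $\tfrac12$ sum over $k$ to an explicit binomial constant. The imaginary parts sum to
\[
\Im\sum_j(-1)^{d-1-j}2^{d-j}\tbinom{d}{j}2^{j}w^j=\pm2^d\,\Im[(w-1)^d\text{-type}].
\]
Using $1-w=\Phi(-\ii\xi/\sqrt{d-1})=\bar w$ on the real line together with oddness in $\xi$, this collapses to $2^d\Im[w^d]$ plus boundary terms. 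Those boundary terms again vanish or combine with the vertex contribution, using the $k=0$ limit (vertex angle $0$). Folding the expectation to $y>0$ gives the factor $2/\sqrt{2\pi}\cdot1/\sqrt{2\pi}=1/\pi$ and produces \eqref{eq:intro_volume_ideal_crosspolytope}.

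\textbf{Main obstacle.} The delicate step is justifying the removal of the vertex terms and the boundary binomial terms. This requires knowing that the $k=0$ limits of the angle formulas vanish, i.e.\ identities such as $\E[\Phi(\ii\xi/\sqrt d)^d]=0$. I would try to prove these directly by reading them as the internal angle at an ideal vertex, which is a degenerate cone, or else by the Gram-type relation for the cone. The remainder is careful sign and binomial bookkeeping.
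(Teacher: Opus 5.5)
Your route differs from the paper's in how ideal vertices are handled. The paper never applies the Poincar\'e relation to a polytope with ideal vertices.

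For the simplex, it proves Theorem~\ref{theo:vol_orthocentric_simplex_acute_even_d} for simplices inside $\mathbb B^d(\kappa)$. There the vertex angles are genuine and given by the same $\mathbf g$-formula, so the alternating sum over all $J\neq\{0,\dots,d\}$ collapses exactly to $\prod\Phi(-\,\cdot)+\prod\Phi(\cdot)$. The ideal case then follows from $P_r=rP$, $r\uparrow1$, by monotone convergence of the volume and dominated convergence of the integrand.

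For the crosspolytope, the paper does not sum crosspolytope angles at all. It splits $P$ into $2^d$ rectangular simplices and uses Theorem~\ref{theo:volume_rectangular_simplex}.

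For the cube, Theorem~\ref{theo:vol_boxes} is already stated for $P\subseteq\bar{\mathbb B}^d(\kappa)$. So $\tau=1/\sqrt d$ and $\tau\sqrt{-1}=\ii/\sqrt d$ give part~2 with no limit in $\ell$.

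Your approach applies Poincar\'e directly to the ideal polytope with ideal vertices contributing $0$, plus the ideal angle formulas at faces of positive dimension. This is more direct, especially for the crosspolytope, and your bookkeeping, once completed, is right. For the simplex the leftover is $-(d+1)\E[z^d]$. For the crosspolytope the constants add up to $1-d$; the part $-d$ combines with the correction term for $j=d-1$ into $-2d$ times the formal ideal vertex angle. So everything hinges on
\[
\E\bigl[\Phi(\ii\xi/\sqrt d)^{d}\bigr]=0,
\qquad
\frac12-\frac{2^{d-1}}{\sqrt{2\pi}}\,\E\Bigl[\frac1\xi\,\Im\bigl[\Phi(\ii\xi/\sqrt{d-1})^{d-1}\bigr]\Bigr]=0,
\]
which you have not proved.

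These identities are true, but ``the vertex angle of an ideal vertex should be $0$'' is not an argument. The equation $\beta_\kappa(\{v\},P)=0$ is a convention, not a statement about the $k=0$ formula at $C=\infty$. You need two facts.

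\emph{First}, the $k=0$ formulas are continuous as $C\to\infty$. For the argument $\ii a\xi$ one has $|\Phi(\ii t)|\le c\,\eee^{t^2/2}/(1+|t|)$ and $da^2\le1$ (respectively $(d-1)a^2\le1$). Hence the integrands have the majorant $c'(1+|y|)^{-d}$, which is integrable because $d\ge2$.

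\emph{Second}, the true vertex angles tend to $0$. By Proposition~\ref{prop:cubes_simplices_isometric_local_cones} with $k=0$, the vertex tangent cone of $\Delta_\ell^d(\kappa)$ is $C_d(1/C;1,\dots,1;1,\dots,1)$, whose generators have pairwise cosines $C/(1+C)\to1$. By Theorem~\ref{theo:hyperbolic_angles_crosspolytopes}, the vertex tangent cone of the crosspolytope is $\Ccrosspoly_{d-1}$ with all parameters $\tau/s_1(\kappa)\to\infty$, so its generators tend to $e_0$. Either cone lies in a circular cone of vanishing half-angle, so its solid angle tends to $0$.

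This is the same dominated-convergence input the paper uses. Alternatively, run your computation at finite $\ell$, where the vertex terms complete the binomial sums exactly, and then let $\ell\to\infty$.

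Finally, in the simplex computation, ``$\E[(1-z)^{d+1}]=\E[z^{d+1}]$'' and ``$\E[z^{d+1}]=\frac12\E[z^{d+1}+\bar z^{\,d+1}]$'' are meaningless for $\kappa<0$. Both expectations diverge, since $|\Phi(\ii y/\sqrt d)|^{d+1}\eee^{-y^2/2}$ grows like $\eee^{y^2/(2d)}$ up to a power of $|y|$ (cf.\ the remark after Corollary~\ref{cor:angles_regular_simplices_constant_curvature}). Use instead the pointwise identity, valid for even $d$,
\[
\sum_{j=0}^{d-1}(-1)^{d-j}\binom{d+1}{j}z^j=(1-z)^{d+1}+z^{d+1}-(d+1)z^d .
\]
The left side and $z^d$ are integrable, hence so is $\Phi(-\ii\xi/\sqrt d)^{d+1}+\Phi(\ii\xi/\sqrt d)^{d+1}$. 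Take the expectation of this combination as a whole; this gives \eqref{eq:intro_volume_ideal_simplex} directly.

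In the crosspolytope case, splitting is harmless. Only imaginary parts enter, and $y^{-1}\Im[\Phi(\ii y/\sqrt{d-1})^m]\,\eee^{-y^2/2}=O(|y|^{-d})$ as $|y|\to\infty$ for $m\in\{d-1,d\}$.
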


More generally, the results of the paper provide volume formulas for
rectangular boxes in arbitrary dimension and, in even dimensions, for
acute orthocentric simplices, rectangular simplices, and asymmetric
crosspolytopes.

\begin{remark}[Related literature]
Volumes of regular hyperbolic simplices have been studied from several
points of view. Haagerup and Munkholm
\cite{haagerup_munkholm_simplices_max_vol_hyperbolic} proved that the
ideal regular hyperbolic simplex has maximal volume among hyperbolic
simplices. Marshall~\cite{Marshall1999Volumes} obtained high-dimensional
asymptotics for the volumes of regular spherical and hyperbolic
simplices. Kellerhals used regular hyperbolic simplices in deriving
lower volume bounds for hyperbolic manifolds
\cite{kellerhals_reg_simpl_vol_bounds} and studied the associated
simplicial density function in the context of ball packings in spaces
of constant curvature~\cite{kellerhals_ball_packing}. An explicit
formula for the volume of a regular hyperbolic simplex in arbitrary
dimension was obtained in~\cite{KabluchkoSchange2025Hyperbolic}.

For regular hyperbolic cubes, explicit volume formulas were obtained by
\citet{Marshall1998Cubes}; in particular,
\eqref{eq:intro_volume_ideal_cube} is equivalent to Marshall's formula
for the ideal cube. Related results were obtained independently by
Smith~\cite{Smith1988Thesis,Smith2000Simplexity}. Our results extend
Marshall's formula to rectangular boxes and also to the spherical
setting. Smith also studied volumes of regular ideal crosspolytopes and
obtained their high-dimensional asymptotics
\cite{Smith1988Thesis}. To the best of our knowledge,
formula~\eqref{eq:intro_volume_ideal_crosspolytope} is new.
\end{remark}

\vspace*{5mm}

\section{Background}
\subsection{Klein model}\label{subsec:klein_model}
The underlying set of the Klein model of the geometry with
constant curvature $\kappa\in\R$ is given by
\[
\mathbb B^d(\kappa) := \R^d
\quad (\text{for } \kappa \geq 0),
\qquad
\mathbb B^d(\kappa) := \left\{x\in\mathbb{R}^d:\|x\|<\frac{1}{\sqrt{-\kappa}}\right\}
\quad (\text{for } \kappa<0).
\]
We also need the Euclidean closure of $\mathbb B^d(\kappa)$, which is
given by
\[
\bar{\mathbb B}^d(\kappa) := \R^d
\quad (\text{for } \kappa \geq 0),
\qquad
\bar{\mathbb B}^d(\kappa) := \left\{x\in\mathbb{R}^d:\|x\|\leq \frac{1}{\sqrt{-\kappa}}\right\}
\quad (\text{for } \kappa<0).
\]
The tangent space at $x\in\mathbb B^d(\kappa)$ is canonically identified
with $\R^d$, that is,
$T_x\mathbb B^d(\kappa)\simeq\R^d$.
 In the Klein model, the Riemannian metric at \(x\in \mathbb B^d(\kappa)\) is given by
\begin{equation}\label{eq:def_riemann_metric_g_x}
g_x(u,v)
=
\frac{\langle u,v\rangle}{1+\kappa\|x\|^2}
-
\frac{\kappa\,\langle x,u\rangle\langle x,v\rangle}
{\left(1+\kappa\|x\|^2\right)^2},
\qquad
u,v\in T_x\mathbb B^d(\kappa)\simeq \mathbb{R}^d.
\end{equation}

For $\kappa<0$, the Riemannian manifold
$(\mathbb B^d(\kappa),g)$ is the usual Beltrami--Klein model
of the $d$-dimensional hyperbolic space of constant sectional
curvature $\kappa$. For $\kappa=0$, it is the Euclidean space
$\R^d$, whereas for $\kappa>0$ it is isometric to an open
hemisphere of the $d$-dimensional sphere of constant sectional
curvature $\kappa$, equivalently, of radius $1/\sqrt{\kappa}$.

Here and below, $\delta_{ij}$ denotes the Kronecker delta, and
$I_d=(\delta_{ij})_{i,j=1}^d$ denotes the $d\times d$ identity matrix.
In the standard coordinates, the components of the Riemannian metric are
\[
g_{ij}(x)
=
\frac{\delta_{ij}}{1+\kappa\|x\|^2}
-
\frac{\kappa\,x_i x_j}
{\left(1+\kappa\|x\|^2\right)^2},
\qquad i,j=1,\ldots,d.
\]
The matrix representing the Riemannian metric at \(x\in \mathbb B^d(\kappa)\) is
\[
G_x
=
(g_{ij}(x))_{i, j = 1}^d
=
\frac{1}{1+\kappa\|x\|^2}I_d
-
\frac{\kappa}{(1+\kappa\|x\|^2)^2}xx^{\mathsf T}.
\]
The Riemannian volume of a Borel set $B\subseteq \bar{\mathbb B}^d(\kappa)$ is given by
\begin{equation}\label{eq:riemannian_volume_klein_model}
\Vol_{d,\kappa}(B)  = \int_{B\cap  \mathbb B^d(\kappa)} \frac{\dd y_1 \cdots \dd y_d}{(1+\kappa \| y \|^2)^{(d+1)/2}}\in [0,+\infty].
\end{equation}
The geodesic distance \(\rho_\kappa(x,y)\) between any two points \(x,y\in\mathbb B^d(\kappa)\) is given by
\begin{equation}\label{eq:geodesic_distance_in_B_d_kappa}
\rho_\kappa(x,y)
=
\begin{cases}
\frac{1}{\sqrt{\kappa}}\,
\arccos\!\left(
\frac{1+\kappa\langle x,y\rangle}
{\sqrt{\bigl(1+\kappa\|x\|^2\bigr)
       \bigl(1+\kappa\|y\|^2\bigr)}}
\right),
& \kappa>0,\\
\|x-y\|,
& \kappa=0,\\
\frac{1}{\sqrt{-\kappa}}\,
\operatorname{arcosh}\!\left(
\frac{1+\kappa\langle x,y\rangle}
{\sqrt{\bigl(1+\kappa\|x\|^2\bigr)
       \bigl(1+\kappa\|y\|^2\bigr)}}
\right),
& \kappa<0.
\end{cases}
\end{equation}
For background on hyperbolic geometry and, in particular, on the
Klein model of hyperbolic space, we refer to
Ratcliffe~\cite[Section~6.1]{ratcliffe_book}.

\subsection{Inverse of the metric matrix}

We need to compute the inverse of the matrix $G_x$. This will be done
using the next lemma, which is a special case of the Sherman--Morrison
formula.

\begin{lemma}[Sherman--Morrison]\label{lem:sherman_morrison}
Let $x\in\R^d$ and let $\alpha\in\R$. Suppose that $1+\alpha\|x\|^2\neq 0$.
Then the matrix $I_d+\alpha xx^{\mathsf T}$ is invertible and
\[
\left(I_d+\alpha xx^{\mathsf T}\right)^{-1}
=
I_d-\frac{\alpha}{1+\alpha\|x\|^2}xx^{\mathsf T}.
\]
\end{lemma}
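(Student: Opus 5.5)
The plan is to verify the claimed inverse directly by multiplication. Set $M:=I_d+\alpha xx^{\mathsf T}$ and $N:=I_d-\frac{\alpha}{1+\alpha\|x\|^2}xx^{\mathsf T}$; the denominator is nonzero by hypothesis. Since $M$ and $N$ are square matrices, showing $MN=I_d$ proves at once that $M$ is invertible and that $M^{-1}=N$.

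When expanding $MN$, the only identity needed is
\[
(xx^{\mathsf T})(xx^{\mathsf T})=x(x^{\mathsf T}x)x^{\mathsf T}=\|x\|^2\,xx^{\mathsf T},
\]
which reduces every product to a scalar multiple of $I_d$ or of $xx^{\mathsf T}$. The expansion gives
\[
MN=I_d+\Bigl(\alpha-\frac{\alpha}{1+\alpha\|x\|^2}-\frac{\alpha^2\|x\|^2}{1+\alpha\|x\|^2}\Bigr)xx^{\mathsf T}.
\]
The coefficient of $xx^{\mathsf T}$ equals
\[
\alpha-\frac{\alpha\bigl(1+\alpha\|x\|^2\bigr)}{1+\alpha\|x\|^2}=0,
\]
so $MN=I_d$.

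As a sanity check, one can see why $1+\alpha\|x\|^2$ is the relevant quantity. The eigenvalues of $M$ are $1$ on $x^\perp$ (with multiplicity $d-1$) and $1+\alpha\|x\|^2$ on $\operatorname{span}\{x\}$ when $x\neq0$. Hence $\det M=1+\alpha\|x\|^2$, which confirms that the hypothesis is exactly the invertibility condition; if $x=0$, both $M$ and $N$ equal $I_d$.

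This is a routine computation, and no step is a genuine obstacle. The only point needing care is the bookkeeping of the scalar $\|x\|^2$ arising from $x^{\mathsf T}x$.
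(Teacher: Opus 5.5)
Your proof is correct and essentially the paper's: both expand the product with the identity $(xx^{\mathsf T})(xx^{\mathsf T})=\|x\|^2xx^{\mathsf T}$. The only difference is that the paper takes the ansatz $I_d+\beta xx^{\mathsf T}$ and solves for $\beta$, whereas you verify the given coefficient directly; your determinant remark adds that the hypothesis $1+\alpha\|x\|^2\neq0$ is also necessary for invertibility.
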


\begin{proof}
We look for an inverse of the form
$
I_d+\beta xx^{\mathsf T}
$
for a suitable scalar $\beta\in\R$. Since
$
xx^{\mathsf T}xx^{\mathsf T}
=
x(x^{\mathsf T}x)x^{\mathsf T}
=
\|x\|^2xx^{\mathsf T},
$
we obtain
\begin{align*}
\left(I_d+\alpha xx^{\mathsf T}\right)
\left(I_d+\beta xx^{\mathsf T}\right)
=
I_d+(\alpha+\beta)xx^{\mathsf T}
+\alpha\beta\,xx^{\mathsf T}xx^{\mathsf T}
=
I_d+
\left(\alpha+\beta+\alpha\beta\|x\|^2\right)
xx^{\mathsf T}.
\end{align*}
Thus, $\beta$ must satisfy
$
\alpha+\beta+\alpha\beta\|x\|^2=0
$.
Since $1+\alpha\|x\|^2\neq0$, we may choose
$
\beta
=
-\frac{\alpha}{1+\alpha\|x\|^2}
$.
This proves the claim.
\end{proof}

\begin{lemma}[Inverse metric matrix]
\label{lem:inverse_metric_matrix_Klein_model}
Let $x\in\mathbb B^d(\kappa)$, where $\kappa\in\R$, and let $G_x$
denote the matrix of the Riemannian metric at $x$. Then
\[
G_x^{-1}
=
\bigl(1+\kappa\|x\|^2\bigr)
\left(I_d+\kappa xx^{\mathsf T}\right).
\]
\end{lemma}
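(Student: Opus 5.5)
We write $G_x = a\,(I_d + \alpha xx^{\mathsf T})$ with $a := (1+\kappa\|x\|^2)^{-1}$ and $\alpha := -\kappa\,(1+\kappa\|x\|^2)^{-1}$. This factorisation is legitimate because $1+\kappa\|x\|^2>0$ for every $x\in\mathbb B^d(\kappa)$. If $\kappa\ge0$ this is clear. If $\kappa<0$ it follows from $\|x\|^2<1/(-\kappa)$. Indeed, factoring out $a$ leaves $I_d - \frac{\kappa}{1+\kappa\|x\|^2}xx^{\mathsf T}$, which is exactly $I_d+\alpha xx^{\mathsf T}$.

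Next we apply Lemma~\ref{lem:sherman_morrison} to $I_d+\alpha xx^{\mathsf T}$. First we check its hypothesis:
\[
1+\alpha\|x\|^2 = 1-\frac{\kappa\|x\|^2}{1+\kappa\|x\|^2} = \frac{1}{1+\kappa\|x\|^2}\neq 0 .
\]
The lemma then gives
\[
(I_d+\alpha xx^{\mathsf T})^{-1} = I_d - \frac{\alpha}{1+\alpha\|x\|^2}\,xx^{\mathsf T}.
\]
The coefficient simplifies to
\[
-\frac{\alpha}{1+\alpha\|x\|^2} = \frac{\kappa}{1+\kappa\|x\|^2}\cdot(1+\kappa\|x\|^2) = \kappa .
\]
Hence $(I_d+\alpha xx^{\mathsf T})^{-1} = I_d+\kappa xx^{\mathsf T}$.

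Finally, $G_x^{-1} = a^{-1}(I_d+\alpha xx^{\mathsf T})^{-1} = (1+\kappa\|x\|^2)(I_d+\kappa xx^{\mathsf T})$, which is the claim.

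As an independent sanity check, one can multiply $G_x$ directly by the claimed inverse, using $xx^{\mathsf T}xx^{\mathsf T}=\|x\|^2xx^{\mathsf T}$; the $xx^{\mathsf T}$ terms cancel. The only points needing any care are the positivity of $1+\kappa\|x\|^2$ on $\mathbb B^d(\kappa)$ and the algebraic simplification of the coefficient. Neither is a real obstacle.
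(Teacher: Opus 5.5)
Your proposal is correct and follows the paper's proof essentially verbatim. It uses the same factorisation $G_x=(1+\kappa\|x\|^2)^{-1}(I_d+\alpha xx^{\mathsf T})$ with $\alpha=-\kappa/(1+\kappa\|x\|^2)$, the same check $1+\alpha\|x\|^2=(1+\kappa\|x\|^2)^{-1}\neq 0$, and the same application of Lemma~\ref{lem:sherman_morrison}; your explicit justification of $1+\kappa\|x\|^2>0$ in the case $\kappa<0$ spells out what the paper only asserts.
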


\begin{proof}
Since $x\in\mathbb B^d(\kappa)$, we have
$
1+\kappa\|x\|^2>0
$.
We factor $G_x$ as
\[
G_x
=
\frac{1}{1+\kappa\|x\|^2}
\left(
I_d
-
\frac{\kappa}{1+\kappa\|x\|^2}xx^{\mathsf T}
\right).
\]
Set
$
\alpha
:=
-\frac{\kappa}{1+\kappa\|x\|^2}
$.
Then
$
1+\alpha\|x\|^2
=
1-\frac{\kappa\|x\|^2}{1+\kappa\|x\|^2}
=
\frac{1}{1+\kappa\|x\|^2}
>0
$.
The Sherman--Morrison formula, Lemma~\ref{lem:sherman_morrison},  therefore gives
\begin{align*}
\left(
I_d
-
\frac{\kappa}{1+\kappa\|x\|^2}xx^{\mathsf T}
\right)^{-1}
=
\left(I_d+\alpha xx^{\mathsf T}\right)^{-1}
=
I_d-\frac{\alpha}{1+\alpha\|x\|^2}xx^{\mathsf T}
=
I_d+\kappa xx^{\mathsf T}.
\end{align*}
Multiplying by $1+\kappa\|x\|^2$ yields the claimed formula for
$G_x^{-1}$.
\end{proof}

\subsection{Tangent and normal cones}\label{subsec:tangent_normal_cones}
Fix a curvature $\kappa\in\R$ and a dimension $d\geq 1$. By a polytope in the Klein model
we mean the Euclidean convex hull of finitely many points in
$\bar{\mathbb B}^d(\kappa)$. All notions of convexity, affine hull,
relative interior, dimension, and face used below are understood in
the Euclidean sense. This usage is
compatible with the geometry of the Klein model: geodesic segments are
represented by Euclidean line segments, and totally geodesic subspaces
are intersections of $\mathbb B^d(\kappa)$ with Euclidean affine
subspaces.

Let \(P\subseteq \bar{\mathbb  B}^d(\kappa)\) be a full-dimensional polytope.  As a Euclidean polytope, $P$ admits an H-representation
\begin{equation}\label{eq:polytope_H_representation}
P
=
\left\{
y\in \R^d:
\langle a_r,y\rangle \le b_r,\ r=1,\ldots,m
\right\},
\end{equation}
where \(a_r\in\mathbb{R}^d\setminus\{0\}\) and \(b_r\in\mathbb{R}\).
We assume that this H-representation is irredundant, meaning that no
inequality can be omitted without changing $P$. Since $P$ is
full-dimensional, the inequalities then define pairwise distinct
facets of $P$,
\[
F_r
:=
P\cap \{y\in \R^d:\langle a_r,y\rangle=b_r\},
\qquad
r=1,\ldots, m.
\]
(A facet of a $d$-dimensional polytope is a face of dimension $d-1$.) Note that $a_r$ is a Euclidean outward normal vector to $F_r$.

Now let  \(F\) be a face of \(P\) with $F\neq P$.  (By convention, faces are nonempty.)   Then $F$ can be represented as an intersection of all facets containing $F$, that is $F= \bigcap_{r\in I(F)} F_r$, where \(I(F)\) is the set of indices \(r\) such that the corresponding
facet \(F_r\) contains \(F\), i.e.
\[
I(F)
:=
\{r\in\{1,\ldots,m\}:F\subseteq F_r\}
=
\left\{
r\in\{1,\ldots,m\}:
\langle a_r,y\rangle=b_r
\text{ for every } y\in F
\right\}.
\]

We now define the Riemannian tangent and normal cones of $P$ along a
face $F$, based at a point
$x\in\operatorname{relint}(F)\cap\mathbb B^d(\kappa)$.
\begin{definition}[Tangent and normal cones]
Let \(P\subseteq \bar{\mathbb  B}^d(\kappa)\) be a full-dimensional polytope with irredundant representation~\eqref{eq:polytope_H_representation}, and let $F\neq P$ be a face of $P$ such that $\operatorname{relint}(F)\cap\mathbb B^d(\kappa)\neq\varnothing$.
For \(x\in \relint(F) \cap \mathbb B^d(\kappa)\),  the \emph{tangent cone} of \(P\) at \(x\) is
\begin{equation}\label{eq:tangent_cone_def_Klein_model}
T_x(F,P)
=
\left\{
u\in\mathbb{R}^d:
\langle a_r,u\rangle \le 0
\text{ for all } r\in I(F)
\right\} \subseteq T_x \mathbb  B^d(\kappa)\simeq \mathbb{R}^d.
\end{equation}
The \emph{normal cone} of \(P\) at \(x\) is defined as the polar cone of $T_x(F,P)$ with respect to the inner product $g_x$, i.e.,
\begin{equation}\label{eq:normal_cone_def_Klein_model}
N_x(F, P)
:=
\left\{
v\in \R^d:
g_x(v,u)\le 0
\text{ for all }u\in T_x(F,P)
\right\} \subseteq T_x \mathbb  B^d(\kappa)\simeq \mathbb{R}^d.
\end{equation}
\end{definition}

For $\kappa\geq0$, the condition
$
\operatorname{relint}(F)\cap\mathbb B^d(\kappa)\neq\varnothing
$
is automatic. For $\kappa<0$, it fails precisely when $F$ is an ideal
vertex, that is, a vertex of $P$ lying on
$\partial\mathbb B^d(\kappa)$.

Note that, as a subset of $\R^d$, $T_x(F,P)$ coincides with the usual
Euclidean tangent cone of $P$ at $x$. Since
$x\in\operatorname{relint}(F)$, this cone depends only on the face $F$
and not on the choice of $x$. We regard both $T_x(F,P)$ and
$N_x(F,P)$ as cones in the ambient inner product space
$
\bigl(T_x\mathbb B^d(\kappa),g_x\bigr)
\simeq
\bigl(\R^d,g_x\bigr).
$
In particular, the polarity in~\eqref{eq:normal_cone_def_Klein_model}
is taken with respect to $g_x$. Therefore, as a subset of $\R^d$, the
normal cone $N_x(F,P)$ may depend on $x$ and $\kappa$. The next
proposition shows that, for fixed $\kappa$, its \emph{isometry type} does not
depend on the choice of
$x\in\operatorname{relint}(F)\cap\mathbb B^d(\kappa)$. In general,
however, its isometry type may depend on $\kappa$.

The next proposition will be our main tool for identifying the isometry
types of the normal and tangent cones. For vectors $v_1,\ldots,v_k\in\R^d$, we write
\[
\pos\{v_1,\ldots,v_k\}
:=
\left\{
\sum_{j=1}^k\lambda_jv_j:
\lambda_1,\ldots,\lambda_k\geq0
\right\}
\]
for their positive hull.

\begin{proposition}[Representations of the normal cone]\label{prop:normal_cone_in_constant_curvature}
Let $\kappa\in \R$ and $d\geq 1$. Let $P\subseteq\bar{\mathbb B}^d(\kappa)$ be a full-dimensional polytope
given by an irredundant H-representation
\eqref{eq:polytope_H_representation}, and let $F\neq P$ be a face of
$P$ such that $\operatorname{relint}(F)\cap\mathbb B^d(\kappa)\neq\varnothing$.
Then the following assertions hold for every $x\in\operatorname{relint}(F)\cap\mathbb B^d(\kappa)$.
\begin{itemize}
\item[(a)] One has $N_x(F, P)
=
\pos\{n_r(x):r\in I(F)\}$,
where
\begin{equation}\label{eq:riemann_outward_normal}
n_r(x)
:=
G_x^{-1}a_r
=
\bigl(1+\kappa\|x\|^2\bigr)
\left(a_r+\kappa\langle a_r,x\rangle x\right), \qquad r\in I(F).
\end{equation}
Moreover, for every $r\in I(F)$, the vector
$n_r(x)\in T_x\mathbb B^d(\kappa)$ is uniquely characterized by
\begin{equation}\label{eq:riemann_outward_normal_duality}
g_x\bigl(u,n_r(x)\bigr)
=
\langle a_r,u\rangle
\qquad
\text{ for all }
u\in T_x\mathbb B^d(\kappa).
\end{equation}
Finally, the linear hull of $N_x(F,P)$ has dimension $d-\dim F$.
\item[(b)] One has
$
N_x(F, P)
=
\pos\{\tilde n_r(x):r\in I(F)\},
$
where
$$
\tilde n_r(x)
:=
\bigl(1+\kappa\|x\|^2\bigr)^{-1/2} n_r(x)
=
\bigl(1+\kappa\|x\|^2\bigr)^{1/2}
\left(a_r+\kappa\langle a_r,x\rangle x\right),
\qquad r\in I(F).
$$
\item[(c)] The entries of the Gram matrix of the generator system $\{\tilde n_r(x):r\in I(F)\}$ are given by
$$
g_x(\tilde n_r(x), \tilde n_s(x))
=
\langle a_r,a_s\rangle+\kappa b_rb_s,
\qquad
r,s\in I(F).
$$
\item[(d)]
For every \(y\in \operatorname{relint}(F)\cap \mathbb B^d(\kappa)\) there is a linear isometry $\Phi_{x,y}: (T_x \mathbb  B^d(\kappa), g_x) \to (T_y \mathbb  B^d(\kappa), g_y)$ such that
\[
\Phi_{x,y}\bigl(\tilde n_r(x)\bigr)
=
\tilde n_r(y),
\qquad
r\in I(F).
\]
Consequently, $\Phi_{x,y}\bigl(N_x(F,P)\bigr) =N_y(F,P)$
and
$\Phi_{x,y}\bigl(T_x(F,P)\bigr) =T_y(F,P)$. In particular, for fixed $\kappa$, the isometry types of the normal
and tangent cones do not depend on the choice of
$x\in\operatorname{relint}(F)\cap\mathbb B^d(\kappa)$.
\end{itemize}
\end{proposition}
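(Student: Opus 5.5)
Part (a) follows from standard cone duality once $g_x$ is transported to the Euclidean inner product. By definition, $N_x(F,P)$ is the $g_x$-polar of $T_x(F,P)=\{u:\langle a_r,u\rangle\le 0,\ r\in I(F)\}$. Set $n_r(x):=G_x^{-1}a_r$. Then $g_x(u,n_r(x))=u^{\mathsf T}G_xG_x^{-1}a_r=\langle a_r,u\rangle$, which is \eqref{eq:riemann_outward_normal_duality}. Uniqueness holds because $g_x$ is nondegenerate. Lemma~\ref{lem:inverse_metric_matrix_Klein_model} gives the explicit expression \eqref{eq:riemann_outward_normal}. In this notation,
\[
T_x(F,P)=\{u: g_x(u,n_r(x))\le0,\ r\in I(F)\}.
\]
Farkas' lemma (the bipolar theorem for finitely generated cones, applied in the inner product space $(\R^d,g_x)$) then shows that the $g_x$-polar of $T_x(F,P)$ is $\pos\{n_r(x):r\in I(F)\}$. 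For the dimension claim, the linear hull of $N_x(F,P)$ is $G_x^{-1}\lin\{a_r:r\in I(F)\}$. The span of the facet normals at $F$ is the Euclidean orthogonal complement of the direction space of $\operatorname{aff}F$, so it has dimension $d-\dim F$, and $G_x^{-1}$ is invertible. Part (b) is immediate, since $1+\kappa\|x\|^2>0$ and rescaling the generators by a positive factor does not change the positive hull.

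For part (c) I compute directly. Using (a),
\[
g_x(n_r,n_s)=\langle a_r,G_x^{-1}a_s\rangle=(1+\kappa\|x\|^2)\bigl(\langle a_r,a_s\rangle+\kappa\langle a_r,x\rangle\langle a_s,x\rangle\bigr).
\]
Since $x\in F\subseteq F_r\cap F_s$, we have $\langle a_r,x\rangle=b_r$ and $\langle a_s,x\rangle=b_s$. Dividing by $1+\kappa\|x\|^2$ gives $\langle a_r,a_s\rangle+\kappa b_rb_s$. This does not depend on $x$, and that independence is the key fact used next.

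For part (d), I define $\Phi_{x,y}$ on $L_x:=\lin\{\tilde n_r(x)\}$ by sending $\sum c_r\tilde n_r(x)$ to $\sum c_r\tilde n_r(y)$. By (c) the two Gram matrices coincide. Hence $\|\sum c_r\tilde n_r(x)\|_{g_x}=\|\sum c_r\tilde n_r(y)\|_{g_y}$, so the map is well defined (a vanishing combination is sent to a vanishing combination) and is an isometry $L_x\to L_y$. Both $L_x$ and $L_y$ have dimension $d-\dim F$ by (a). The orthogonal complements $L_x^{\perp_{g_x}}$ and $L_y^{\perp_{g_y}}$ therefore have equal dimension, and I extend by any linear isometry between them. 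This yields a linear isometry of the whole tangent spaces with $\Phi_{x,y}(\tilde n_r(x))=\tilde n_r(y)$, and therefore $\Phi_{x,y}(N_x)=N_y$ by (b).

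Finally, $T_x(F,P)$ is the $g_x$-polar of $N_x(F,P)$, by biduality for closed convex cones. An isometry carries polars to polars, so $\Phi_{x,y}(T_x)=T_y$. The only delicate points are the well-definedness of $\Phi_{x,y}$ when the generators are linearly dependent, which the equal Gram matrices handle, and the dimension count needed for the extension. I expect (d) to be the main, though mild, obstacle.
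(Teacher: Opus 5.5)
Your proposal is correct and follows the paper's proof essentially step by step. You use $n_r(x)=G_x^{-1}a_r$ with the polyhedral polarity formula for (a), positive rescaling for (b), and the direct computation with $\langle a_r,x\rangle=b_r$ for (c); for (d) you build the equal-Gram-matrix isometry, extend it to the orthogonal complements, and pass to polars — and your remark on the well-definedness of $\Phi_{x,y}$ when the generators are linearly dependent is a small clarification the paper leaves implicit.
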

\begin{remark}
The vectors $n_r(x)$ and $\tilde n_r(x)$ are outward Riemannian normal vectors at $x$ to the
geodesic hyperplane
$
\mathbb B^d(\kappa)
\cap
\left\{
y\in\R^d:
\langle a_r,y\rangle=b_r
\right\}
$
supporting the facet $F_r$. This follows from~\eqref{eq:riemann_outward_normal_duality}. In general, they need not be unit normal vectors.
\end{remark}
\begin{remark}[Curvature as a rank-one perturbation of the Gram matrix]
\label{rem:curvature_rank_one_perturbation}
Part~(c) of Proposition~\ref{prop:normal_cone_in_constant_curvature} is one
of the basic observations used throughout the paper. It states that the Gram matrix of the generators of the Riemannian normal cone at $F$ is
\[
\bigl(
g_x(\widetilde n_r(x),\widetilde n_s(x))
\bigr)_{r,s\in I(F)}
=
\bigl(
\langle a_r,a_s\rangle
\bigr)_{r,s\in I(F)}
+
\kappa\,
(b_rb_s)_{r,s\in I(F)}.
\]
Thus, in Klein coordinates, curvature modifies the Gram
matrix $(
\langle a_r,a_s\rangle)_{r,s\in I(F)}$ of the Euclidean active facet normals $a_r$, $r\in I(F)$, by the rank-at-most-one perturbation $\kappa\,
(b_rb_s)_{r,s\in I(F)}$.
Consequently, the Riemannian normal cone at $F$ is isometric to a
Euclidean cone whose generators have an explicit Gram matrix, while
the tangent cone is isometric to its Euclidean polar. Thus, the
computation of the corresponding Riemannian internal and external angles reduces
to the computation of the solid angles of these Euclidean cones.
\end{remark}

\begin{proof}[Proof of Proposition~\ref{prop:normal_cone_in_constant_curvature}]
Let \(x\in \operatorname{relint}(F)\cap \mathbb B^d(\kappa)\).
\emph{Proof of~(a).}
For \(r\in I(F)\), put $n_r(x):=G_x^{-1}a_r$. The explicit inverse
metric formula given in
Lemma~\ref{lem:inverse_metric_matrix_Klein_model} gives
$
n_r(x)
=
(1+\kappa\|x\|^2)
(a_r+\kappa\langle a_r,x\rangle x)
$.
Since $g_x(u,v)=\langle u,G_xv\rangle$, we have
\[
g_x(u,n_r(x))=\langle a_r,u\rangle
\qquad
\text{for all }u\in T_x\mathbb B^d(\kappa)\simeq\R^d.
\]
Using~\eqref{eq:tangent_cone_def_Klein_model}, the tangent cone of
\(P\) at \(x\) can therefore be written as
\[
T_x(F,P)
=
\left\{
u\in\R^d:
g_x(u,n_r(x))\leq 0
\text{ for all }r\in I(F)
\right\}.
\]
By the standard polarity formula for polyhedral cones, it follows that
\[
N_x(F,P)
=
\pos\{n_r(x):r\in I(F)\}
=
\pos\{G_x^{-1}a_r:r\in I(F)\}.
\]

It remains to prove that the dimension of the linear hull of
$N_x(F,P)$ equals $d-\dim F$. The linear hull of $N_x(F,P)$ is the
linear space generated by the vectors $G_x^{-1}a_r$, $r\in I(F)$.
Since $G_x^{-1}$ is nonsingular, this space has the same dimension as
the linear space generated by the vectors $a_r$, $r\in I(F)$. The
orthogonal complement of the latter space with respect to the
Euclidean scalar product is
$
\{
u\in\R^d:
\langle a_r,u\rangle=0
\text{ for all }r\in I(F)
\},
$
which is the linear hull of $F-x$. Hence, this orthogonal complement
has dimension $\dim F$, and the claim follows.

\vspace*{2mm}
\noindent
\emph{Proof of~(b).}
Since $x\in\mathbb B^d(\kappa)$, we have
$
1+\kappa\|x\|^2>0
$.
Since multiplication of each generator by a positive scalar does not change the positive
hull, part~(b) follows from part~(a).

\vspace*{2mm}
\noindent
\emph{Proof of~(c).}
For \(r,s\in I(F)\),
$$
g_x(n_r(x),n_s(x))
=
g_x(G_x^{-1} a_r, G_x^{-1} a_s)
=
\langle G_x^{-1} a_r, G_x G_x^{-1} a_s\rangle
=
\langle  n_r(x), a_s\rangle.
$$
Using formula~\eqref{eq:riemann_outward_normal} for $n_r(x)$
we get
$$
g_x(n_r(x),n_s(x))
=
\bigl(1+\kappa\|x\|^2\bigr)
\left(
\langle a_r,a_s\rangle+\kappa\langle a_r,x\rangle
\langle a_s,x\rangle
\right)
=
\bigl(1+\kappa\|x\|^2\bigr)
\left(
\langle a_r,a_s\rangle+\kappa b_rb_s
\right),
$$
because $x\in F$ and hence $\langle a_r,x\rangle=b_r$ for every \(r\in I(F)\). In view of $\tilde n_r(x)
=
(1+\kappa\|x\|^2)^{-1/2} n_r(x)$, this gives
$$
g_x(\tilde n_r(x),\tilde n_s(x))
=
\langle a_r,a_s\rangle+\kappa b_rb_s,
$$
which is precisely part~(c). Note that the right-hand side does not depend on $x$.

\vspace*{2mm}
\noindent
\emph{Proof of~(d).}
Let $x,y\in\operatorname{relint}(F)\cap\mathbb B^d(\kappa)$.
By part~(c), the systems
$\{\tilde n_r(x):r\in I(F)\}$ and
$\{\tilde n_r(y):r\in I(F)\}$ have the same Gram matrix. Hence, the
assignment
$
\tilde n_r(x)\mapsto\tilde n_r(y)$, $r\in I(F)$,
extends to a linear isometry between the linear spaces generated by
these systems. Extending this isometry to their orthogonal complements,
we obtain a linear isometry
$
\Phi_{x,y}:
(T_x\mathbb B^d(\kappa),g_x)
\longrightarrow
(T_y\mathbb B^d(\kappa),g_y)
$
such that
$
\Phi_{x,y}\bigl(\tilde n_r(x)\bigr)
=
\tilde n_r(y)
$
for all
$r\in I(F)$.
Taking positive hulls yields
$
\Phi_{x,y}(N_x(F,P))=N_y(F,P).
$
Since linear isometries preserve polar cones, it also follows that
$
\Phi_{x,y}(T_x(F,P))=T_y(F,P).
$
\end{proof}

\subsection{Solid angles}\label{subsec:solid_angles}
Let $C\subseteq\R^d$ be a polyhedral cone, let $L$ be its linear hull,
and put $\ell:=\dim L$. The \emph{Euclidean solid angle} of $C$ is defined by
\[
\alpha(C)
:=
\frac{
\Vol_{\ell}\bigl(C\cap\mathbb B^d\bigr)
}{
\Vol_{\ell}\bigl(L\cap\mathbb B^d\bigr)
},
\]
where $\Vol_{\ell}$ denotes the $\ell$-dimensional Euclidean volume
on $L$, and $\mathbb B^d:=\{u\in\R^d:\|u\|\leq1\}$ is the Euclidean unit ball. For $\ell=0$, we use the convention
$\Vol_0(\{0\})=1$, so that $\alpha(\{0\})=1$.

More generally, let $(V,g)$ be a $d$-dimensional real inner product space, and let
$C\subseteq V$ be a polyhedral cone. Choose a linear isometry $\Psi: (V,g)\to (\R^d,\langle\cdot,\cdot\rangle)$.
The \emph{solid angle of $C$ with respect to $g$} is defined by
\[
\alpha(C;g)
:=
\alpha\bigl(\Psi(C)\bigr).
\]
This definition does not depend on the choice of $\Psi$, since any two
such isometries differ by composition with an orthogonal transformation
of $\R^d$, and orthogonal transformations preserve Euclidean solid
angles.

We are now able to define internal and external angles of polytopes in $\bar{\mathbb B}^d(\kappa)$. For $x\in\mathbb B^d(\kappa)$ and a polyhedral cone
$C\subseteq T_x\mathbb B^d(\kappa)$, we write
\[
\alpha_x(C)
:=
\alpha(C;g_x)
\]
for the solid angle of $C$ with respect to the Riemannian inner product
$g_x$.
Now let $P\subseteq\bar{\mathbb B}^d(\kappa)$ be a full-dimensional polytope, and let $F\neq P$ be a face of $P$ such that $\operatorname{relint}(F)\cap\mathbb B^d(\kappa)\neq\varnothing$.
Choose
$x\in\operatorname{relint}(F)\cap\mathbb B^d(\kappa)$.
The \emph{internal angle} of $P$ at $F$ is defined by
\[
\beta_\kappa(F,P)
:=
\alpha_x\bigl(T_x(F,P)\bigr) = \alpha\bigl(T_x(F,P); g_x\bigr).
\]
The \emph{external angle} of $P$ at $F$ is defined by
\[
\gamma_\kappa(F,P)
:=
\alpha_x\bigl(N_x(F,P)\bigr) = \alpha\bigl(N_x(F,P); g_x\bigr).
\]
These definitions do not depend on the choice of $x$. Indeed, for any
$x,y\in\operatorname{relint}(F)\cap\mathbb B^d(\kappa)$, part~(d) of
Proposition~\ref{prop:normal_cone_in_constant_curvature} provides
a linear isometry between  $(T_x \mathbb  B^d(\kappa), g_x)$ and $(T_y \mathbb  B^d(\kappa), g_y)$ that maps $T_x(F,P)$ onto $T_y(F,P)$ and
$N_x(F,P)$ onto $N_y(F,P)$, and solid angles are
invariant under linear isometries.

These definitions apply to every face $F\neq P$ except an ideal vertex. We extend the definition of the internal angle by setting
$
\beta_\kappa(P,P):=1.
$
Moreover, if $d\geq2$, $\kappa<0$, and $v$ is an ideal vertex of $P$,
we set
\[
\beta_\kappa(\{v\},P):=0.
\]

\subsection{Poincar\'e relation for angle sums}\label{subsec:poincare_relation}

The next result gives an alternating relation among the internal angle
sums of a polytope. In even dimensions, it relates this alternating sum
to the Riemannian volume; if, in addition, $\kappa\neq0$, it yields an
expression for the volume in terms of the angle sums.

\begin{theorem}[Poincar\'e relation]
\label{theo:poincare_relation_curvature_kappa}
Let $d\geq2$ and $\kappa\in\R$, and let
$P\subseteq\bar{\mathbb B}^{d}(\kappa)$ be a full-dimensional
polytope. For $j=0,\ldots,d$, let $\mathcal F_j(P)$ denote the set of
$j$-dimensional faces of $P$, and define the $j$th internal angle sum
of $P$ by
\[
\sigma_{j,\kappa}(P)
:=
\sum_{F\in\mathcal F_j(P)}
\beta_\kappa(F,P).
\]
In particular, $\sigma_{d,\kappa}(P)=1$, and ideal vertices contribute
zero to $\sigma_{0,\kappa}(P)$. Then
\[
\sum_{j=0}^d(-1)^j\sigma_{j,\kappa}(P)
=
\begin{cases}
\displaystyle
\frac{2\kappa^{d/2}}{\omega_{d+1}}
\Vol_{d,\kappa}(P),
& \text{if $d$ is even},\\
0,
& \text{if $d$ is odd},
\end{cases}
\]
where
$
\omega_{d+1}
=
\frac{2\pi^{(d+1)/2}}
{\Gamma((d+1)/2)}
$
is the $d$-dimensional surface area of the unit sphere
$\mathbb S^d\subseteq\R^{d+1}$.
\end{theorem}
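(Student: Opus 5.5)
This is the classical Poincaré relation (Gauss--Bonnet for polytopes). The plan is to reduce to the spherical case and extend to all curvatures by analytic continuation in $\kappa$, treating ideal vertices by a limiting argument.

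\emph{Step 1: the case $\kappa>0$.} By scaling, take $\kappa=1$. The Klein model is then the gnomonic projection of the open upper hemisphere of $\mathbb S^d\subseteq\R^{d+1}$, via $y\mapsto (y,1)/\sqrt{1+\|y\|^2}$. Under it $P$ corresponds to $K\cap\mathbb S^d$, where $K=\pos\{(y,1):y\in P\}$ is a pointed polyhedral cone in $\R^{d+1}$. Faces $F$ of $P$ correspond to faces $\hat F$ of $K$ of dimension one higher. The Riemannian tangent cone of $P$ at $F$ is isometric to the quotient tangent cone of $K$ at $\hat F$, so $\beta_1(F,P)$ equals the Euclidean internal angle $\beta(\hat F,K)$. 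Also $\Vol_{d,1}(P)/\omega_{d+1}=\alpha(K)$.

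We then use the Sommerville/Gram relation for polyhedral cones:
\[
\sum_{\hat F}(-1)^{\dim\hat F}\beta(\hat F,K)=(-1)^{d+1}\alpha(-K)\ \text{(up to the standard form)}.
\]
For a pointed cone $\alpha(-K)=\alpha(K)$ by the symmetry $u\mapsto -u$. This gives $\sum_j(-1)^j\sigma_{j,1}=(1+(-1)^d)\alpha(K)$, which is $2\alpha(K)$ for even $d$ and $0$ for odd $d$. If one prefers a self-contained argument, the identity can be proved by the classical hemisphere-counting argument: for a uniformly random direction $u$, count the faces whose tangent cone contains $u$, using the Euler characteristic of $K\cap$ halfspace.

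\emph{Step 2: extension in $\kappa$.} Fix a Euclidean polytope $P$ compactly contained in $\mathbb B^d(\kappa_0)$ for some $\kappa_0<0$. By Proposition~\ref{prop:normal_cone_in_constant_curvature}(c), each $\beta_\kappa(F,P)$ is the solid angle of the polar of a cone whose Gram matrix $\langle a_r,a_s\rangle+\kappa b_rb_s$ is affine in $\kappa$. Solid angles of simplicial cones are real-analytic functions of a positive definite Gram matrix; for non-simplicial cones, triangulate. Hence each side is real-analytic in $\kappa$ on an interval $(\kappa_0,\infty)$. Write $\Vol_{d,\kappa}(P)=\int_P(1+\kappa\|y\|^2)^{-(d+1)/2}\,\dd y$, which is analytic in $\kappa$ there. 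The right-hand side $\kappa^{d/2}\Vol_{d,\kappa}(P)$ is a polynomial power times an analytic function for even $d$, so it is analytic as well. By Step~1 the two sides agree for $\kappa>0$, so they agree for all $\kappa$ in the interval. Since any compact $P\subseteq\mathbb B^d(\kappa)$ fits in such a setting, this handles all non-ideal polytopes, including $\kappa=0$, where the identity reduces to Euler-type Gram relations.

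\emph{Step 3: ideal vertices.} For $\kappa<0$ and $P$ having ideal vertices, approximate $P$ by $P_t=tP$ with $t\uparrow1$, which is compact in the ball. The volume converges by monotone convergence, and the volume is finite since the ideal polytope has finite hyperbolic volume. For every face that is not an ideal vertex, the $\beta_\kappa$ values converge by continuity of the Gram matrices in $b_r\to b_r$. The main obstacle is showing that $\beta_\kappa(\{tv\},tP)\to0$ at ideal vertices $v$. At $x=tv$ the metric $G_x$ degenerates: the radial direction is stretched by a factor $(1+\kappa\|x\|^2)^{-1}\to\infty$ relative to the transverse directions. The tangent cone at $v$ of $P$ points into the ball, i.e. it lies in a halfspace strictly bounded away from the tangent directions of the sphere. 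After rescaling by $G_x^{1/2}$, the cone therefore collapses toward a single ray, and its solid angle tends to $0$. I would verify this with the Gram formula (c): the normals satisfy $\langle a_r,a_r\rangle+\kappa b_r^2\to$ a degenerate limit, and the polar cone becomes a half-space-like cone, giving normal-cone angle $\to1/2$ in the relevant sense and internal angle $\to0$.
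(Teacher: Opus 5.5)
Your proof is correct, and it takes a different route from the paper. The paper does not prove Theorem~\ref{theo:poincare_relation_curvature_kappa} at all. It quotes the result from the literature: Camenga for a unified statement, Alekseevskij--Vinberg--Solodovnikov for $\kappa<0$, Heckman, Ratcliffe and Kellerhals--Zehrt for ideal vertices, and the Gram and Sommerville relations for $\kappa=0$ and $\kappa=1$.

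You instead derive all cases from the single conic Sommerville relation at $\kappa=1$, and your stated form of it is exactly right. You then use the feature of the Klein model that the paper stresses in Remark~\ref{rem:curvature_rank_one_perturbation}: the polytope, its faces and its Euclidean tangent cones are fixed sets, and only $G_x$ depends on $\kappa$, analytically. Since $\mathbb B^d(\kappa)$ grows with $\kappa$, the identity propagates from $(0,\infty)$ to the whole admissible interval. Ideal vertices are then handled by the same $tP$ scaling device the paper uses in its volume theorems. Your route gives a unified, essentially self-contained proof that also explains the parity dichotomy and the convention $\beta_\kappa(\{v\},P)=0$ as a limit. The citation route buys brevity and authority.

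Three points should be tightened.

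\emph{Analyticity in Step~2.} Triangulating the normal cone does not help, because polarity does not respect triangulations. It is simpler to fix $x\in\relint F$ and write $\beta_\kappa(F,P)=\P[\,Y\in T(F,P)\,]$ with $Y\sim\mathrm N(0,G_x^{-1})$. This is visibly real-analytic in $\kappa$, and it also gives the continuity in Step~3 for all faces other than ideal vertices, since their relative interiors lie in the open ball.

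\emph{Monotonicity of the volume in Step~3.} The sets $tP$ increase with $t$ only if $0\in P$. Substitute $y=tz$ instead; the integrand becomes $t^d(1+\kappa t^2\|z\|^2)^{-(d+1)/2}$, which is increasing in $t$ for $\kappa<0$, so monotone convergence applies.

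\emph{The ideal vertex.} The vanishing at an ideal vertex $v$ is cleanest as follows. For every other vertex $w$ one has $\langle w-v,v\rangle<0$, so $T(v,P)\cap v^\perp=\{0\}$. By Lemma~\ref{lem:inverse_metric_matrix_Klein_model}, $G_{tv}^{-1}=(1-t^2)(I_d-t^2\hat v\hat v^{\mathsf T})$ with $\hat v=v/\|v\|$. Hence $Y/\sqrt{1-t^2}$ converges in law to a standard Gaussian on $v^\perp$, which almost surely misses the closed cone $T(v,P)$. Your closing remark that the normal-cone angle tends to $1/2$ is true but not needed.
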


A unified formulation for Euclidean, spherical, and hyperbolic
polytopes is given in~\cite[Section~4.2, Theorem~4.2.3]{camenga_thesis}.
For the hyperbolic case, a proof can be found in
\cite[pp.~120--121]{alekseevski_vinberg_solodovnikov_book}.
For hyperbolic polytopes, including polytopes with ideal
vertices, see also
\cite[Section~2, Corollary]{heckman_volume_coxeter}.
For hyperbolic simplices, including the extension to simplices with
ideal vertices, see
\cite[Section~11.3, Theorem~11.3.1 and Lemma~2]{ratcliffe_book}
and
\cite[Section~3.1, Theorem~3.1 and Corollary~3.2]
{kellerhals_zehrt_gauss_bonnet}.
For $\kappa=0$, the relation reduces to the Gram relation; see
\cite[Section~14.1]{GruenbaumBook} or
\cite[p.~199]{perles_shephard_angle_sums}.
For $\kappa=1$, it is Sommerville's relation; see
\cite[Section~5, Theorem~(37)]{perles_shephard_angle_sums}
and the original paper~\cite{sommerville_angle_sums_volume}.
A conic formulation is given in~\cite[Section~2.3 and Theorem~4.1]{AmelunxenLotzDCG17}.

\subsection{Orthocentric cones}
In this subsection, we recall the class of orthocentric cones introduced
in~\cite{kabluchko_schange_angles_orthocentric_simplices}. These polyhedral cones
will arise below as tangent and normal cones of polytopes in geometries
of constant sectional curvature, in particular, of boxes and regular
simplices.

\begin{definition}[Orthocentric cones]\label{def:ortho_cone_intro}
Let $m\in\N$, let
$\lambda_0,\lambda_1,\ldots,\lambda_m\in\R\setminus\{0\}$, and let
$\eps_1,\ldots,\eps_m\in\{\pm1\}$. Suppose that, for some $N\geq m$,
the linearly independent vectors $v_1,\ldots,v_m\in\R^N$ satisfy
\begin{equation}\label{eq:v_i_scalar_prod}
\langle v_i,v_j\rangle
=
\frac{1}{\lambda_0}+\frac{\delta_{ij}}{\lambda_i},
\qquad i,j\in\{1,\ldots,m\}.
\end{equation}
Then the polyhedral cone
$
\pos\{\eps_1v_1,\ldots,\eps_mv_m\}
$
is called an \emph{orthocentric cone} with parameters
$\lambda_0,\lambda_1,\ldots,\lambda_m$ and signs
$\eps_1,\ldots,\eps_m$. We denote any cone obtained in this way by
$$
C_m(\lambda_0;\lambda_1,\ldots,\lambda_m;
\eps_1,\ldots,\eps_m).
$$
\end{definition}
It has been shown in~\cite[Lemma~3.2]{kabluchko_schange_angles_orthocentric_simplices} that an orthocentric cone
$
C_m(\lambda_0;\lambda_1,\ldots,\lambda_m;
\eps_1,\ldots,\eps_m)
$
exists if and only if either
$
\lambda_0,\lambda_1,\ldots,\lambda_m>0,
$
or exactly one of the numbers
$\lambda_0,\lambda_1,\ldots,\lambda_m$ is negative, all the others are
positive, and
$
\lambda_0+\lambda_1+\cdots+\lambda_m<0.
$
Under either of these conditions, such a cone can be realized in
$\R^N$ for every $N\geq m$.

We shall use the following duality property of orthocentric cones
from~\cite[Theorem~4.2]{kabluchko_schange_angles_orthocentric_simplices}.

\begin{theorem}[The polar of an orthocentric cone]
\label{theo:orthocentric_cone_dual}
In the setting of Definition~\ref{def:ortho_cone_intro}, the polar in
$\R^N$ of the cone
$
\pos\{\eps_1v_1,\ldots,\eps_mv_m\}
$
is isometric to a direct orthogonal sum of $\R^{N-m}$ and
\begin{equation}\label{eq:C_d_cone_theo:orthocentric_cone_dual}
C_m\left(
-\lambda_0-\lambda_1-\cdots-\lambda_m;
\lambda_1,\ldots,\lambda_m;
\eps_1\sgn(\lambda_1),\ldots,\eps_m\sgn(\lambda_m)
\right).
\end{equation}
\end{theorem}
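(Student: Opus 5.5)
The plan is to compute the polar directly and identify its Gram matrix. Put $V:=\lin\{v_1,\ldots,v_m\}$. The polar in $\R^N$ of $C:=\pos\{\eps_1v_1,\ldots,\eps_mv_m\}$ is
\[
C^\circ = (C^\circ\cap V)\oplus V^\perp ,
\]
and $V^\perp\cong\R^{N-m}$. So it suffices to show that the polar of $C$ inside $V$ is an orthocentric cone with the parameters in \eqref{eq:C_d_cone_theo:orthocentric_cone_dual}. Since $v_1,\ldots,v_m$ form a basis of $V$, I will take the dual basis $w_1,\ldots,w_m\in V$, defined by $\langle w_i,v_j\rangle=\delta_{ij}$. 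Then the polar of $C$ in $V$ is $\pos\{-\eps_1w_1,\ldots,-\eps_mw_m\}$. Applying the isometry $u\mapsto -u$, it is isometric to $\pos\{\eps_1w_1,\ldots,\eps_mw_m\}$.

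The main step is computing the Gram matrix of the $w_i$. It equals $A^{-1}$, where
\[
A=D+\lambda_0^{-1}\mathbf 1\mathbf 1^{\mathsf T},\qquad D=\operatorname{diag}(\lambda_1^{-1},\ldots,\lambda_m^{-1}),
\]
is the Gram matrix of the $v_i$. I will invert $A$ by the Sherman--Morrison idea used in Lemma~\ref{lem:sherman_morrison}, now in its version with a diagonal matrix. Put $\Lambda:=\operatorname{diag}(\lambda_1,\ldots,\lambda_m)$, $\lambda:=(\lambda_1,\ldots,\lambda_m)^{\mathsf T}$ and $S:=\lambda_0+\lambda_1+\cdots+\lambda_m$. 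This should give
\[
A^{-1}=\Lambda-\frac{\lambda\lambda^{\mathsf T}}{S},
\]
which is valid when $S\neq 0$. The condition $S\neq0$ follows from the existence criterion, or more directly from $\det A\neq0$, since $\det A=\det D\,(1+\lambda_0^{-1}\sum_i\lambda_i)$. The entries are $\langle w_i,w_j\rangle=\lambda_i\delta_{ij}-\lambda_i\lambda_j/S$.

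Next I rescale the generators. Set
\[
u_i:=\frac{w_i}{|\lambda_i|}=\frac{\sgn(\lambda_i)\,w_i}{\lambda_i},
\]
which does not change the positive hull, because $\eps_iw_i=|\lambda_i|\,\eps_i\sgn(\lambda_i)\,u_i$. A direct computation gives
\[
\langle u_i,u_j\rangle=\frac{\delta_{ij}}{\lambda_i}+\frac{1}{-S}.
\]
This is exactly \eqref{eq:v_i_scalar_prod} with $\lambda_0$ replaced by $-S$ and $\lambda_1,\ldots,\lambda_m$ unchanged. The $u_i$ are linearly independent. Hence
\[
\pos\{\eps_iw_i\}=\pos\{\eps_i\sgn(\lambda_i)\,u_i\}
=C_m\bigl(-S;\lambda_1,\ldots,\lambda_m;\eps_1\sgn(\lambda_1),\ldots,\eps_m\sgn(\lambda_m)\bigr).
\]

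The only delicate point is sign bookkeeping. First, the minus sign from polarity has to be absorbed by the global reflection $u\mapsto -u$. Second, when some $\lambda_i<0$ the rescaling by $1/\lambda_i$ flips a sign, and I rescale by $|\lambda_i|$ so that this flip shows up as the factor $\sgn(\lambda_i)$ in the new signs. A final check is that $-S$ is admissible as a parameter, which holds because $S\neq 0$. I expect the inversion of $A$ to be routine. Getting the signs right is the main thing to verify.
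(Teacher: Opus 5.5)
The paper does not prove this statement; it imports it from \cite[Theorem~4.2]{kabluchko_schange_angles_orthocentric_simplices}, so your argument has to stand on its own. Several steps are correct:
\begin{itemize}
\item splitting off $V^\perp$;
\item describing the polar inside $V$ as $\pos\{-\eps_iw_i\}$ via the dual basis;
\item identifying the Gram matrix of the $w_i$ as $A^{-1}=\Lambda-\lambda\lambda^{\mathsf T}/S$;
\item the determinant argument for $S\neq0$.
\end{itemize}

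However, the rescaling step, which you yourself flagged as the delicate one, is wrong as written. With $u_i:=w_i/|\lambda_i|$ you get
\[
\langle u_i,u_j\rangle
=\frac{\lambda_i\delta_{ij}-\lambda_i\lambda_j/S}{|\lambda_i|\,|\lambda_j|}
=\sgn(\lambda_i)\sgn(\lambda_j)\Bigl(\frac{\delta_{ij}}{\lambda_i}-\frac1S\Bigr).
\]
The off-diagonal entries are therefore $+1/S$ rather than $-1/S$ for every pair $(i,j)$ with $\lambda_i<0$. This case is allowed: the negative parameter may lie among $\lambda_1,\ldots,\lambda_m$. For example, $\lambda_0=1$, $\lambda_1=-3$, $\lambda_2=1$ gives $S=-1$ and $\langle u_1,u_2\rangle=-1$, whereas your formula predicts $+1$. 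Likewise, $|\lambda_i|\,\eps_i\sgn(\lambda_i)\,u_i=\eps_i\sgn(\lambda_i)\,w_i\neq\eps_iw_i$ when $\lambda_i<0$. The two mistakes cancel in your final conclusion, but both displayed identities are false whenever some $\lambda_i<0$.

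The fix is to divide by the signed quantity, $u_i:=w_i/\lambda_i$. Then
\begin{itemize}
\item $\langle u_i,u_j\rangle=\delta_{ij}/\lambda_i-1/S$ holds exactly;
\item $\eps_iw_i=\lambda_i\eps_iu_i=|\lambda_i|\,\eps_i\sgn(\lambda_i)\,u_i$;
\item hence $\pos\{\eps_iw_i\}=\pos\{\eps_i\sgn(\lambda_i)u_i\}$, which is the orthocentric cone with parameters $(-S;\lambda_1,\ldots,\lambda_m)$ and signs $\eps_i\sgn(\lambda_i)$.
\end{itemize}
Your verbal explanation has this backwards. Division by $\lambda_i$, not by $|\lambda_i|$, reverses $w_i$ when $\lambda_i<0$, and that reversal is what moves the factor $\sgn(\lambda_i)$ into the signs. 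With this correction your dual-basis argument is a complete, self-contained proof of the cited theorem.
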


For our applications, we require a closed-form expression for the
Euclidean solid angles of orthocentric cones with
$\lambda_1,\ldots,\lambda_m>0$. Only orthocentric cones satisfying this
condition will occur below. Their solid angles can be expressed in terms
of the function $\mathbf{g}_m$ defined as follows; see
~\cite[Theorem~5.2]{kabluchko_schange_angles_orthocentric_simplices}
combined with
~\cite[Theorem~7.5]{kabluchko_schange_angles_orthocentric_simplices}.

\begin{definition}[Function $\mathbf{g}_m$]
\label{def:explicit_g_d_formula_intro}
Let $m\geq0$ and $\lambda_1,\ldots,\lambda_m>0$, and suppose that
either
$
\lambda_0>0
$
or
$\lambda_0<-(\lambda_1+\cdots+\lambda_m)$.
For $\eps_1,\ldots,\eps_m\in\{\pm1\}$, define
\begin{equation}\label{eq:g_d_formula_intro_positive}
\mathbf{g}_m(\lambda_0;\lambda_1,\ldots,\lambda_m;
\eps_1,\ldots,\eps_m)
=
\frac{1}{\sqrt{2\pi}}
\int_{-\infty}^{\infty}
\prod_{j=1}^m
\Phi\left(
\eps_j\sqrt{\frac{\lambda_j}{\lambda_0}}\,x
\right)
\eee^{-x^2/2}\dd x.
\end{equation}
Here, $\Phi$ denotes the analytic continuation to $\CC$ of the standard
normal distribution function; see~\eqref{eq:intro_entire_normal_distribution_function}.
Here and below, for $a>0$, we use the convention $\sqrt{-a}:=\ii\sqrt{a}$.
For $m=0$, the product in~\eqref{eq:g_d_formula_intro_positive} is
understood to be equal to $1$, so that
$
\mathbf{g}_0(\lambda_0)=1.
$
\end{definition}

\begin{theorem}[Solid angles of orthocentric cones]
\label{theo:ortho_cone_angle}
Let $m\in\N$ and $\lambda_1,\ldots,\lambda_m>0$, and suppose that
either
$
\lambda_0>0
$
or
$\lambda_0<-(\lambda_1+\cdots+\lambda_m)$.
Then, for every $\eps_1,\ldots,\eps_m\in\{\pm1\}$, the Euclidean
solid angle of the orthocentric cone
$
C
=
C_m(\lambda_0;\lambda_1,\ldots,\lambda_m;
\eps_1,\ldots,\eps_m)
$
is given by
\begin{equation}\label{eq:solid_angle_orthocentric_cone}
\alpha(C)
=
\mathbf{g}_m\left(
-\lambda_0-\lambda_1-\ldots-\lambda_m;
\lambda_1,\ldots,\lambda_m;
\eps_1,\ldots,\eps_m
\right).
\end{equation}
\end{theorem}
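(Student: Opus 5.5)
We plan to prove the formula directly, by writing the solid angle as a Gaussian orthant probability and computing that probability by conditioning. We do not pass through the polar cone of Theorem~\ref{theo:orthocentric_cone_dual}.

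\emph{Reduction to an orthant probability.} Let $L=\lin\{v_1,\ldots,v_m\}$ and let $g$ be a standard Gaussian vector in $L$, so that $\alpha(C)=\P(g\in C)$. Let $A=(\langle v_i,v_j\rangle)_{i,j=1}^m$ be the Gram matrix. By~\eqref{eq:v_i_scalar_prod},
\[
A=D^{-1}+\lambda_0^{-1}\mathbf 1\mathbf 1^{\mathsf T},
\qquad D=\mathrm{diag}(\lambda_1,\ldots,\lambda_m).
\]
Let $w_1,\ldots,w_m\in L$ be the dual basis, $w_j=\sum_k (A^{-1})_{jk}v_k$. Since $g\in L$, its coefficient on $v_j$ is $\langle g,w_j\rangle$, so
\[
g\in C \iff \eps_j\langle g,w_j\rangle\ge0 \text{ for all } j.
\]
The vector $X=(\langle g,w_j\rangle)_j$ is centred Gaussian with covariance $A^{-1}$. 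Put $\lambda=(\lambda_1,\ldots,\lambda_m)^{\mathsf T}$ and $\Lambda:=\lambda_0+\lambda_1+\cdots+\lambda_m$. A Sherman--Morrison computation in the spirit of Lemma~\ref{lem:sherman_morrison} gives
\[
A^{-1}=D+t\,\lambda\lambda^{\mathsf T},\qquad t:=-1/\Lambda .
\]
Hence $\alpha(C)=p(t):=\P(\eps_jX_j\ge0\ \forall j)$ with $X\sim\mathrm N(0,D+t\lambda\lambda^{\mathsf T})$. The target value is $\mathbf g_m(-\Lambda;\lambda_1,\ldots,\lambda_m;\eps)$, in which the ratio $\lambda_j/(-\Lambda)$ equals $t\lambda_j$.

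\emph{Case $t>0$, i.e.\ $\lambda_0<-(\lambda_1+\cdots+\lambda_m)$.} Here the rank-one term is a genuine nonnegative perturbation, so we may write
\[
X_j=\sqrt{\lambda_j}\,Z_j+\sqrt t\,\lambda_j W
\]
with $Z_1,\ldots,Z_m,W$ i.i.d.\ $\mathrm N(0,1)$. Conditionally on $W=x$, the events $\{\eps_jX_j\ge0\}$ are independent, and
\[
\P(\eps_j X_j\ge0\mid W=x)=\Phi\bigl(\eps_j\sqrt{t\lambda_j}\,x\bigr).
\]
Integrating over $x$ gives exactly~\eqref{eq:g_d_formula_intro_positive} with $\lambda_0$ replaced by $-\Lambda$, since $\sqrt{\lambda_j/(-\Lambda)}=\sqrt{t\lambda_j}$.

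\emph{Case $\lambda_0>0$.} Now $t\in(-1/(\lambda_1+\cdots+\lambda_m),0)$, and we extend the identity by analytic continuation in $t$. Set $s:=\lambda_1+\cdots+\lambda_m$ and define
\[
h(t)=\frac1{\sqrt{2\pi}}\int_{-\infty}^{\infty}\prod_{j=1}^m\Phi\bigl(\eps_j\sqrt{t\lambda_j}\,x\bigr)\eee^{-x^2/2}\,\dd x .
\]
We need three facts.
\begin{enumerate}[label=(\roman*)]
\item $h$ is holomorphic on the disc $|t|<1/s$. Replacing $\sqrt t$ by $-\sqrt t$ and substituting $x\mapsto -x$ leaves the integral unchanged, so $h$ depends only on $t$. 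The bound $|\Phi(z)|\le C(1+\eee^{(\Im z)^2/2})$ gives a dominating function of size about $\exp\bigl((\sum_j\lambda_j|t|-1)x^2/2\bigr)$, which is integrable locally uniformly for $|t|<1/s$. Morera's theorem then gives holomorphy.
\item $p$ is real-analytic on $(-1/s,\infty)$, where $D+t\lambda\lambda^{\mathsf T}$ is positive definite. Write $p(t)$ as the integral of the Gaussian density over the orthant; this density extends holomorphically to complex $t$ near the real interval with locally uniform Gaussian domination.
\item On $(-1/s,1/s)$, the real functions $p$ and $h$ are both analytic, and they agree on $(0,1/s)$ by the previous case. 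The identity theorem then gives $p=h$ on the whole interval, in particular at $t=-1/\Lambda$.
\end{enumerate}

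The convention $\sqrt{-a}=\ii\sqrt a$ makes the branch used in $h$ consistent with the statement. For $t<0$ the arguments $\eps_j\sqrt{t\lambda_j}\,x$ are purely imaginary, which is exactly the form appearing in $\mathbf g_m(-\Lambda;\ldots)$ when $-\Lambda<0$.

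The main obstacle is the analytic continuation step, specifically the growth bound for $\Phi$ on the imaginary axis. The disc of convergence $|t|<1/s$ must be large enough to contain the point $-1/\Lambda$. This holds because $\lambda_0>0$ gives $\Lambda>s$, hence $1/\Lambda<1/s$.
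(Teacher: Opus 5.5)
Your argument is correct, and it takes a genuinely different route from the paper, which contains no proof of this statement. The paper imports it from \cite{kabluchko_schange_angles_orthocentric_simplices} by combining Theorems~5.2 and~7.5 there; you give a self-contained derivation instead.

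Your dual-basis step turns $\alpha(C)$ into an orthant probability for $X\sim\mathrm N(0,D-\Lambda^{-1}\lambda\lambda^{\mathsf T})$. The rescaled dual vectors $w_j/\lambda_j$ have Gram matrix $\delta_{ij}/\lambda_j-1/\Lambda$. So you are in effect computing with the polar cone $C_m(-\Lambda;\lambda_1,\ldots,\lambda_m;\ldots)$, and you recover Theorem~\ref{theo:orthocentric_cone_dual} as a by-product rather than avoiding it.

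One-factor conditioning then settles the range $\lambda_0<-(\lambda_1+\cdots+\lambda_m)$ directly. Analytic continuation in $t$ through $0$ settles $\lambda_0>0$. The bound $|\Phi(a+\ii b)|\le C(1+\eee^{b^2/2})$ gives exactly the radius $1/s$, and $\Lambda>s$ puts $t=-1/\Lambda$ inside it.

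The citation keeps the paper short and ties it to the general theory of $\mathbf g_m$ developed earlier. Your route needs only Sherman--Morrison and Gaussian conditioning. It also makes clear why imaginary arguments appear: a negative rank-one perturbation of the covariance cannot be produced by adding an independent common Gaussian factor.

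One step should be tightened. For fixed $x$, the integrand $\prod_j\Phi(\eps_j\sqrt{t\lambda_j}\,x)$ is not a holomorphic function of $t$, since it contains odd powers of $\sqrt t$. So Morera's theorem cannot be applied in the variable $t$ as you wrote it. There are two easy fixes:
\begin{itemize}
\item Apply Morera in $u=\sqrt t$ on $|u|<s^{-1/2}$, where the domination $\exp\bigl(-(1-|u|^2s)x^2/2\bigr)$ is locally uniform. The resulting holomorphic $H(u)$ is even, so its Taylor series has only even powers, and $h(t)=H(\sqrt t)$ is holomorphic on $|t|<1/s$.
\item Alternatively, first replace the integrand by its symmetrization under $x\mapsto-x$. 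This is an even entire function of $\sqrt t$, hence entire in $t$, and it satisfies the same domination.
\end{itemize}
With this fix the continuation argument, and hence the proof, is complete.
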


\section{Angles and volumes of boxes}
In this section, we shall derive closed-form expressions for internal and external angles of boxes in geometries of constant curvature, and for their volumes.
Fix a dimension $d\geq 2$ and a curvature $\kappa \in \R$. For parameters $\tau_1^-,\ldots,\tau_d^- >0$ and $\tau_1^+, \ldots, \tau_d^+>0$ consider the box
\begin{equation}\label{eq:box_rectangular_def}
P
:=
[-\tau_1^-,\tau_1^+]\times\cdots\times[-\tau_d^-,\tau_d^+]
=
\left\{
x\in\mathbb{R}^d:
-\tau_i^- \le x_i \le \tau_i^+,\ i=1,\ldots,d
\right\}
.
\end{equation}
We assume that \(P\) is contained in the closure of the Klein model $\bar{\mathbb B}^d(\kappa)$. This is always the case for $\kappa \geq 0$. For $\kappa< 0$, this is equivalent to requiring that every vertex of \(P\) lies in \(\bar{\mathbb B}^d(\kappa) \) and hence the  containment condition is
\[
\sum_{i=1}^d \left(\max\{\tau_i^-,\tau_i^+\}\right)^2
\leq
-\frac{1}{\kappa}\qquad   \qquad (\kappa <0).
\]

\subsection{Internal and external angles of boxes}
Our aim is to derive closed-form expressions for the internal and
external angles of $P$ at its faces.  For \(1\le p\le d\), consider the \((d-p)\)-dimensional face
\begin{equation}\label{eq:box_rectangular_face_def}
F
:=
\{x\in P:x_1=\tau_1^+,\ldots,x_p=\tau_p^+\}
=
\{\tau_1^+\}\times\cdots\times\{\tau_p^+\}
\times
[-\tau_{p+1}^-,\tau_{p+1}^+]\times\cdots\times[-\tau_d^-,\tau_d^+].
\end{equation}
After permuting the coordinates and, if necessary, reflecting some
coordinate axes, there is no loss of generality in considering a face
of this form. The next theorem describes the tangent and normal cones at $F$ in terms
of orthocentric cones with explicit parameters and provides closed-form
expressions for the corresponding angles. We note that the ambient space
of the tangent and normal cones is the inner product space
$(T_x\mathbb B^d(\kappa),g_x)\simeq(\R^d,g_x)$, whereas the orthocentric
cones appearing below are regarded as cones in $\R^d$ endowed with the
standard scalar product $\langle\cdot,\cdot\rangle$.

\begin{theorem}[Internal and external angles of boxes] \label{theo:hyperbolic_angles_boxes}
Let $d\geq 2$ be an integer, $\kappa\in \R\backslash\{0\}$ and let $P$ be a box defined by~\eqref{eq:box_rectangular_def} for some  $\tau_1^-,\ldots,\tau_d^- >0$ and $\tau_1^+, \ldots, \tau_d^+>0$. Assume that $P \subseteq \bar{\mathbb B}^d(\kappa)$. For $1\leq p\leq d$, let $F$ be the face of $P$ defined
by~\eqref{eq:box_rectangular_face_def} and suppose that, when $\kappa<0$,
the face $F$ is not an ideal vertex. Let
$x\in\operatorname{relint}(F)\cap\mathbb B^d(\kappa)$.   Then the following assertions hold.
\begin{itemize}
\item[(a)] The normal cone $N_x(F,P)\subseteq (T_x \mathbb  B^d(\kappa), g_x)$ of $P$ at $F$ is isometric to the orthocentric cone
\begin{equation}\label{eq:box_const_curv_normal_cone_as_orthocentric}
C_p\left(\frac 1 \kappa; (\tau_1^+)^2,\ldots, (\tau_p^+)^2; 1,\ldots, 1\right).
\end{equation}
\item[(b)] The external angle at $F$ is given by
\begin{align*}
\alpha_x(N_x(F,P))
&=
\mathbf{g}_p \left(-\frac 1 {\kappa} - (\tau_1^+)^2 - \ldots - (\tau_p^+)^2; (\tau_1^+)^2,\ldots, (\tau_p^+)^2; 1,\ldots, 1\right) \\
&=
\frac{1}{\sqrt{2 \pi}} \int_{-\infty}^{\infty} \prod_{j=1}^p \Phi \left(\frac{\tau_j^+\, z }{\sqrt{-\frac 1 {\kappa} - (\tau_1^+)^2 - \ldots - (\tau_p^+)^2}} \right) \eee^{-z^2/2} \dd z.
\end{align*}
\item[(c)] The tangent cone $T_x(F,P)\subseteq (T_x \mathbb  B^d(\kappa), g_x)$ of $P$ at $F$ is isometric to the direct orthogonal sum of $\R^{d-p}$ and the orthocentric cone
\begin{equation}\label{eq:box_const_curv_tangent_cone_as_orthocentric}
C_p\left(-\frac 1\kappa - (\tau_1^+)^2 - \ldots - (\tau_p^+)^2; (\tau_1^+)^2,\ldots, (\tau_p^+)^2; 1,\ldots, 1\right).
\end{equation}
\item[(d)] The internal angle at $F$ is given by
\begin{align*}
\alpha_x(T_x(F,P))
=
\mathbf{g}_p \left(\frac 1 {\kappa}; (\tau_1^+)^2,\ldots, (\tau_p^+)^2; 1,\ldots, 1\right)
=
\frac{1}{\sqrt{2 \pi}} \int_{-\infty}^{\infty} \prod_{j=1}^p \Phi \left(\tau_j^+ \sqrt {\kappa}\, z \right) \eee^{-z^2/2} \dd z.
\end{align*}
\end{itemize}
\end{theorem}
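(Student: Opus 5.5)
The plan is to read everything off Proposition~\ref{prop:normal_cone_in_constant_curvature}(c) and then apply Theorems~\ref{theo:orthocentric_cone_dual} and~\ref{theo:ortho_cone_angle}.

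\emph{Normal cone.} In the irredundant H-representation of the box, the facets containing $F$ are given by $\langle e_r, y\rangle \le \tau_r^+$ for $r=1,\ldots,p$. Thus $I(F)=\{1,\ldots,p\}$, with $a_r=e_r$ and $b_r=\tau_r^+$. By Proposition~\ref{prop:normal_cone_in_constant_curvature}(b),(c), $N_x(F,P)$ is the positive hull of vectors $\tilde n_1(x),\ldots,\tilde n_p(x)$ whose $g_x$-Gram matrix is
\[
\delta_{rs}+\kappa\,\tau_r^+\tau_s^+ .
\]
Rescaling each generator by the positive factor $1/(\sqrt{|\kappa|}\,\tau_r^+)$ does not change the cone. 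The rescaled vectors $w_r$ have Gram matrix
\[
\frac{\sgn\kappa}{|\kappa|}\cdot\frac{\kappa}{|\kappa|}\cdot\ldots
\]
More directly, take $w_r:=\tilde n_r/(\sqrt{|\kappa|}\,\tau_r^+)\cdot\sqrt{|\kappa|}$, i.e.\ $w_r=\tilde n_r/\tau_r^+$. Then
\[
\langle w_r,w_s\rangle=\kappa+\frac{\delta_{rs}}{(\tau_r^+)^2}
=\frac{1}{1/\kappa}+\frac{\delta_{rs}}{(\tau_r^+)^2}.
\]
The vectors $\tilde n_r$ are linearly independent by the dimension statement in part~(a), since the span has dimension $d-\dim F=p$. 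Transporting by a linear isometry $(\R^d,g_x)\to(\R^d,\langle\cdot,\cdot\rangle)$ therefore gives exactly Definition~\ref{def:ortho_cone_intro} with $\lambda_0=1/\kappa$, $\lambda_j=(\tau_j^+)^2$ and all signs $+1$. This proves~(a).

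\emph{External angle.} For (b) I apply Theorem~\ref{theo:ortho_cone_angle}. This requires checking its hypothesis: either $\lambda_0>0$, or $\lambda_0<-\sum\lambda_j$. For $\kappa>0$ the first condition holds. For $\kappa<0$ we need $-1/\kappa>\sum_{j\le p}(\tau_j^+)^2$, i.e.\ $1+\kappa\sum_{j\le p}(\tau_j^+)^2>0$. This is the main point requiring care. It holds because $x\in\mathbb B^d(\kappa)$ has $x_j=\tau_j^+$ for $j\le p$, so
\[
\sum_{j\le p}(\tau_j^+)^2\le\|x\|^2<-1/\kappa .
\]
Then Theorem~\ref{theo:ortho_cone_angle} gives $\mathbf g_p(-1/\kappa-\sum(\tau_j^+)^2;\ldots)$. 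Substituting into \eqref{eq:g_d_formula_intro_positive} with $\sqrt{\lambda_j/\lambda_0}=\tau_j^+/\sqrt{\lambda_0}$ gives the displayed integral. The square-root convention handles the case $\kappa>0$, where $\lambda_0<0$.

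\emph{Tangent cone and internal angle.} $T_x(F,P)$ is the $g_x$-polar of $N_x(F,P)$ within $T_x\mathbb B^d(\kappa)$. Hence, under the same isometry, it is the Euclidean polar in $\R^d$ of the orthocentric cone from~(a). Theorem~\ref{theo:orthocentric_cone_dual} identifies this polar as $\R^{d-p}\oplus C_p(-1/\kappa-\sum(\tau_j^+)^2;(\tau_j^+)^2;1,\ldots,1)$, since all $\sgn\lambda_j=1$; this is~(c).

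For~(d), the solid angle of $\R^{d-p}\oplus C$ equals $\alpha(C)$. To apply Theorem~\ref{theo:ortho_cone_angle} to the new cone with $\lambda_0'=-1/\kappa-\sum\lambda_j$, the hypothesis is the same dichotomy. If $\kappa<0$, then $\lambda_0'>0$ by the inequality above. If $\kappa>0$, then $\lambda_0'<-\sum\lambda_j$ trivially. The resulting new parameter is $-\lambda_0'-\sum\lambda_j=1/\kappa$, so the angle is $\mathbf g_p(1/\kappa;(\tau_j^+)^2;1,\ldots,1)$. Expanding via \eqref{eq:g_d_formula_intro_positive} with $\sqrt{\lambda_j/\lambda_0}=\tau_j^+\sqrt\kappa$ (imaginary when $\kappa<0$) yields the stated integral.
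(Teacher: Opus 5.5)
Your proposal is correct and follows the paper's proof essentially step by step: the Gram matrix $\delta_{rs}+\kappa\tau_r^+\tau_s^+$ from Proposition~\ref{prop:normal_cone_in_constant_curvature}(c), rescaling by $1/\tau_r^+$, polarity via Theorem~\ref{theo:orthocentric_cone_dual}, and then Theorem~\ref{theo:ortho_cone_angle}. Your explicit check of the hypothesis ($\lambda_0>0$ or $\lambda_0<-\sum_j\lambda_j$) via $\sum_{j\le p}(\tau_j^+)^2\le\|x\|^2<-1/\kappa$ is a useful detail that the paper leaves implicit.

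One small correction concerns part~(b) when $\kappa>0$, where $\lambda_0=-1/\kappa-\sum_j(\tau_j^+)^2<0$. There the identity $\sqrt{\lambda_j/\lambda_0}=\tau_j^+/\sqrt{\lambda_0}$ fails under the convention $\sqrt{-a}=\ii\sqrt a$: the left side is $\ii\tau_j^+/\sqrt{|\lambda_0|}$, while the right side is $-\ii\tau_j^+/\sqrt{|\lambda_0|}$. So the convention alone does not give the displayed integral. You also need the substitution $z\mapsto-z$, which is harmless because all arguments of $\Phi$ flip sign together and $\eee^{-z^2/2}$ is even. This is exactly the remark made in the paper's proof.
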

\begin{proof}
\emph{Proof of (a).}
The irredundant H-representation of $P$ is
$$
P = \{x\in \R^d: \langle x, e_i \rangle\leq \tau_i^+, \langle x, -e_i \rangle\leq \tau_i^-, i=1,\ldots, d\}.
$$
The facets of $P$ have the form $P\cap \{x_i = \tau_i^+\}$ and $P\cap \{x_i = -\tau_i^-\}$, for $i=1,\ldots, d$. The facets of $P$ containing $F$ are precisely $P \cap \{x_i = \tau_i^+\}$ for $i=1,\ldots, p$. Their Euclidean outward normal vectors can be chosen to be $a_1=e_1,\ldots, a_p = e_p$, with corresponding $b_1 = \tau_1^+, \ldots, b_p = \tau_p^+$.

Now let $x\in\operatorname{relint}(F)\cap\mathbb B^d(\kappa)$. By Proposition~\ref{prop:normal_cone_in_constant_curvature}, the normal cone $N_x(F,P)$ is positively spanned by the vectors $\tilde n_1(x),\ldots, \tilde n_p(x)$ satisfying
$$
g_x(\tilde n_r(x), \tilde n_s(x))
=
\langle a_r,a_s\rangle+\kappa b_rb_s = \delta_{rs} + \kappa \tau_r^+ \tau_s^+, \qquad r,s = 1,\ldots, p.
$$
The Riemannian scalar products of the vectors
$
n_1'(x):=\tilde n_1(x)/\tau_1^+,
\ldots,
n_p'(x):=\tilde n_p(x)/\tau_p^+
$
are given by
\begin{align*}
g_x(n_r'(x),n_s'(x))
=
\frac{
g_x(\tilde n_r(x),\tilde n_s(x))
}{
\tau_r^+\tau_s^+
}
=
\frac{
\delta_{rs}+\kappa\tau_r^+\tau_s^+
}{
\tau_r^+\tau_s^+
}
=
\frac{\delta_{rs}}{(\tau_r^+)^2}+\kappa,
\qquad r,s=1,\ldots,p.
\end{align*}
By Definition~\ref{def:ortho_cone_intro}, the normal cone $N_x(F,P) = \pos (n_1'(x),\dots, n_p'(x))$ is isometric to the orthocentric cone~\eqref{eq:box_const_curv_normal_cone_as_orthocentric}, proving (a).

\vspace*{2mm}
\noindent
\emph{Proof of (c).} The tangent cone $T_x(F,P)$ is the polar cone of $N_x(F,P)$ in $(T_x\mathbb B^d(\kappa), g_x)$.
Let $I:T_x\mathbb B^d(\kappa) \to \R^d$ be an isometry between $(T_x\mathbb B^d(\kappa), g_x)$ and the Euclidean space $\R^d$ endowed with the standard scalar product  $\langle\cdot, \cdot \rangle$. Then $IT_x(F,P)$ is the polar cone of $IN_x(F,P)$ in the Euclidean space $\R^d$. Now, $IN_x(F,P)$ is isometric to~\eqref{eq:box_const_curv_normal_cone_as_orthocentric} by part~(a) and the dimension of its linear hull is $d-(d-p) = p$ by Proposition~\ref{prop:normal_cone_in_constant_curvature}, part~(a). By Theorem~\ref{theo:orthocentric_cone_dual},  the polar cone of $IN_x(F,P)$ is isometric to a direct orthogonal sum of $\R^{d-p}$ and the orthocentric cone~\eqref{eq:box_const_curv_tangent_cone_as_orthocentric}. This proves part~(c).

\vspace*{2mm}
\noindent
\emph{Proof of (b) and (d).}
Since taking a direct orthogonal sum with a Euclidean space does not
change the solid angle of a cone, both parts follow from the
characterizations of the normal and tangent cones in~(a) and~(c),
combined with Theorem~\ref{theo:ortho_cone_angle} and Definition~\ref{def:explicit_g_d_formula_intro}. In deriving the
integral representation in~(b), the square root obtained directly from
Definition~\ref{def:explicit_g_d_formula_intro} may differ from the
displayed one by a common factor $-1$ in all arguments of $\Phi$. This
does not change the integral, by the substitution $z\mapsto-z$.
\end{proof}

In Theorem~\ref{theo:hyperbolic_angles_boxes}, we excluded the Euclidean
case.
For $\kappa=0$, the situation is much simpler. For every
$1\leq p\leq d$ and every $(d-p)$-dimensional face $F$ of $P$, the
tangent cone is isometric to
$
\R^{d-p}\oplus[0,\infty)^p
$.
Hence, both the internal and external angles at $F$ are equal to
$2^{-p}$.

\subsection{Volumes of boxes}
The next result states a closed-form expression for the Riemannian volume of boxes.
\begin{theorem}[Volume of boxes]\label{theo:vol_boxes}
Let $d\geq 2$, $\kappa\in \R\backslash\{0\}$ and let $P=[-\tau_1^-, \tau_1^+] \times \ldots \times [-\tau_d^-, \tau_d^+]$ for some  $\tau_1^-,\ldots,\tau_d^- >0$ and $\tau_1^+, \ldots, \tau_d^+>0$. Assume that $P \subseteq \bar{\mathbb B}^d(\kappa)$. Then the Riemannian volume of $P$ is given by
\begin{equation}\label{eq:vol_boxes}
\Vol_{d,\kappa}(P)
=
\frac{\omega_{d+1}}{\kappa^{d/2}\sqrt{2\pi}} \int_0^\infty \prod_{j=1}^d \left(\Phi\left(\tau_j^+ \sqrt\kappa\, z\right) -\Phi\left(-\tau_j^-\sqrt\kappa\, z\right) \right) \eee^{-z^2/2}\,\dd z,
\end{equation}
where
$
\omega_{d+1}
=
\frac{2\pi^{(d+1)/2}}
{\Gamma((d+1)/2)}
$
is the $d$-dimensional surface area of the unit sphere
$\mathbb S^d\subseteq\R^{d+1}$ and, for $\kappa<0$, we understand
$
\kappa^{d/2}:=(\sqrt{\kappa})^d
=\ii^d(-\kappa)^{d/2}
$.
\end{theorem}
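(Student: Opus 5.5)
The plan is to compute the volume integral \eqref{eq:riemannian_volume_klein_model} over the box directly, without the Poincar\'e relation, so that the argument works in every dimension. It is easiest to do this first for $\kappa>0$ and then pass to $\kappa<0$ by analytic continuation in $\kappa$. The key device is the Gamma-function identity
\[
\frac{1}{(1+\kappa\|y\|^2)^{(d+1)/2}}
=
\frac{1}{\Gamma\left(\frac{d+1}{2}\right)}
\int_0^\infty t^{(d-1)/2}\,\eee^{-t(1+\kappa\|y\|^2)}\,\dd t ,
\]
valid whenever $1+\kappa\|y\|^2>0$. For $\kappa>0$ it is harmless to insert this into $\Vol_{d,\kappa}(P)$ and exchange the order of integration, since everything is positive (Tonelli).

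With the order exchanged, the $y$-integral factorizes over the coordinates. Each factor is a Gaussian integral:
\[
\int_{-\tau_j^-}^{\tau_j^+}\eee^{-t\kappa y_j^2}\,\dd y_j
=
\sqrt{\frac{\pi}{t\kappa}}
\left(\Phi\bigl(\tau_j^+\sqrt{2t\kappa}\bigr)-\Phi\bigl(-\tau_j^-\sqrt{2t\kappa}\bigr)\right).
\]
Multiplying the $d$ factors leaves the $t$-integrand $\pi^{d/2}\kappa^{-d/2}\,t^{-1/2}\eee^{-t}\prod_j(\cdots)$. I then substitute $t=z^2/2$, so that $t^{-1/2}\,\dd t=\sqrt2\,\dd z$ and $\sqrt{2t\kappa}=\sqrt\kappa\, z$. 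This gives the prefactor $\sqrt2\,\pi^{d/2}/\bigl(\kappa^{d/2}\Gamma((d+1)/2)\bigr)$. A quick check shows this equals $\omega_{d+1}/(\kappa^{d/2}\sqrt{2\pi})$, which is exactly \eqref{eq:vol_boxes} for $\kappa>0$.

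For $\kappa<0$ I will argue by analytic continuation in $\kappa$, with the box fixed. Set $R^2:=\sum_j\max\{\tau_j^-,\tau_j^+\}^2$, the largest value of $\|y\|^2$ on $P$.

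\emph{Left-hand side.} Take the principal branch of the power. Then the left-hand side is holomorphic in $\kappa$ on $\CC\setminus(-\infty,-1/R^2]$.

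\emph{Right-hand side.} Write $w=\sqrt\kappa$ and $f(w)=\prod_j\bigl(\Phi(\tau_j^+wz)-\Phi(-\tau_j^-wz)\bigr)$. Using $\Phi(-u)=1-\Phi(u)$, one gets $f(-w)=(-1)^df(w)$. Hence $w^{-d}f(w)$ is even in $w$, so it is a genuine function of $\kappa$, holomorphic away from $\kappa=0$. The singularity at $\kappa=0$ is removable, because $f(w)=O(w^d)$.

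\emph{Convergence of the $z$-integral.} Since $|\Phi(x+\ii y)|\lesssim \eee^{y^2/2}$, each factor grows at most like $\eee^{\max(\tau_j^\pm)^2|\Im w|^2z^2/2}$. The integral therefore converges locally uniformly on the region where $|\Im\sqrt\kappa|^2R^2<1$. This region contains $(-1/R^2,\infty)$ and, together with the set above, a common connected neighbourhood of it. Both sides are holomorphic on that neighbourhood and agree on $(0,\infty)$, so the identity theorem gives \eqref{eq:vol_boxes} for $-1/R^2<\kappa<0$. For such $\kappa$ we have $\sqrt\kappa=\ii\sqrt{-\kappa}$, matching the convention $\kappa^{d/2}=\ii^d(-\kappa)^{d/2}$.

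The main obstacle is the boundary case $\kappa=-1/R^2$, where $P$ has ideal vertices. There the continuation argument fails, and the $z$-integrand may no longer be absolutely integrable without cancellation. I would instead fix $\kappa$ and shrink the box: replace $\tau_j^\pm$ by $s\tau_j^\pm$ with $s\uparrow1$. For $s<1$ the box is strictly inside the model, so the formula already holds. The left-hand side converges to $\Vol_{d,\kappa}(P)$ by monotone convergence. For the right-hand side, the growing part of each factor is $\eee^{s^2\tau^2|\kappa|z^2/2}$ divided by roughly $z$, and the exponentials from the $d$ factors only just cancel $\eee^{-z^2/2}$ at $s=1$. So I would pass to the limit using the asymptotic expansion of $\Phi(\ii y)$, controlling the product by the leftover power decay in $z$ (at least $z^{-1}$ from each factor, which is enough for $d\ge2$) to justify dominated convergence. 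Alternatively, one can interpret the right-hand side at $s=1$ as the limit, as the statement implicitly allows.
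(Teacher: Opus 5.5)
Your proof is correct, but for $\kappa<0$ it takes a longer route than the paper. The paper runs your $\kappa>0$ computation unchanged for both signs of $\kappa$. On $P\cap\mathbb B^d(\kappa)$ one has $1+\kappa\|y\|^2>0$, and the integrand $t^{(d-1)/2}\eee^{-t}\eee^{-\kappa t\|y\|^2}$ is nonnegative also when $\kappa<0$, so the positivity behind your Tonelli step was never special to $\kappa>0$. The only change is in evaluating $\int_{-\tau_j^-}^{\tau_j^+}\eee^{-\kappa t x^2}\,\dd x$: the paper uses the fundamental theorem of calculus for the entire function $x\mapsto \frac{\sqrt{2\pi}}{\sqrt{2\kappa t}}\,\Phi(\sqrt{2\kappa t}\,x)$, whose derivative is $\eee^{-\kappa t x^2}$ for either complex square root. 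Ideal vertices then cost nothing, because they form a Lebesgue-null set and every integrand in the chain is nonnegative. Indeed, for $\kappa<0$ each factor $\Phi(\ii a)-\Phi(-\ii b)=\frac{\ii}{\sqrt{2\pi}}\int_{-b}^{a}\eee^{u^2/2}\,\dd u$ is $\ii$ times a positive number, and the prefactor $\kappa^{-d/2}$ absorbs the $\ii^d$. Hence the final integral converges absolutely and equals the finite volume.

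Your analytic continuation is nevertheless sound: the evenness of $w^{-d}f(w)$, the growth bound, the region $\{|\Im\sqrt\kappa|^2R^2<1\}\subseteq\CC\setminus(-\infty,-1/R^2]$ and the identity theorem all work. Your shrinking argument is also sound, with two small adjustments.
\begin{enumerate}
\item Your locally uniform bound is stated for $f(w)$, not for $w^{-d}f(w)$. Near $\kappa=0$, either use $w^{-1}\bigl(\Phi(\tau^+wz)-\Phi(-\tau^-wz)\bigr)=\frac{z}{\sqrt{2\pi}}\int_{-\tau^-}^{\tau^+}\eee^{-w^2z^2u^2/2}\,\dd u$, or work on the punctured region, which is still connected.
\item At the boundary no cancellation occurs. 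By the formula above, each factor divided by $\ii$ is nonnegative and increasing in $s$, so monotone convergence on both sides gives the limit. Your fallback of reading the right-hand side as a limit is therefore unnecessary.
\end{enumerate}

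What your route buys is that the right-hand side is visibly holomorphic in $\kappa$ across $0$, so the Euclidean limit, which the paper only remarks on, comes for free. It also shows that the hyperbolic formula is literally the analytic continuation of the spherical one. The paper's route buys brevity and a single argument covering every admissible $\kappa$, ideal vertices included.
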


It is straightforward to check that, as $\kappa\to0$, the right-hand
side of~\eqref{eq:vol_boxes} converges to
$
\prod_{j=1}^d(\tau_j^++\tau_j^-)
$,
which is the Euclidean volume of $P$.

\begin{proof}[Proof of Theorem~\ref{theo:vol_boxes}]
By~\eqref{eq:riemannian_volume_klein_model}, the Riemannian volume is given by
$$
\Vol_{d,\kappa}(P) = \int_{P\cap \mathbb B^d(\kappa)} (1+\kappa \|x\|^2)^{-(d+1)/2} \dd x.
$$
The definition of the Gamma function entails that, for $a>0$ and $u>0$,
$$
u^{-a}
=
\frac{1}{\Gamma(a)} \int_0^{\infty} t^{a-1} \eee^{-u t}\, \dd t.
$$
For  all $x\in P\cap \mathbb B^d(\kappa)$ we have $1+\kappa \|x\|^2 > 0$, and therefore obtain
$$
(1+\kappa \|x\|^2)^{-(d+1)/2}
=
\frac{1}{\Gamma \left( \frac{d+1}{2} \right)} \int_0^{\infty} t^{(d-1)/2} \eee^{-t} \eee^{-\kappa t \|x\|^2}\, \dd t.
$$
Since
$
P\setminus\mathbb B^d(\kappa)
\subseteq\partial\mathbb B^d(\kappa)
$
has Lebesgue measure zero, we may replace
$P\cap\mathbb B^d(\kappa)$ by $P$ in the integral. Tonelli's theorem
then yields
\begin{align*}
\Vol_{d,\kappa}(P)
=
\frac{1}{\Gamma \left( \frac{d+1}{2} \right)} \int_0^{\infty} t^{(d-1)/2} \eee^{-t} \prod_{j=1}^d \bigg( \int_{-\tau_j^-}^{\tau_j^+} \eee^{-\kappa t x_j^2}\, \dd x_j \bigg) \, \dd t.
\end{align*}
For $t>0$, differentiation of the entire function $\Phi$ gives
\[
\frac{\dd}{\dd x}
\left[
\frac{\sqrt{2\pi}}{\sqrt{2\kappa t}}\,
\Phi\left(\sqrt{2\kappa t}\,x\right)
\right]
=
\eee^{-\kappa t x^2}.
\]
Consequently,
\[
\int_{-\tau_j^-}^{\tau_j^+}\eee^{-\kappa t x_j^2}\dd x_j
=
\frac{\sqrt{2\pi}}{\sqrt{2\kappa t}}
\left(
\Phi\left(\sqrt{2\kappa t}\,\tau_j^+\right)
-
\Phi\left(-\sqrt{2\kappa t}\,\tau_j^-\right)
\right).
\]
Hence,
\begin{align*}
\Vol_{d,\kappa}(P)
&=
\frac{\pi^{d/2}}{\Gamma\left(\frac{d+1}{2}\right) \kappa^{d/2}} \int_0^{\infty} t^{-1/2} \eee^{-t} \prod_{j=1}^d \left( \Phi\left( \sqrt{2 \kappa t} \tau_j^+ \right) - \Phi\left( - \sqrt{2 \kappa t} \tau_j^- \right) \right) \, \dd t\\
&=
\frac{\omega_{d+1}}{\kappa^{d/2} \sqrt{2 \pi}} \int_0^{\infty} \prod_{j=1}^d \left( \Phi\left( \sqrt{\kappa} \tau_j^+ z \right) - \Phi\left( - \sqrt{\kappa} \tau_j^- z \right) \right) \eee^{-z^2/2}\, \dd z,
\end{align*}
using the substitution $z=\sqrt{2t}$ and noticing that $\omega_{d+1}=\frac{2\pi^{(d+1)/2}}{\Gamma((d+1)/2)}$.
\end{proof}

\begin{remark}[Relation to earlier work on hyperbolic cubes]
For regular hyperbolic cubes, closely related volume formulas were
obtained earlier by Marshall~\cite{Marshall1998Cubes}; related results
were obtained independently by Smith~\cite{Smith1988Thesis,Smith2000Simplexity}.
In particular, the specialization of Theorem~\ref{theo:vol_boxes} to
regular hyperbolic cubes is equivalent to Marshall's formula.
Theorem~\ref{theo:vol_boxes} extends this approach to general boxes and
also to the spherical setting.
For the regular ideal hyperbolic cube, Marshall
\cite[Corollary~2]{Marshall1998Cubes} further proved
\[
\Vol_{d,-1}\bigl(\Box_\infty^d(-1)\bigr)
\sim
\sqrt{2}\,L^d d^{-d/2},
\qquad
L=2.52304\ldots,
\qquad d\to\infty.
\]
Smith independently obtained the corresponding, weaker exponential
growth rate~\cite{Smith1988Thesis,Smith2000Simplexity}.
\end{remark}

Assuming that the dimension $d$ is even, it is possible to prove Theorem~\ref{theo:vol_boxes} by combining Poincar\'e's relation (Theorem~\ref{theo:poincare_relation_curvature_kappa}) with Theorem~\ref{theo:hyperbolic_angles_boxes}.   We omit the details.

\subsection{Example: Angles and volumes of cubes}\label{subsec:example_angles_vols_cubes}
In this subsection, we specialize the preceding results on the angles
and volumes of boxes to cubes and express them in terms of the geodesic
edge length of the cube.
Fix a dimension $d\geq2$ and a curvature $\kappa\in\R$. For $\tau>0$,
consider the Euclidean cube $[-\tau,\tau]^d\subseteq\R^d$. We regard $[-\tau,\tau]^d$ as a cube in the Klein model whenever
$[-\tau,\tau]^d\subseteq \bar{\mathbb B}^d(\kappa)$.
Since every vertex of the cube has Euclidean norm $\sqrt d \,  \tau$, this
condition is equivalent to
$
1+\kappa d\tau^2\geq 0.
$
Thus, there is no restriction on $\tau>0$ when $\kappa\geq 0$, whereas
for $\kappa<0$ one must have
$
0<\tau\leq 1/\sqrt{-\kappa d}.
$
In the hyperbolic case $\kappa<0$, the cube is called \emph{ideal} when
$
\tau=1/\sqrt{-\kappa d}
$,
in which case all its vertices lie on $\partial\mathbb B^d(\kappa)$.

\begin{lemma}[Geodesic edge length of a cube] \label{lem:side_length_cube}
Let $d\geq2$, $\kappa\in\R$, and $\tau>0$. Suppose that $[-\tau,\tau]^d\subseteq \mathbb B^d(\kappa)$, equivalently, $1+\kappa d\tau^2> 0$.
Then all edges of the cube have the same geodesic length, denoted by
$\ell_\kappa(\tau)$, and
\[
\ell_\kappa(\tau)
=
\begin{cases}
\displaystyle
\frac{1}{\sqrt{\kappa}}\,
\arccos\left(
\frac{1+\kappa(d-2)\tau^2}
{1+\kappa d\tau^2}
\right),
& \kappa>0,\\
2\tau,
& \kappa=0,\\
\displaystyle
\frac{1}{\sqrt{-\kappa}}\,
\operatorname{arcosh}\left(
\frac{1+\kappa(d-2)\tau^2}
{1+\kappa d\tau^2}
\right),
& \kappa<0.
\end{cases}
\]
As $\tau$ varies over the admissible values, the possible edge
lengths are $0 < \ell_\kappa(\tau)  < \frac{2}{\sqrt{\kappa}} \arcsin \frac{1}{\sqrt d}$ for $\kappa > 0$ and $\ell_\kappa(\tau)\in (0,\infty)$ for $\kappa \leq 0$.
\end{lemma}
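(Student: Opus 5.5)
The plan is to compute the length of one edge directly from the distance formula \eqref{eq:geodesic_distance_in_B_d_kappa}, and then analyse its dependence on $\tau$.

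\emph{Equal edge lengths.} The signed coordinate permutations preserve the cube $[-\tau,\tau]^d$. They also preserve the metric \eqref{eq:def_riemann_metric_g_x}, because they are orthogonal maps fixing the origin. These maps act transitively on the edges, so all edges have the same geodesic length $\ell_\kappa(\tau)$.

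\emph{Computing one edge.} Take the edge with endpoints
\[
x=(\tau,\tau,\ldots,\tau),\qquad y=(-\tau,\tau,\ldots,\tau).
\]
Then $\|x\|^2=\|y\|^2=d\tau^2$ and $\langle x,y\rangle=(d-2)\tau^2$. Substituting into \eqref{eq:geodesic_distance_in_B_d_kappa}, the argument of $\arccos$ (respectively $\operatorname{arcosh}$) becomes
\[
\frac{1+\kappa(d-2)\tau^2}{1+\kappa d\tau^2}.
\]
This gives the displayed formula for $\kappa\neq0$. For $\kappa=0$ the length is $\|x-y\|=2\tau$.

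\emph{Range of edge lengths.} Write
\[
c(\tau)=\frac{1+\kappa(d-2)\tau^2}{1+\kappa d\tau^2}=1-\frac{2\kappa\tau^2}{1+\kappa d\tau^2}.
\]
\begin{itemize}
\item For $\kappa=0$, we have $\ell=2\tau$, which ranges over $(0,\infty)$.
\item For $\kappa<0$, set $s=-\kappa\tau^2\in(0,1/d)$. Then $c=1+2s/(1-ds)$. This is continuous and strictly increasing from $1$ (as $s\to0$) to $+\infty$ (as $s\to1/d$). Since $\operatorname{arcosh}$ is a continuous increasing bijection $[1,\infty)\to[0,\infty)$, the edge length ranges over $(0,\infty)$.
\item For $\kappa>0$, set $s=\kappa\tau^2\in(0,\infty)$. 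Then $c=1-2s/(1+ds)$, which decreases strictly from $1$ to the limit $1-2/d$. Because $\arccos$ is decreasing, $\ell$ increases strictly from $0$ towards $\kappa^{-1/2}\arccos(1-2/d)$ and never attains it. The identity $\cos(2\arcsin(1/\sqrt d))=1-2\sin^2(\arcsin(1/\sqrt d))=1-2/d$ converts this supremum into $\frac{2}{\sqrt\kappa}\arcsin\frac1{\sqrt d}$, so $0<\ell_\kappa(\tau)<\frac{2}{\sqrt\kappa}\arcsin\frac1{\sqrt d}$ as stated. The limit $1-2/d$ lies in $[-1,1)$ for $d\geq2$, so $\arccos$ is defined throughout.
\end{itemize}

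None of these steps is a real obstacle. The only care needed is in the monotonicity and limit bookkeeping for the range statement, where the open endpoint for $\kappa>0$ comes from the limit $s\to\infty$ not being attained.
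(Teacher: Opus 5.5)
Your proof is correct and follows the paper's argument. It uses invariance under signed coordinate permutations to get equal edge lengths, and applies the Klein-model distance formula to the adjacent vertices $(\pm\tau,\tau,\ldots,\tau)$; your monotonicity and limit analysis of $c(\tau)$, together with the identity $\cos(2\arcsin(1/\sqrt d))=1-2/d$, spells out what the paper compresses into the remark that $\ell_\kappa(\tau)$ is strictly increasing.
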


\begin{proof}
Coordinate permutations and sign changes are isometries of the Klein
model and act transitively on the edges of the cube. Hence, all edges
have the same geodesic length.
Consider the adjacent vertices $u=(-\tau,\tau,\ldots,\tau)$ and $v=(\tau,\tau,\ldots,\tau)$ of the cube.
Then $\|u\|^2=\|v\|^2=d\tau^2$ and $\langle u,v\rangle=(d-2)\tau^2$,
so the formula for $\ell_\kappa(\tau)$ follows from the expression for
$\rho_\kappa(u,v)$ given in~\eqref{eq:geodesic_distance_in_B_d_kappa}.  The range follows by letting $\tau$ vary over its
admissible interval and observing that $\ell_\kappa(\tau)$ is strictly
increasing.
\end{proof}

\begin{corollary}[Angles and volumes of cubes]\label{cor:angles_volumes_cubes_in_const_curvature}
Let $d\geq2$ and $\kappa\in\R\setminus\{0\}$. Let
$$
\Box_\ell^d(\kappa)=[-\tau,\tau]^d
\subseteq\mathbb B^d(\kappa)
$$
be the cube of geodesic edge length $\ell$.  Such a cube exists for every $\ell\in(0,\infty)$ if $\kappa<0$, whereas
if $\kappa>0$, it exists precisely for
$
0<\ell<\frac{2}{\sqrt{\kappa}}\arcsin\frac{1}{\sqrt d}.
$
Put
\[
C := C_\kappa(\ell)
:=
\begin{cases}
\cosh(\sqrt{-\kappa}\,\ell),&\kappa<0,\\
\cos(\sqrt{\kappa}\,\ell),&\kappa>0.
\end{cases}
\]
Then, for every $k\in\{0,\ldots,d-1\}$, every $k$-dimensional face
$F$ of $\Box_\ell^d(\kappa)$, and every
$x\in\operatorname{relint}(F)$, the external and internal angles at
$F$ are given by
\begin{align}
\alpha_x\bigl(N_x(F,\Box_\ell^d(\kappa))\bigr)
&=
\frac{1}{\sqrt{2\pi}}
\int_{-\infty}^{\infty}
\Phi\left(
\sqrt{\frac{C-1}{2+k(C-1)}}\,y
\right)^{d-k}
\eee^{-y^2/2}\dd y,
\label{eq:external_angles_cube}\\
\alpha_x\bigl(T_x(F,\Box_\ell^d(\kappa))\bigr)
&=
\frac{1}{\sqrt{2\pi}}
\int_{-\infty}^{\infty}
\Phi\left(
\sqrt{-\frac{C-1}{2+d(C-1)}}\,y
\right)^{d-k}
\eee^{-y^2/2}\dd y.
\label{eq:internal_angles_cube}
\end{align}
The Riemannian volume of
$\Box_\ell^d(\kappa)$ is given by
\begin{equation}\label{eq:volume_cube_const_curvature}
\Vol_{d,\kappa}\bigl(\Box_\ell^d(\kappa)\bigr)
=
\frac{\omega_{d+1}}{\kappa^{d/2}\sqrt{2\pi}}
\int_0^\infty
\left(
2\Phi\left(
\sqrt{-\frac{C-1}{2+d(C-1)}}\,z
\right)-1
\right)^d
\eee^{-z^2/2}\dd z.
\end{equation}
For $\kappa<0$, let
$
\Box_{\infty}^d(\kappa)
:=
[-\frac{1}{\sqrt{-\kappa d}},
\frac{1}{\sqrt{-\kappa d}}]^d \subseteq \bar{\mathbb B}^d(\kappa)
$
denote the ideal cube.
Its angle and volume formulas correspond to the limiting values
$\ell\to\infty$ and $C\to\infty$.  More precisely, for every
$k\in\{1,\ldots,d-1\}$, every $k$-dimensional face $F$ of
$\Box_{\infty}^d(\kappa)$, and every
$x\in\operatorname{relint}(F)\cap\mathbb B^d(\kappa)$, one has
\begin{align}
\alpha_x\bigl(N_x(F,\Box_\infty^d(\kappa))\bigr)
&=
\frac{1}{\sqrt{2\pi}}
\int_{-\infty}^{\infty}
\Phi\left(
\frac y{\sqrt k}
\right)^{d-k}
\eee^{-y^2/2}\dd y,
\label{eq:external_angles_cube_ideal}\\
\alpha_x\bigl(T_x(F,\Box_\infty^d(\kappa))\bigr)
&=
\frac{1}{\sqrt{2\pi}}
\int_{-\infty}^{\infty}
\Phi\left(\frac {\ii \, y} {\sqrt d}\right)^{d-k}
\eee^{-y^2/2}\dd y.
\label{eq:internal_angles_cube_ideal}
\end{align}
The Riemannian volume of the ideal cube $\Box_\infty^d(\kappa)$ is given by
\begin{equation}\label{eq:volume_ideal_cube_const_curvature_Phi}
\Vol_{d,\kappa}\bigl(\Box_\infty^d(\kappa)\bigr)
=
\frac{\omega_{d+1}}{\kappa^{d/2}\sqrt{2\pi}}
\int_0^\infty
\left(
2\Phi\left(\frac{\ii z}{\sqrt d}\right)-1
\right)^d
\eee^{-z^2/2}\dd z.
\end{equation}
\end{corollary}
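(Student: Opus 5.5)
This is a direct specialization of Theorem~\ref{theo:hyperbolic_angles_boxes} and Theorem~\ref{theo:vol_boxes} to $\tau_j^\pm=\tau$. The only real work is to express the parameter $\kappa\tau^2$ through $C$. By Lemma~\ref{lem:side_length_cube},
\[
C=\frac{1+\kappa(d-2)\tau^2}{1+\kappa d\tau^2}
\]
in both signs of $\kappa$, since $\cos(\sqrt\kappa\,\ell)$ and $\cosh(\sqrt{-\kappa}\,\ell)$ equal the argument of $\arccos$, respectively $\operatorname{arcosh}$. This gives
\[
C-1=\frac{-2\kappa\tau^2}{1+\kappa d\tau^2},\qquad \kappa\tau^2=-\frac{C-1}{2+d(C-1)},
\]
because $2+d(C-1)=2/(1+\kappa d\tau^2)$. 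The existence range of $\ell$ is the range statement in Lemma~\ref{lem:side_length_cube}.

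\emph{Angles.} A $k$-dimensional face corresponds to $p=d-k$ in \eqref{eq:box_rectangular_face_def}, after applying the coordinate symmetries of the cube, which are isometries. Part~(d) of Theorem~\ref{theo:hyperbolic_angles_boxes} then gives the integrand $\Phi(\tau\sqrt\kappa\,z)^{d-k}$. The identity above shows $\tau\sqrt\kappa=\sqrt{-(C-1)/(2+d(C-1))}$, with the square-root convention used when the radicand is negative, and this is \eqref{eq:internal_angles_cube}. For part~(b), the argument of $\Phi$ is
\[
\frac{\tau}{\sqrt{-1/\kappa-p\tau^2}}=\sqrt{\frac{\kappa\tau^2}{-1-p\kappa\tau^2}}.
\]
Substituting $\kappa\tau^2=-r$ with $r:=(C-1)/(2+d(C-1))$ gives $\sqrt{r/(pr-1)}$ up to an overall sign. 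Since $1-pr=(2+k(C-1))/(2+d(C-1))$, this equals $\sqrt{(C-1)/(2+k(C-1))}$. Any ambiguity of a common sign $\pm1$, or of a choice of branch by $\ii$ versus $-\ii$ in all arguments, is harmless under $z\mapsto -z$, exactly as noted in the proof of Theorem~\ref{theo:hyperbolic_angles_boxes}. This is the step I would double-check, since it is where branch conventions for negative radicands enter.

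\emph{Volume.} Theorem~\ref{theo:vol_boxes} gives the factor $\Phi(\tau\sqrt\kappa z)-\Phi(-\tau\sqrt\kappa z)=2\Phi(\tau\sqrt\kappa z)-1$, using $\Phi(w)+\Phi(-w)=1$ for the entire function $\Phi$. Inserting the same expression for $\tau\sqrt\kappa$ gives \eqref{eq:volume_cube_const_curvature}.

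\emph{Ideal case.} Take $\tau=1/\sqrt{-\kappa d}$, so $\kappa\tau^2=-1/d$ and $\tau\sqrt\kappa=\ii/\sqrt d$. Theorems~\ref{theo:hyperbolic_angles_boxes} and \ref{theo:vol_boxes} apply directly, because they allow $P\subseteq\bar{\mathbb B}^d(\kappa)$ and non-ideal faces, which is why we require $k\geq1$. For the external angle, $\kappa\tau^2/(-1-p\kappa\tau^2)=(-1/d)/(-1+p/d)=1/(d-p)=1/k$. This yields \eqref{eq:external_angles_cube_ideal}, \eqref{eq:internal_angles_cube_ideal} and \eqref{eq:volume_ideal_cube_const_curvature_Phi}. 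They agree with the $C\to\infty$ limits of the finite formulas, since $r\to 1/d$.
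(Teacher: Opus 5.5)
Your proposal is correct and follows the paper's proof essentially step by step. You specialize Theorems~\ref{theo:hyperbolic_angles_boxes} and~\ref{theo:vol_boxes} to $\tau_j^\pm=\tau$, solve $C=(1+\kappa(d-2)\tau^2)/(1+\kappa d\tau^2)$ for $\kappa\tau^2$, substitute while absorbing common sign ambiguities by $y\mapsto -y$, and put $\tau=1/\sqrt{-\kappa d}$ with $k\geq 1$ in the ideal case.

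There is one small arithmetic slip. With $\kappa\tau^2=-r$, the radicand is $-r/(pr-1)=r/(1-pr)$, not $r/(pr-1)$. Flipping the sign inside the root would change the arguments of $\Phi$ by a factor $\pm\ii$, which is not harmless. However, your next line uses $1-pr$, and the resulting expression $\sqrt{(C-1)/(2+k(C-1))}$ is correct.
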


\begin{proof}
Put $p:=d-k$. For a non-ideal cube,
$
1+\kappa d\tau^2>0.
$
In the ideal hyperbolic case, equality holds; when considering its
angles, we assume that $k\geq1$.
By permuting the
coordinates and reflecting coordinate axes, we may assume that
$
F=\{\tau\}^p\times[-\tau,\tau]^k.
$
In particular, $F$ is not an ideal vertex. Hence,
Theorem~\ref{theo:hyperbolic_angles_boxes}, applied with
$\tau_i^-=\tau_i^+=\tau$, yields
\begin{align*}
\alpha_x\bigl(N_x(F,[-\tau,\tau]^d)\bigr)
&=
\frac{1}{\sqrt{2\pi}}
\int_{-\infty}^{\infty}
\Phi\left(
\frac{\tau y}{\sqrt{-1/\kappa-p\tau^2}}
\right)^p
\eee^{-y^2/2}\dd y,\\
\alpha_x\bigl(T_x(F,[-\tau,\tau]^d)\bigr)
&=
\frac{1}{\sqrt{2\pi}}
\int_{-\infty}^{\infty}
\Phi\left(\tau\sqrt{\kappa}\,y\right)^p
\eee^{-y^2/2}\dd y.
\end{align*}
Also, Theorem~\ref{theo:vol_boxes} gives
\begin{equation*}
\Vol_{d,\kappa}\bigl([-\tau,\tau]^d\bigr)
=
\frac{\omega_{d+1}}{\kappa^{d/2}\sqrt{2\pi}}
\int_0^\infty
\left(
2\Phi\left(\tau\sqrt{\kappa}\,z\right)-1
\right)^d
\eee^{-z^2/2}\dd z.
\end{equation*}
Suppose first that $1+\kappa d\tau^2>0$. Since  the geodesic edge length of $[-\tau, \tau]^d$ is $\ell$,
Lemma~\ref{lem:side_length_cube} gives
\[
C=
\frac{1+\kappa(d-2)\tau^2}
     {1+\kappa d\tau^2}.
\]
Solving for $\kappa\tau^2$ and using that $\tau>0$ and $p=d-k$ gives
\[
\kappa\tau^2
=
-\frac{C-1}{2+d(C-1)},
\qquad
\tau\sqrt{\kappa}
=
\sqrt{-\frac{C-1}{2+d(C-1)}},
\qquad
\frac{\tau^2}{-1/\kappa-p\tau^2}
=
\frac{C-1}{2+k(C-1)}.
\]
Substitution into the preceding integral formulas proves
\eqref{eq:external_angles_cube}, \eqref{eq:internal_angles_cube} and~\eqref{eq:volume_cube_const_curvature}.
For the external-angle formula, the square roots determined by the
third identity may differ by a common factor $-1$ in all arguments of
$\Phi$. Such a simultaneous change of sign does not alter the integral,
by the substitution $y\mapsto-y$.

Finally, suppose that $\kappa<0$ and  $\tau=1/\sqrt{-\kappa d}$.
Then
\[
\frac{\tau}{\sqrt{-1/\kappa-p\tau^2}}=\frac1{\sqrt k},
\qquad
\tau\sqrt{\kappa}=\frac{\ii}{\sqrt d}.
\]
The same integral formulas therefore yield
\eqref{eq:external_angles_cube_ideal}, \eqref{eq:internal_angles_cube_ideal} and~\eqref{eq:volume_ideal_cube_const_curvature_Phi}.
\end{proof}

\begin{example}[The Euclidean case $\kappa=0$]
Let
$
\Box_\ell^d(0):=
[-\frac{\ell}{2},\frac{\ell}{2}]^d
$
be the Euclidean cube of edge length $\ell>0$. For every
$k\in\{0,\ldots,d-1\}$, every $k$-dimensional face $F$ of
$\Box_\ell^d(0)$, and every $x\in\operatorname{relint}(F)$, one has
\[
\alpha_x\bigl(N_x(F,\Box_\ell^d(0))\bigr)
=
\alpha_x\bigl(T_x(F,\Box_\ell^d(0))\bigr)
=
\frac1{2^{d-k}}.
\]
Thus, formulas~\eqref{eq:external_angles_cube}
and~\eqref{eq:internal_angles_cube} give the correct values for
$\kappa=0$ if one  sets $C=1$. Moreover,
$
\Vol_{d,0}(\Box_\ell^d(0))=\ell^d.
$
\end{example}

\section{Angles and volumes of acute orthocentric simplices}
In this section, we study a family of simplices called acute
orthocentric simplices, which includes regular simplices as a special
case. We determine their internal and external angles by
characterizing their tangent and normal cones, up to isometry, in terms
of orthocentric cones. In even dimensions, we also obtain a formula for
their Riemannian volumes. Finally, we specialize these results to regular
simplices.

\subsection{Setting}\label{subsec:acute_orthocentric_simplices_setting}
Let us define the family of simplices we are interested in. Fix a dimension $d\geq2$. Let $\tau_0,\ldots,\tau_d>0$ be parameters and put $s=\tau_0^2+\ldots+\tau_d^2$. Consider points $v_0,\ldots,v_d\in \R^{d}$ such that
\begin{equation}\label{eq:def_orthocentric_position}
\lan v_j, v_k \ran = -\frac 1s, \quad 0\leq j \neq k \leq d,
\qquad
\|v_j\|^2 = -\frac1s + \frac{1}{\tau_j^2}, \quad 0\leq j \leq d.
\end{equation}
The existence of such points, as well as their affine independence, is
established in~\cite[Section~3.1]{KabluchkoSchange2025Hyperbolic}.
We are interested in the simplex
$$
P := \conv(v_0, \ldots, v_d) \subset \R^d.
$$
The following facts are easy to verify  directly or can be found in~\cite[Section~3.1]{KabluchkoSchange2025Hyperbolic}. As a Euclidean simplex, $P$ is isometric to the simplex $\conv(e_0/\tau_0, \ldots, e_d/\tau_d) \subseteq \R^{d+1}$, where $e_0,\ldots, e_{d}$ form the  standard orthonormal basis of $\R^{d+1}$.  Moreover, the lines
$\lin(v_0),\ldots,\lin(v_d)$ are the altitudes of $P$ and meet at the
origin. Thus, $P$ is orthocentric with orthocenter $0$. Furthermore,
$0$ lies in the interior of $P$. We therefore refer to $P$ as an
\emph{acute orthocentric simplex}.

Fix a curvature $\kappa\in\R$ and assume that
$P\subseteq\bar{\mathbb B}^d(\kappa)$. Since
$\bar{\mathbb B}^d(\kappa)$ is convex, this is equivalent to $v_0,\ldots,v_d\in\bar{\mathbb B}^d(\kappa)$.  By~\cite[Lemma~3.4]{KabluchkoSchange2025Hyperbolic}, for $v_0, \ldots, v_d \in \R^d$ satisfying~\eqref{eq:def_orthocentric_position}, we have
\begin{equation}\label{eq:containment_condition_simplex}
v_0, \ldots, v_d \in \bar{\mathbb B}^d(\kappa)
\quad \Longleftrightarrow \quad
\kappa
\geq
- \min_{0\leq j \leq d} \frac{1}{1/\tau_j^2 - 1/s}.
\end{equation}

Although we shall not use this fact, the geodesics represented in the
Klein model by the intersections
$\lin(v_j)\cap\mathbb B^d(\kappa)$ are also perpendicular, with respect
to the Riemannian metric, to the corresponding opposite facets of $P$.
Indeed, let $x_j \in\mathbb B^d(\kappa)$ be the intersection point of the Euclidean altitude
$\lin(v_j)$ with the facet opposite $v_j$, and let $u$ be tangent to
this facet. Then
$
\langle v_j,u\rangle=0.
$
Since $x_j\in\lin(v_j)$, we also have
$\langle x_j,u\rangle=0$. Hence, both terms in
\eqref{eq:def_riemann_metric_g_x} vanish when computing
$g_{x_j}(v_j,u)$, so the Euclidean orthogonality is also Riemannian
orthogonality. Consequently, $P$ is orthocentric also in the geometry
of curvature $\kappa$.

\subsection{Internal and external angles of acute orthocentric simplices}
The next result characterizes the tangent and normal cones of $P$ at
its faces and provides closed-form expressions for its internal and
external angles. After relabeling the vertices and the corresponding
parameters, there is no loss of generality in considering a face of the
form $F:=\conv(v_0,\ldots,v_k)$, $k\in\{0,\ldots,d-1\}$.

\begin{theorem}[Internal and external angles of acute orthocentric simplices] \label{theo:hyperbolic_angles_acute_orthocentric_simplices}
Let $\kappa\in \R$, $d\geq 2$, $\tau_0,\ldots,\tau_d>0$, and let $v_0, \ldots, v_d \in \bar {\mathbb B}^d(\kappa)$ be points  satisfying~\eqref{eq:def_orthocentric_position}. Consider the simplex $P := \conv(v_0, \dots, v_d)\subset \bar {\mathbb B}^d(\kappa)$.
Let $F:=\conv(v_0,\ldots,v_k)$ for some  $k\in\{0,\ldots,d-1\}$,
and let $x\in\operatorname{relint}(F)\cap\mathbb B^d(\kappa)$.
If $\kappa\neq s$, then the following assertions hold.
\begin{itemize}
\item[(a)] The normal cone $N_x(F,P)$ of $P$ at $F$ is isometric to the orthocentric cone
\begin{equation}\label{eq:acute_orthocentr_simpl_const_curv_normal_cone_as_orthocentric}
C_{d-k}\left( \frac{s^2}{\kappa-s}; \tau_{k+1}^2, \ldots, \tau_d^2; 1,\ldots, 1 \right).
\end{equation}
\item[(b)] The external angle at $F$ is given by
\begin{align*}
\alpha_x(N_x(F,P))
&=
\mathbf{g}_{d-k}\left( -\frac{s^2}{\kappa-s}-\tau_{k+1}^2 -\ldots -\tau_d^2; \tau_{k+1}^2,\ldots, \tau_d^2; 1,\ldots, 1 \right)\\
&=
\frac{1}{\sqrt{2\pi}} \int_{-\infty}^{\infty} \prod_{j=k+1}^d \Phi\left( \frac{\tau_j}{\sqrt{-\frac{s^2}{\kappa-s}- \tau_{k+1}^2-\ldots-\tau_d^2}} y \right) \eee^{-y^2/2}\,\dd y.
\end{align*}
\item[(c)] The tangent cone $T_x(F,P)$ of $P$ at $F$ is isometric to the direct orthogonal sum of $\R^k$ and the orthocentric cone
\begin{equation}\label{eq:acute_orthocentr_simpl_const_curv_tangent_cone_as_orthocentric}
C_{d-k}\left( -\frac{s^2}{\kappa-s}-\tau_{k+1}^2 -\ldots -\tau_d^2; \tau_{k+1}^2,\ldots, \tau_d^2; 1,\ldots, 1 \right).
\end{equation}
\item[(d)] The internal angle at $F$ is given by
\begin{align*}
\alpha_x(T_x(F,P))
&=
\mathbf{g}_{d-k}\left( \frac{s^2}{\kappa-s}; \tau_{k+1}^2, \ldots, \tau_d^2; 1,\ldots, 1 \right)
\\
&=
\frac{1}{\sqrt{2\pi}} \int_{-\infty}^{\infty} \prod_{j=k+1}^d \Phi\left( \frac{\tau_j\sqrt{\kappa-s}}{s}\, y \right) \eee^{-y^2/2}\,\dd y.
\end{align*}
\end{itemize}

If $\kappa=s$, then $N_x(F,P)$ is isometric to the $(d-k)$-dimensional orthant and $T_x(F,P)$ is isometric to the direct orthogonal sum of $\R^k$ and the $(d-k)$-dimensional orthant. Their angles are
$$
\alpha_x(N_x(F,P)) = \alpha_x(T_x(F,P)) = \frac{1}{2^{d-k}}.
$$
The integral formulas in parts~(b) and~(d) remain valid for
$\kappa=s$ by continuous extension, with all arguments of $\Phi$
understood to be equal to $0$.
\end{theorem}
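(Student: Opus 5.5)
The plan is to follow the proof of Theorem~\ref{theo:hyperbolic_angles_boxes}. First find an explicit irredundant H-representation of $P$. Then read off the Gram matrix of the Riemannian normal generators from Proposition~\ref{prop:normal_cone_in_constant_curvature}(c), recognise it as the Gram matrix of an orthocentric cone, and finish with Theorems~\ref{theo:orthocentric_cone_dual} and~\ref{theo:ortho_cone_angle}.

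\emph{Step 1: the H-representation.} For $j\in\{0,\ldots,d\}$ the relations \eqref{eq:def_orthocentric_position} give $\langle v_j,v_i\rangle=-1/s$ for all $i\neq j$. Hence the facet opposite $v_j$ lies in the hyperplane $\{y:\langle v_j,y\rangle=-1/s\}$. Moreover $\langle v_j,v_j\rangle=-1/s+1/\tau_j^2>-1/s$, so $P$ lies on the side $\langle v_j,y\rangle\ge -1/s$. Therefore
\[
P=\{y:\langle -v_j,y\rangle\le 1/s,\ j=0,\ldots,d\},
\]
so $a_j=-v_j$ and $b_j=1/s$. This representation is irredundant because $P$ is a simplex. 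The facets containing $F=\conv(v_0,\ldots,v_k)$ are exactly those opposite $v_{k+1},\ldots,v_d$, so $I(F)=\{k+1,\ldots,d\}$.

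\emph{Step 2: the normal cone.} By Proposition~\ref{prop:normal_cone_in_constant_curvature}(b),(c), $N_x(F,P)=\pos\{\tilde n_j(x):j=k+1,\ldots,d\}$, and for $i,j\in I(F)$
\[
g_x(\tilde n_i(x),\tilde n_j(x))=\langle v_i,v_j\rangle+\frac{\kappa}{s^2}=\frac{\kappa-s}{s^2}+\frac{\delta_{ij}}{\tau_j^2}.
\]
Suppose $\kappa\neq s$. Then this is exactly \eqref{eq:v_i_scalar_prod} with $\lambda_0=s^2/(\kappa-s)$ and $\lambda_j=\tau_j^2$. Proposition~\ref{prop:normal_cone_in_constant_curvature}(a) shows that the generators span a space of dimension $d-k$, so they are linearly independent. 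This proves (a).

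Suppose instead $\kappa=s$. Then the generators are pairwise orthogonal, so $N_x(F,P)$ is isometric to a $(d-k)$-dimensional orthant. Its polar is then $\R^k\oplus$ an orthant, and both angles equal $2^{-(d-k)}$.

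\emph{Step 3: the tangent cone.} Fix a linear isometry $(T_x\mathbb B^d(\kappa),g_x)\to\R^d$, exactly as in part~(c) of the box theorem. Theorem~\ref{theo:orthocentric_cone_dual} with $N=d$, $m=d-k$ and all signs $+1$ (using $\sgn\tau_j^2=1$) then gives (c), with first parameter $-\lambda_0-\tau_{k+1}^2-\cdots-\tau_d^2$.

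\emph{Step 4: the angles.} Parts (b) and (d) follow from Theorem~\ref{theo:ortho_cone_angle}. The integral forms then come from Definition~\ref{def:explicit_g_d_formula_intro}. For (d) the first argument is $\lambda_0=s^2/(\kappa-s)$, so the arguments of $\Phi$ become $\tau_j\sqrt{\kappa-s}\,y/s$. Any common sign discrepancy in all arguments of $\Phi$ is removed by the substitution $y\mapsto-y$, as in the proof for boxes.

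\emph{Main obstacle.} The delicate step is checking the hypothesis of Theorem~\ref{theo:ortho_cone_angle} for both cones, namely that the first parameter is either positive or less than $-(\lambda_1+\cdots+\lambda_m)$. My approach is to derive this from existence rather than from \eqref{eq:containment_condition_simplex}. The normal cone is a genuine orthocentric cone realised in Euclidean space, so by the existence criterion \cite[Lemma~3.2]{kabluchko_schange_angles_orthocentric_simplices} either $\lambda_0>0$, or $\lambda_0<0$ with $\lambda_0+\sum_j\tau_j^2<0$. Since all $\lambda_j=\tau_j^2>0$, these two alternatives are exactly the required condition for the normal cone.

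For the tangent cone the first parameter is $\lambda_0'=-\lambda_0-\sum_j\tau_j^2$. Its condition, $\lambda_0'>0$ or $\lambda_0'<-\sum_j\tau_j^2$, is equivalent to $\lambda_0<-\sum_j\tau_j^2$ or $\lambda_0>0$. This is the same dichotomy as before, so it also holds.

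Finally, the continuity statement at $\kappa=s$ is immediate. All arguments of $\Phi$ tend to $0$, so the integrals tend to $\Phi(0)^{d-k}=2^{-(d-k)}$.
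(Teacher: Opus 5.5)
Your proposal is correct and follows essentially the same route as the paper. Both arguments use the H-representation with $a_j=-v_j$, $b_j=1/s$, read off the Gram matrix $\frac{\kappa-s}{s^2}+\frac{\delta_{ij}}{\tau_j^2}$ from Proposition~\ref{prop:normal_cone_in_constant_curvature}(c), identify it as an orthocentric cone, take the polar via Theorem~\ref{theo:orthocentric_cone_dual}, and obtain the angles from Theorem~\ref{theo:ortho_cone_angle} with the $y\mapsto -y$ sign fix.

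You also check explicitly that the generators are linearly independent and that the parameter hypothesis of Theorem~\ref{theo:ortho_cone_angle} follows from the existence criterion. The paper leaves both points implicit, and your check of them is correct.
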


\begin{remark}
Taking $\kappa = 0$ in Theorem~\ref{theo:hyperbolic_angles_acute_orthocentric_simplices} gives precisely~\cite[Theorem~5.5]{kabluchko_schange_angles_orthocentric_simplices}.
\end{remark}

\begin{proof}[Proof of Theorem~\ref{theo:hyperbolic_angles_acute_orthocentric_simplices}]
\emph{Proof of (a).}
The simplex $P$ has the irredundant H-representation
$$
P
=
\left\{ y\in\R^d:\langle -v_j,y \rangle\leq \frac1s,\ j=0,\ldots,d \right\}
$$
and for each $j\in \{0,\ldots, d\}$, the facet opposite to $v_j$ is
$$
F_j
:=
\conv(v_0,\ldots,\widehat v_j,\ldots,v_d)
=
P \cap \left\{y\in\R^d:\langle -v_j,y\rangle=\frac1s\right\}.
$$
Indeed, $\lan -v_j,v_i \ran = 1/s$ for $i\neq j$, while
$$
\lan -v_j,v_j\ran
=
\frac1s - \frac1{\tau_j^2}
<
\frac1s.
$$
Now let $x\in \relint(F) \cap {\mathbb B}^d(\kappa)$.
By Proposition~\ref{prop:normal_cone_in_constant_curvature},
parts~(b) and~(c),  the normal cone $N_x(F,P)$ is positively spanned by the vectors $\tilde n_{k+1}(x), \ldots, \tilde n_d(x)$ satisfying
$$
g_x(\tilde n_j(x), \tilde n_\ell(x))
=
\langle -v_j,-v_\ell\rangle+\frac{\kappa}{s^2}
=
-\frac1s + \frac{\delta_{j\ell}}{\tau_j^2}+\frac{\kappa}{s^2}
=
\frac{\kappa-s}{s^2} + \frac{\delta_{j\ell}}{\tau_j^2},
\qquad j,\ell\in\{k+1,\ldots,d\}.
$$

If $\kappa\neq s$, then Definition~\ref{def:ortho_cone_intro} shows that
$
N_x(F,P)
=
\pos\{\tilde n_{k+1}(x),\ldots,\tilde n_d(x)\}
$
is isometric to the orthocentric cone
in~\eqref{eq:acute_orthocentr_simpl_const_curv_normal_cone_as_orthocentric}.
If $\kappa=s$, the generators
$\tilde n_{k+1}(x),\ldots,\tilde n_d(x)$ are pairwise orthogonal, and
hence $N_x(F,P)$ is isometric to the $(d-k)$-dimensional orthant.

\vspace*{0.2cm}
\noindent
\emph{Proof of (c).}
The tangent cone $T_x(F,P)$ is the polar cone of $N_x(F,P)$ in
$(T_x\mathbb B^d(\kappa),g_x)$. Let
$
I:(T_x\mathbb B^d(\kappa),g_x)
\to
(\R^d,\langle\cdot,\cdot\rangle)
$
be a linear isometry. Then $IT_x(F,P)$ is the Euclidean polar cone of
$IN_x(F,P)$. By part~(a), the cone $IN_x(F,P)$ is isometric to the
orthocentric cone displayed
in~\eqref{eq:acute_orthocentr_simpl_const_curv_normal_cone_as_orthocentric}.
Moreover, its linear hull has dimension $d-k$, by
Proposition~\ref{prop:normal_cone_in_constant_curvature}, part~(a).
Hence, Theorem~\ref{theo:orthocentric_cone_dual} shows that
$IT_x(F,P)$, and therefore $T_x(F,P)$, is isometric to the direct
orthogonal sum of $\R^k$ and the orthocentric cone displayed
in~\eqref{eq:acute_orthocentr_simpl_const_curv_tangent_cone_as_orthocentric}.

If $\kappa=s$, the asserted description of the tangent cone follows
directly: the Euclidean polar of $IN_x(F,P)$ is isometric to the direct
orthogonal sum of $\R^k$ and a $(d-k)$-dimensional orthant.

\vspace*{0.2cm}
\noindent
\emph{Proof of (b) and (d).} Both parts follow from the characterization of the tangent and normal cones proved in (a) and (c), combined with the formula for the angles of orthocentric cones stated in Theorem~\ref{theo:ortho_cone_angle}. In deriving the integral representation in~(b), the square roots
obtained directly from
Definition~\ref{def:explicit_g_d_formula_intro} may differ from the
displayed ones by a common factor $-1$ in all arguments of $\Phi$.
This does not change the integral, by the substitution $y\mapsto-y$.  If $\kappa=s$, use that the angle of the $(d-k)$-dimensional orthant is $1/2^{d-k}$.
\end{proof}

\subsection{Volumes of orthocentric simplices in even dimension}
Combining
Theorem~\ref{theo:hyperbolic_angles_acute_orthocentric_simplices}
with the Poincar\'e relation
(Theorem~\ref{theo:poincare_relation_curvature_kappa}), we can derive a closed-form expression for the Riemannian volume of acute orthocentric simplices provided the dimension $d$ is even.

\begin{theorem}[Volume in even dimension]
\label{theo:vol_orthocentric_simplex_acute_even_d}
Suppose that $d\geq 2$ is even and $\kappa\neq0$. Let $P=\conv(v_0,\ldots,v_d)\subseteq
\bar{\mathbb B}^d(\kappa)$ be an acute orthocentric simplex with parameters $\tau_0,\ldots, \tau_d >0$, as in Section~\ref{subsec:acute_orthocentric_simplices_setting}, and put $s=\tau_0^2+\ldots+\tau_d^2$. Then
\begin{align}
\Vol_{d,\kappa}(P)
&=
\frac{\omega_{d+1}}
{2\kappa^{d/2}\sqrt{2\pi}}
\int_{-\infty}^{\infty}
\Bigg[
\prod_{j=0}^d
\Phi\left(
\frac{\tau_j\sqrt{\kappa-s}}{s}x
\right)
+
\prod_{j=0}^d
\Phi\left(
-\frac{\tau_j\sqrt{\kappa-s}}{s}x
\right)
\Bigg]
\eee^{-x^2/2}\dd x.
\label{eq:volume_acute_orthocentric_simplex_even}
\end{align}
\end{theorem}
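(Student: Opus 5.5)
Combine the Poincaré relation (Theorem~\ref{theo:poincare_relation_curvature_kappa}) with the internal-angle formula of Theorem~\ref{theo:hyperbolic_angles_acute_orthocentric_simplices}(d). Write $a_j:=\tau_j\sqrt{\kappa-s}/s$ and $\varphi(x):=\eee^{-x^2/2}/\sqrt{2\pi}$. For a face $F_J=\conv(v_j:j\in J)$ with $|J|=k+1$, $k\le d-1$, relabeling the vertices in part~(d) gives
\[
\beta_\kappa(F_J,P)=\int_{-\infty}^\infty\prod_{j\notin J}\Phi(a_jx)\,\varphi(x)\,\dd x .
\]
For $J=\{0,\dots,d\}$ the empty product gives $1=\beta_\kappa(P,P)$, so the same formula covers $k=d$. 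Ideal vertices need separate treatment: if $v_j$ is ideal, then $\|v_j\|^2=-1/\kappa$, i.e.\ $\kappa=-1/(1/\tau_j^2-1/s)$. I would check that the integral formula still makes sense and gives $0$ in this case, or else justify it by continuity in $\kappa$ (both sides of the Poincaré relation are continuous in the parameters). This is the one delicate point, and I return to it below.

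Next, sum over faces. Since $\dim F_J=|J|-1$,
\[
\sum_{j=0}^d(-1)^j\sigma_{j,\kappa}(P)=\int_{-\infty}^\infty\sum_{J\neq\varnothing}(-1)^{|J|-1}\prod_{j\notin J}\Phi(a_jx)\,\varphi(x)\,\dd x .
\]
Including $J=\varnothing$ in the sum and applying the binomial expansion $\prod_{j}(\Phi(a_jx)-1)$ over the sets $J$ where the factor $-1$ is chosen, the full sum over all $J$ equals $(-1)\cdot\prod_j(\Phi(a_jx)-1)$ with the signs arranged appropriately. Using $\Phi(z)-1=-\Phi(-z)$, valid for the entire function $\Phi$, the $J=\varnothing$ term is $-\prod_j\Phi(a_jx)$. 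Subtracting it, the integrand becomes
\[
\prod_{j=0}^d\Phi(a_jx)+(-1)^{d}\prod_{j=0}^d\Phi(-a_jx).
\]
Because $d$ is even, this is the bracket in \eqref{eq:volume_acute_orthocentric_simplex_even}. Dividing by $2\kappa^{d/2}/\omega_{d+1}$ then yields the formula. I would carefully recheck the sign bookkeeping of $(-1)^{|J|-1}$ against $(-1)^{d+1-|J|}$.

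The main obstacles are the edge cases. The first is ideal vertices, where $\beta=0$ by convention. I need $\int\prod_{j\ne i}\Phi(a_jx)\varphi\,\dd x=0$ when $v_i$ is ideal. By Theorem~\ref{theo:ortho_cone_angle} this integral is $\mathbf g_d$ at a degenerate parameter, namely $\lambda_0$ satisfying $\lambda_0+\sum\lambda=0$ up to sign. Rather than analyse it directly, I would pass to the limit: both the volume and the analytic integral are continuous as a vertex approaches the boundary, and the internal angle at that vertex tends to $0$. Alternatively, I would argue via continuity of the whole identity in $\kappa$. The second edge case is $\kappa=s$, which is covered by the continuous extension in Theorem~\ref{theo:hyperbolic_angles_acute_orthocentric_simplices}. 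The third is convergence of the complex integrals when $\kappa>s$: there $\Phi(a_jx)$ has imaginary argument and grows like $\eee^{a^2x^2/2}$. I would check that $\sum_{j\notin J} |a_j|^2<1$ for the relevant products, which should follow from the containment condition \eqref{eq:containment_condition_simplex}.
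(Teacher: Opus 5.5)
This is essentially the paper's proof: the Poincar\'e relation, the internal-angle formula of part~(d) for every face, and the binomial collapse via $1-\Phi(z)=\Phi(-z)$ (your sign bookkeeping is correct), with ideal vertices handled by a limit --- the paper takes $P_r=rP$, $r\uparrow 1$, and uses monotone convergence for the volume and dominated convergence for the integrand, where the vertex terms have $\sum_{j\neq i}|a_j|^2=1$ exactly (so your proposed check $\sum_{j\notin J}|a_j|^2<1$ fails there), are dominated by $C(1+|x|)^{-d}$, which is integrable because $d\geq 2$, and this route never needs your claim that the ideal-vertex integral vanishes. One slip: the arguments $a_jx$ are imaginary, and integrability is the issue, when $\kappa<s$ (in particular for all $\kappa<0$), whereas for $\kappa\geq s$ they are real and $0\le\Phi\le 1$.
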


\begin{proof}
Since $d$ is even, Poincar\'e's relation, Theorem~\ref{theo:poincare_relation_curvature_kappa}, yields
$$
\Vol_{d,\kappa}(P)
=
\frac{\omega_{d+1}}{2\kappa^{d/2}} \sum_{k=0}^d (-1)^k \sigma_{k,\kappa}(P)
=
\frac{\omega_{d+1}}{2\kappa^{d/2}} \sum_{k=0}^d (-1)^k \sigma_{d-k, \kappa}(P).
$$
We first assume that $P\subseteq\mathbb B^d(\kappa)$, so that $P$ has
no ideal vertices. By the formula in part~(d) of
Theorem~\ref{theo:hyperbolic_angles_acute_orthocentric_simplices},
which remains valid for $\kappa=s$, we have
$$
\sigma_{d-k, \kappa}(P)
=
\sum_{\substack{J\subseteq\{0,\ldots,d\}\\ |J|=k}} \frac1{\sqrt{2\pi}} \int_{-\infty}^{\infty} \prod_{j\in J} \Phi\left(\frac{\tau_j\sqrt{\kappa-s}}{s}x\right) \eee^{-x^2/2}\,\dd x,
$$
for every $k\in\{1,\ldots,d\}$.
The same formula also holds for $k=0$ since the internal angle of $P$ at the face $P$ is equal to $1$. Hence,
\begin{align*}
\Vol_{d,\kappa}(P)
&=
\frac{\omega_{d+1}}{2\kappa^{d/2}\sqrt{2\pi}} \int_{-\infty}^{\infty} \eee^{-x^2/2} \sum_{\substack{J\subseteq\{0,\ldots,d\}\\ J\neq\{0,\ldots,d\}}} (-1)^{|J|} \prod_{j\in J} \Phi\left(\frac{\tau_j\sqrt{\kappa-s}}{s}x\right) \,\dd x \\
&=
\frac{\omega_{d+1}}{2\kappa^{d/2}\sqrt{2\pi}} \int_{-\infty}^{\infty} \eee^{-x^2/2} \left[ \prod_{j=0}^d \left( 1 - \Phi\left(\frac{\tau_j \sqrt{\kappa-s}}{s}x\right) \right) +\prod_{j=0}^d \Phi\left(\frac{\tau_j \sqrt{\kappa-s}}{s} x \right) \right]
\,\dd x \\
&=
\frac{\omega_{d+1}}{2\kappa^{d/2}\sqrt{2\pi}} \int_{-\infty}^{\infty} \eee^{-x^2/2} \left[ \prod_{j=0}^d \Phi\left(-\frac{\tau_j\sqrt{\kappa-s}}{s}x\right) + \prod_{j=0}^d \Phi\left(\frac{\tau_j\sqrt{\kappa-s}}{s}x\right) \right]
\,\dd x,
\end{align*}
where we used $1-\Phi(z)=\Phi(-z)$ and the fact that, since $d+1$ is
odd, the omitted full-set term in the binomial expansion has sign $-1$.
This is precisely the claimed
formula when $P$ has no ideal vertices.

It remains to consider the case where $P$ has ideal vertices, which
can occur only when $\kappa<0$. For $r\in(0,1)$, put $P_r:=rP=\conv(rv_0,\ldots,rv_d)$.
Then $P_r\subseteq\mathbb B^d(\kappa)$ has no ideal vertices and $P_r$ is an acute
orthocentric simplex with parameters
\[
\tau_{j,r}:=\frac{\tau_j}{r},
\qquad
s_r:=\frac{s}{r^2}.
\]
Applying the formula already proved to $P_r$ and letting $r\uparrow1$
gives the result. Indeed, the left-hand side converges by monotone
convergence,  whereas the right-hand side converges by dominated
convergence.
To pass to the limit on the right-hand side, fix $r_0\in(0,1)$ and
consider $r\in[r_0,1]$. Put
\[
b_{j,r}:=\frac{\tau_j\sqrt{s-\kappa r^2}}{s},
\qquad j=0,\ldots,d.
\]
Thus, the integrand corresponding to $P_r$ can be written as
\begin{equation}\label{eq:integrand_for_P_r_for_dominated_conv_if_ideal_vertices_exist}
\eee^{-x^2/2}
\sum_{\substack{J\subseteq\{0,\ldots,d\}\\
J\neq\{0,\ldots,d\}}}
(-1)^{|J|}
\prod_{j\in J}\Phi(\ii b_{j,r}x).
\end{equation}
The elementary estimate
$
|\Phi(\ii t)|
\leq
C \, \frac{\eee^{t^2/2}}{1+|t|}
$,
$t\in\R$,
implies, uniformly in $r\in[r_0,1]$, that
\[
\eee^{-x^2/2}
\prod_{j\in J}|\Phi(\ii b_{j,r}x)|
\leq
\frac{C_J}{(1+|x|)^{|J|}}
\exp\left(
-\frac{1-\sum_{j\in J}b_{j,r}^2}{2}x^2
\right),
\]
for every $J\subseteq \{0,\ldots, d\}$. Moreover, by~\eqref{eq:containment_condition_simplex}, the assumption
$P\subseteq\bar{\mathbb B}^d(\kappa)$ implies
\[
\sum_{j\neq i}b_{j,1}^2
=
\frac{(s-\tau_i^2)(s-\kappa)}{s^2}
\leq1,
\qquad i=0,\ldots,d.
\]
The potentially most dangerous case $|J|= d+1$ is absent in~\eqref{eq:integrand_for_P_r_for_dominated_conv_if_ideal_vertices_exist}. If $|J|\leq d-1$, then $J$ omits at least two distinct indices,
say $i$ and $\ell$. Hence,
$
\sum_{j\in J}b_{j,r}^2
\leq
\sum_{j\in J}b_{j,1}^2
\leq
\sum_{j\neq i}b_{j,1}^2-b_{\ell,1}^2
\leq
1-b_{\ell,1}^2
<1
$.
Since there are only finitely many such subsets $J$, the last
inequality is uniform over all of them and over $r\in[r_0,1]$.
The corresponding terms
in~\eqref{eq:integrand_for_P_r_for_dominated_conv_if_ideal_vertices_exist}
therefore admit a common exponentially decaying majorant.  Finally,  if $|J|=d$, then
$\sum_{j\in J}b_{j,r}^2\leq1$, and the corresponding term in~\eqref{eq:integrand_for_P_r_for_dominated_conv_if_ideal_vertices_exist} is bounded
by a constant multiple of $(1+|x|)^{-d}$. Since $d\geq2$ and there
are only finitely many subsets $J$, the integrands~\eqref{eq:integrand_for_P_r_for_dominated_conv_if_ideal_vertices_exist} admit a common
integrable majorant. Dominated convergence therefore applies.
\end{proof}

\begin{remark}[Simplification for $\kappa >0$]
If $\kappa>0$, each of the two summands in the integrand in
Theorem~\ref{theo:vol_orthocentric_simplex_acute_even_d} is absolutely
integrable. Indeed, if $\kappa\geq s$, then all arguments of $\Phi$ are real, so
$0\leq\Phi(t)\leq1$ shows that each summand is bounded in absolute
value by the integrable function $\eee^{-x^2/2}$. On the other hand, for $\kappa \in (0, s)$ the asymptotic formula
$$
\Phi(\ii t) \sim \frac{\ii \eee^{t^2/2}}{\sqrt{2\pi} \, t}, \qquad t\to \pm \infty,
$$
gives
\begin{equation}\label{eq:asympt_e_x^2_times_product_Phis}
\eee^{-x^2/2}
\prod_{j=0}^{d}
\Phi\left(
\eta \frac{\tau_j\sqrt{\kappa-s}}{s}\,x
\right)
\sim
\frac{
(\ii s \eta / x)^{d+1}
\,
\eee^{
-\frac{\kappa}{2s}x^2}
}{
(2\pi)^{(d+1)/2}
(s-\kappa)^{(d+1)/2}
\prod_{j=0}^{d}\tau_j
}
,
\qquad x\to\pm\infty,
\end{equation}
for $\eta \in \{\pm 1\}$, which implies integrability for $\kappa \in (0,s)$.
The substitution $x\mapsto-x$ shows that the integrals of the individual summands are equal, and hence the volume formula in
Theorem~\ref{theo:vol_orthocentric_simplex_acute_even_d} simplifies to
\[
\Vol_{d,\kappa}(P)
=
\frac{\omega_{d+1}}{\kappa^{d/2}\sqrt{2\pi}}
\int_{-\infty}^{\infty}
\prod_{j=0}^d
\Phi\left(
\frac{\tau_j\sqrt{\kappa-s}}{s}x
\right)
\eee^{-x^2/2}\dd x,
\qquad \kappa>0.
\]
For $\kappa<0$, however, the integrals corresponding to the two
products generally do not converge separately---this again follows from~\eqref{eq:asympt_e_x^2_times_product_Phis}; only their sum exhibits
the cancellation needed for convergence.
\end{remark}

A different formula for the volume of $P$, valid in every dimension,
was obtained in~\cite[Theorem~3.6]{KabluchkoSchange2025Hyperbolic}.
That formula involves a conditionally convergent, highly oscillatory
integral.  By contrast, the symmetrized integral in
Theorem~\ref{theo:vol_orthocentric_simplex_acute_even_d} is absolutely
convergent; if $P\subseteq\mathbb B^d(\kappa)$, its integrand decays
exponentially fast.

\subsection{Example: Angles and volumes of regular simplices}\label{subsec:angles_volumes_regular_simplices}
In this subsection, we specialize the preceding results to regular simplices and express them in terms of the geodesic
edge length of the simplex.
\begin{corollary}[Angles and volumes of regular simplices]
\label{cor:angles_regular_simplices_constant_curvature}
Let $d\geq2$ and $\kappa\in\R$. Let
$\Delta_\ell^d(\kappa)\subseteq\mathbb B^d(\kappa)$ be a $d$-dimensional regular
simplex of geodesic edge length $\ell$. Such a simplex exists for every
$\ell\in(0,\infty)$ if $\kappa\leq0$, whereas for $\kappa>0$ it exists
precisely when $0<\ell<\frac1{\sqrt{\kappa}}\arccos(-\frac1d)$.
Put
\[
C:= C_\kappa(\ell) :=
\begin{cases}
\cosh(\ell\sqrt{-\kappa}),&\kappa<0,\\
1,&\kappa=0,\\
\cos(\ell\sqrt{\kappa}),&\kappa>0.
\end{cases}
\]
Then, for every $k\in\{0,\ldots,d-1\}$, every $k$-dimensional face
$F$ of $\Delta_\ell^d(\kappa)$, and every $x\in\relint(F)$, the
external and internal angles at $F$ are given by
\begin{align}
\alpha_x\bigl(N_x(F,\Delta_\ell^d(\kappa))\bigr)
&=
\frac1{\sqrt{2\pi}}
\int_{-\infty}^{\infty}
\Phi\left(
\sqrt{\frac{C}{1+kC}}\,y
\right)^{d-k}
\eee^{-y^2/2}\dd y,
\label{eq:regular_simplex_external_angle}\\
\alpha_x\bigl(T_x(F,\Delta_\ell^d(\kappa))\bigr)
&=
\frac1{\sqrt{2\pi}}
\int_{-\infty}^{\infty}
\Phi\left(
\sqrt{-\frac{C}{1+dC}}\,y
\right)^{d-k}
\eee^{-y^2/2}\dd y.
\label{eq:regular_simplex_internal_angle}
\end{align}
If $d$ is even and $\kappa \neq 0$, then
\begin{equation}\label{eq:volume_regular_simplex_curvature_kappa_even_d}
\Vol_{d,\kappa}\bigl(\Delta_\ell^d(\kappa)\bigr)
=
\frac{\omega_{d+1}}
{2\kappa^{d/2}\sqrt{2\pi}}
\int_{-\infty}^{\infty}
\left(
\Phi\left(
\sqrt{-\frac{C}{1+dC}}\,y
\right)^{d+1}
+
\Phi\left(
-\sqrt{-\frac{C}{1+dC}}\,y
\right)^{d+1}
\right)
\eee^{-y^2/2}\dd y.
\end{equation}

For $\kappa<0$, let $\Delta_\infty^d(\kappa)\subseteq \bar{\mathbb B}^d(\kappa)$ denote the $d$-dimensional regular ideal
simplex whose vertices lie on
$\partial\mathbb B^d(\kappa)$. Its angle and volume formulas correspond to the limiting values
$\ell\to\infty$ and $C\to\infty$.  More
precisely, for every $k\in\{1,\ldots,d-1\}$, every $k$-dimensional
face $F$ of $\Delta_\infty^d(\kappa)$, and every
$x\in\relint(F)\cap\mathbb B^d(\kappa)$, one has
\begin{align}
\alpha_x\bigl(N_x(F,\Delta_\infty^d(\kappa))\bigr)
&=
\frac1{\sqrt{2\pi}}
\int_{-\infty}^{\infty}
\Phi\left(\frac{y}{\sqrt{k}}\right)^{d-k}
\eee^{-y^2/2}\dd y, \label{eq:angle_ideal_reg_simplex_external}
\\
\alpha_x\bigl(T_x(F,\Delta_\infty^d(\kappa))\bigr)
&=
\frac1{\sqrt{2\pi}}
\int_{-\infty}^{\infty}
\Phi\left(\frac{\ii y}{\sqrt d}\right)^{d-k}
\eee^{-y^2/2}\dd y. \label{eq:angle_ideal_reg_simplex_internal}
\end{align}
If, in addition, $d$ is even, then
\begin{equation}\label{eq:volume_regular_simplex_ideal_curvature_kappa_even_d}
\Vol_{d,\kappa}\bigl(\Delta_\infty^d(\kappa)\bigr)
=
\frac{\omega_{d+1}}
{2\kappa^{d/2}\sqrt{2\pi}}
\int_{-\infty}^{\infty}
\left(
\Phi\left(\frac{\ii y}{\sqrt d}\right)^{d+1}
+
\Phi\left(-\frac{\ii y}{\sqrt d}\right)^{d+1}
\right)
\eee^{-y^2/2}\dd y.
\end{equation}
\end{corollary}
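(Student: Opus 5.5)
We specialise Theorem~\ref{theo:hyperbolic_angles_acute_orthocentric_simplices} and Theorem~\ref{theo:vol_orthocentric_simplex_acute_even_d} to equal parameters $\tau_0=\cdots=\tau_d=\tau$, so that $s=(d+1)\tau^2$. By \eqref{eq:def_orthocentric_position}, the points $v_j$ satisfy $\|v_j\|^2=\frac{d}{(d+1)\tau^2}=:R^2$ and $\langle v_j,v_i\rangle=-\frac{1}{(d+1)\tau^2}=-R^2/d$ for $i\neq j$. Hence $P$ is a Euclidean regular simplex centred at the origin. Coordinate permutations of the vertices are realised by Euclidean isometries fixing $0$, and these are isometries of the Klein model. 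Therefore $P$ is a regular simplex in curvature $\kappa$, and every regular simplex with centre at the origin arises in this way.

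The first step is to compute the geodesic edge length from \eqref{eq:geodesic_distance_in_B_d_kappa}:
\[
C=\frac{1+\kappa\langle v_0,v_1\rangle}{1+\kappa R^2}=\frac{1-\kappa/s}{1+\kappa d/s}=\frac{s-\kappa}{s+d\kappa}.
\]
Solving for $\kappa/s$ gives
\[
\frac{\kappa}{s}=\frac{1-C}{1+dC}, \qquad \frac{\kappa-s}{s}=-\frac{(d+1)C}{1+dC}.
\]

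For existence and the admissible range of $\ell$, use \eqref{eq:containment_condition_simplex}, which gives $\kappa\geq -s/d$, with equality exactly in the ideal case. As $\kappa/s$ ranges over $(-1/d,\infty)$, the quantity $C$ decreases monotonically from $+\infty$ to $-1/d$. For $\kappa<0$ this yields $C\in(1,\infty)$, so every $\ell\in(0,\infty)$ occurs. For $\kappa>0$ it yields $C\in(-1/d,1)$, which is exactly the stated range $0<\ell<\frac{1}{\sqrt\kappa}\arccos(-1/d)$.

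Next, substitute into the angle formulas. For part~(d), the argument of $\Phi$ is
\[
\frac{\tau\sqrt{\kappa-s}}{s}\,y=\sqrt{\frac{\kappa-s}{(d+1)s}}\,y=\sqrt{-\frac{C}{1+dC}}\,y,
\]
which gives \eqref{eq:regular_simplex_internal_angle}.

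For part~(b), note that
\[
-\frac{s^2}{\kappa-s}-(d-k)\tau^2=s\Bigl(\frac{1+dC}{(d+1)C}-\frac{d-k}{d+1}\Bigr)=\frac{s(1+kC)}{(d+1)C}.
\]
Dividing $\tau^2=s/(d+1)$ by this gives $C/(1+kC)$, and hence \eqref{eq:regular_simplex_external_angle}.

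Two points need care here. The square-root convention may produce a common sign change in the arguments of $\Phi$; as in the earlier proofs, this is harmless by the substitution $y\mapsto-y$. The case $\kappa=s$ corresponds to $C=0$, and there the formulas reduce to $2^{-(d-k)}$, consistent with the theorem. The Euclidean case $\kappa=0$, $C=1$, is immediate. The volume formula \eqref{eq:volume_regular_simplex_curvature_kappa_even_d} follows in the same way from \eqref{eq:volume_acute_orthocentric_simplex_even}, since all $d+1$ factors coincide.

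In the ideal case $\kappa=-s/d$ we have $(\kappa-s)/s=-(d+1)/d$. The internal argument becomes $\ii y/\sqrt d$. For the external formula, $-s^2/(\kappa-s)-(d-k)\tau^2=\frac{sd}{d+1}-\frac{(d-k)s}{d+1}=\frac{ks}{d+1}$, so the argument is $y/\sqrt k$. This requires $k\geq1$, and in that case $F$ is not an ideal vertex, so Theorem~\ref{theo:hyperbolic_angles_acute_orthocentric_simplices} applies. The ideal volume formula is covered directly by Theorem~\ref{theo:vol_orthocentric_simplex_acute_even_d}, which already allows ideal vertices.

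The main obstacle is only bookkeeping. One must get the sign and branch conventions for the square roots right across $\kappa<0$, $0<\kappa<s$ and $\kappa>s$, in particular for $C<0$. One must also verify the monotone correspondence between $\kappa/s$ and $C$ that justifies the existence ranges.
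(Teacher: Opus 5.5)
Your proposal is correct and follows essentially the same route as the paper. Like the paper, you specialize the acute orthocentric simplex theorems to $\tau_0=\cdots=\tau_d=\tau$, compute $C=(s-\kappa)/(s+d\kappa)$ from the Klein distance formula, substitute $\tau^2(\kappa-s)/s^2=-C/(1+dC)$ and $\tau^2/\bigl(-s^2/(\kappa-s)-(d-k)\tau^2\bigr)=C/(1+kC)$ into parts (b), (d) and the even-dimensional volume theorem, and treat the ideal case $\kappa=-s/d$ by direct substitution for $k\geq1$, with the same handling of the sign ambiguity ($y\mapsto -y$), the orthant case $C=0$, and the admissible ranges of $\ell$.
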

\begin{proof}
Take $\tau_0=\cdots=\tau_d=\tau>0$ in the setting of
Section~\ref{subsec:acute_orthocentric_simplices_setting}. Then
$s=(d+1)\tau^2$, and the resulting orthocentric simplex $P = \conv(v_0, \ldots, v_d)$ is a regular simplex centered at $0$. Since any two regular simplices of the same geodesic edge length in the
space of curvature $\kappa$ are isometric, it suffices to prove the
formulas for this centered realization.  By~\eqref{eq:def_orthocentric_position},  $\|v_j\|^2 = d/((d+1)\tau^2)$ and thus the condition that the simplex $P$ be contained in
$\mathbb B^d(\kappa)$ is equivalent to $\tau\in (0,\infty)$ for $\kappa\geq0$ and $\tau > \sqrt{-\kappa d/(d+1)}$ for $\kappa<0$. Also, for $\kappa <0$, the ideal regular simplex corresponds to $\tau = \sqrt{-\kappa d/(d+1)}$.

Let $\ell = \rho_\kappa(v_i,v_j)$, $i\neq j$,  be the geodesic edge length of $P$ and define $C$ as above.
The distance formula~\eqref{eq:geodesic_distance_in_B_d_kappa} in the Klein model
gives
\begin{equation}\label{eq:relation_C_tau_for_reg_simpl}
C
=
\frac{(d+1)\tau^2-\kappa}
{(d+1)\tau^2+d\kappa}.
\end{equation}
Together with the admissible range of $\tau$, \eqref{eq:relation_C_tau_for_reg_simpl} implies that $C$ ranges over
$(1,\infty)$ if $\kappa<0$ and over $(-1/d,1)$ if $\kappa>0$. The  relation between $C$ and $\ell$
then gives  the stated range of possible edge lengths $\ell$ for $\kappa \neq 0$. For $\kappa = 0$,  $\ell=\sqrt2 /\tau$ ranges in $(0,\infty)$.

Let $F$ be a $k$-dimensional face. Since all parameters $\tau_j$ are equal, the
angle formulas in Theorem~\ref{theo:hyperbolic_angles_acute_orthocentric_simplices}
contain $d-k$ identical factors. Moreover, for $\kappa \neq s$,  \eqref{eq:relation_C_tau_for_reg_simpl} implies that
\begin{equation}\label{eq:relation_C_tau_for_reg_simpl_for_parameters_of_Phi}
\frac{\tau^2}
{-\dfrac{s^2}{\kappa-s}-(d-k)\tau^2}
=
\frac{C}{1+kC},
\qquad
\frac{\tau^2(\kappa-s)}{s^2}
=
-\frac{C}{1+dC}.
\end{equation}
The identities in~\eqref{eq:relation_C_tau_for_reg_simpl_for_parameters_of_Phi} extend
continuously to $\kappa=s$, with each side equal to $0$.
Substitution into parts~(b) and~(d) of Theorem~\ref{theo:hyperbolic_angles_acute_orthocentric_simplices}  gives the stated angle formulas~\eqref{eq:regular_simplex_external_angle} and~\eqref{eq:regular_simplex_internal_angle}.  If $d$ is even and $\kappa\neq0$,  Theorem~\ref{theo:vol_orthocentric_simplex_acute_even_d} yields~\eqref{eq:volume_regular_simplex_curvature_kappa_even_d}.

Finally, suppose that $\kappa<0$. The regular ideal simplex corresponds
to $\tau=\sqrt{-\kappa d/(d+1)}$.
For $k\geq1$, the right-hand sides of
\eqref{eq:relation_C_tau_for_reg_simpl_for_parameters_of_Phi} then take their limiting values as $C\to\infty$, namely $1/k$ and
$-1/d$. Substitution into parts~(b) and~(d) of Theorem~\ref{theo:hyperbolic_angles_acute_orthocentric_simplices}  gives the angle formulas~\eqref{eq:angle_ideal_reg_simplex_external} and~\eqref{eq:angle_ideal_reg_simplex_internal}.  If $d$ is even, Theorem~\ref{theo:vol_orthocentric_simplex_acute_even_d} yields the volume formula~\eqref{eq:volume_regular_simplex_ideal_curvature_kappa_even_d}.
\end{proof}

\begin{remark}[Simplification for $\kappa > 0$]
Suppose that $d$ is even. If $\kappa>0$, the two summands in the
integrand in~\eqref{eq:volume_regular_simplex_curvature_kappa_even_d}
are individually integrable. Their integrals are equal by the
substitution $y\mapsto-y$, and hence
\[
\Vol_{d,\kappa}\bigl(\Delta_\ell^d(\kappa)\bigr)
=
\frac{\omega_{d+1}}
{\kappa^{d/2}\sqrt{2\pi}}
\int_{-\infty}^{\infty}
\Phi\left(
\sqrt{-\frac{C}{1+dC}}\,y
\right)^{d+1}
\eee^{-y^2/2}\dd y,
\qquad \kappa>0.
\]
If $\kappa<0$, however, the integrals of the two summands
diverge separately; only their sum exhibits the cancellation required
for convergence. Thus, in negative curvature the symmetrized form
in~\eqref{eq:volume_regular_simplex_curvature_kappa_even_d} and~\eqref{eq:volume_regular_simplex_ideal_curvature_kappa_even_d} must be
retained.
\end{remark}

\begin{example}[The Euclidean case $\kappa=0$]
Let $\Delta_\ell^d(0)$ be a $d$-dimensional Euclidean regular simplex
of edge length $\ell>0$. Then, for every
$k\in\{0,\ldots,d-1\}$, every $k$-dimensional face $F$ of
$\Delta_\ell^d(0)$, and every $x\in\relint(F)$, the external and
internal angles at $F$ are given by
\begin{align*}
\alpha_x\bigl(N_x(F,\Delta_\ell^d(0))\bigr)
&=
\frac1{\sqrt{2\pi}}
\int_{-\infty}^{\infty}
\Phi\left(\frac{y}{\sqrt{k+1}}\right)^{d-k}
\eee^{-y^2/2}\dd y,
\\
\alpha_x\bigl(T_x(F,\Delta_\ell^d(0))\bigr)
&=
\frac1{\sqrt{2\pi}}
\int_{-\infty}^{\infty}
\Phi\left(\frac{\ii y}{\sqrt{d+1}}\right)^{d-k}
\eee^{-y^2/2}\dd y.
\end{align*}
The Euclidean volume of $\Delta_\ell^d(0)$ is well known to be
\[
\Vol_{d,0}\bigl(\Delta_\ell^d(0)\bigr)
=
\frac{\sqrt{d+1}}{d!\,2^{d/2}}\,\ell^d.
\]
If $d$ is even, this value can also be obtained by letting
$\kappa\to0$ in
\eqref{eq:volume_regular_simplex_curvature_kappa_even_d}.
\end{example}

\section{Simplices and cubes with isometric local cones}
\label{sec:cubes_simplices_isometric_local_cones}

\subsection{The simplex--cube correspondence}
\label{subsec:cube_simplex_correspondence}

The preceding results reveal a correspondence between regular
cubes and regular simplices:  after a simple matching of their edge-length parameters, their
normal and tangent cones at faces of the same dimension $k$ are isometric for all $k$ simultaneously.
Let $d\geq 2$ and consider
non-ideal regular polytopes
\[
\Delta:=\Delta_{\ell_\Delta}^d(\kappa_\Delta) \subseteq \mathbb B^d (\kappa_\Delta),
\qquad
\Box:=\Box_{\ell_\Box}^d(\kappa_\Box)\subseteq \mathbb B^d (\kappa_\Box)
\]
in possibly different curvatures  $\kappa_\Delta,\kappa_\Box\in\R$ and with geodesic edge lengths $\ell_\Delta$ and $\ell_\Box$. Put
\[
C_\Delta:=C_{\kappa_\Delta}(\ell_\Delta),
\qquad
C_\Box:=C_{\kappa_\Box}(\ell_\Box),
\quad
\text{ where }
\quad
C_\kappa(\ell):=
\begin{cases}
\cosh(\sqrt{-\kappa}\,\ell),&\kappa<0,\\
1,&\kappa=0,\\
\cos(\sqrt{\kappa}\,\ell),&\kappa>0.
\end{cases}
\]
Recall from Corollaries~\ref{cor:angles_volumes_cubes_in_const_curvature}
and~\ref{cor:angles_regular_simplices_constant_curvature} and their proofs that the
admissible ranges of these parameters are $-\frac1d<C_\Delta<\infty$ and  $\frac{d-2}{d}<C_\Box<\infty$.
The transformation
\begin{equation}
C_\Box=1+2C_\Delta
\label{eq:cube_simplex_matching_parameters}
\end{equation}
maps the first of these intervals bijectively onto the second one. We call a pair $(\Delta,\Box)$ satisfying~\eqref{eq:cube_simplex_matching_parameters} a \emph{matched simplex--cube
pair}.

\begin{proposition}[Local cones of matched simplices and cubes]
\label{prop:cubes_simplices_isometric_local_cones}
Suppose that \eqref{eq:cube_simplex_matching_parameters} holds. Then, for every $k\in\{0,\ldots,d-1\}$, the following
assertions hold.
\begin{itemize}
\item[(a)] If $C_\Delta\neq 0$, the Riemannian normal cones at any
$k$-dimensional face of $\Delta$ and at any $k$-dimensional face of
$\Box$ are isometric to
\[
C_{d-k}\left(
-d-\frac1{C_\Delta};
1,\ldots,1;
1,\ldots,1
\right).
\]
If $C_\Delta=0$, all these normal cones are isometric to the
$(d-k)$-dimensional orthant $[0,\infty)^{d-k}$.

\item[(b)] If $C_\Delta\neq 0$, the Riemannian tangent cones at any
$k$-dimensional face of $\Delta$ and at any $k$-dimensional face of
$\Box$ are isometric to
\[
\R^k\oplus
C_{d-k}\left(
k+\frac1{C_\Delta};
1,\ldots,1;
1,\ldots,1
\right).
\]
If $C_\Delta=0$, all these tangent cones are isometric to $\R^k\oplus[0,\infty)^{d-k}$.

\item[(c)] Consequently, the external and internal angles at
$k$-dimensional faces of $\Delta$ and $\Box$ coincide. Their common
values are, respectively,
\begin{equation*}
\frac{1}{\sqrt{2\pi}}
\int_{-\infty}^{\infty}
\Phi\left(
\sqrt{\frac{C_\Delta}{1+kC_\Delta}}\,y
\right)^{d-k}
\eee^{-y^2/2}\,\dint y
\;\;\text{and}\;\;
\frac{1}{\sqrt{2\pi}}
\int_{-\infty}^{\infty}
\Phi\left(
\sqrt{-\frac{C_\Delta}{1+dC_\Delta}}\,y
\right)^{d-k}
\eee^{-y^2/2}\,\dint y.
\end{equation*}
\end{itemize}
\end{proposition}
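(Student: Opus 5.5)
The plan is to reduce both polytopes to the orthocentric-cone descriptions already proved, and to rescale parameters so that they match. An orthocentric cone $C_m(\lambda_0;\lambda_1,\dots,\lambda_m;\eps)$ is unchanged, up to isometry, when all $\lambda_j$ are multiplied by a common factor $c>0$. Indeed, the generators $v_i$ are then replaced by $v_i/\sqrt c$, and the positive hull is unchanged after rescaling. So it suffices to compute the ratios $\lambda_0/\lambda_1$ for the normal cones. In both cases $\lambda_1=\dots=\lambda_{d-k}$.

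\emph{Simplex.} Take the centered realization from the proof of Corollary~\ref{cor:angles_regular_simplices_constant_curvature}: $\tau_j=\tau$ and $s=(d+1)\tau^2$. By Theorem~\ref{theo:hyperbolic_angles_acute_orthocentric_simplices}(a), for $\kappa\neq s$ the normal cone at a $k$-face is $C_{d-k}(s^2/(\kappa-s);\tau^2,\dots,\tau^2;1,\dots,1)$. Dividing all parameters by $\tau^2$ gives $\lambda_0=s^2/(\tau^2(\kappa-s))$. The second identity in \eqref{eq:relation_C_tau_for_reg_simpl_for_parameters_of_Phi} gives $\tau^2(\kappa-s)/s^2=-C_\Delta/(1+dC_\Delta)$, so $\lambda_0=-(1+dC_\Delta)/C_\Delta=-d-1/C_\Delta$. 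The case $\kappa=s$ corresponds exactly to $C_\Delta=0$ by \eqref{eq:relation_C_tau_for_reg_simpl}, and there the theorem already gives the orthant.

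\emph{Cube.} Take $[-\tau,\tau]^d$ with $p=d-k$. Theorem~\ref{theo:hyperbolic_angles_boxes}(a) gives the normal cone $C_{p}(1/\kappa;\tau^2,\dots,\tau^2;1,\dots,1)$, so after rescaling $\lambda_0=1/(\kappa\tau^2)$. From the proof of Corollary~\ref{cor:angles_volumes_cubes_in_const_curvature}, $\kappa\tau^2=-(C_\Box-1)/(2+d(C_\Box-1))$. Substituting $C_\Box-1=2C_\Delta$ gives $\kappa\tau^2=-C_\Delta/(1+dC_\Delta)$, hence again $\lambda_0=-d-1/C_\Delta$.

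The degenerate case needs more care. $C_\Delta=0$ means $C_\Box=1$, and for a non-ideal cube with $\tau>0$ this forces $\kappa=0$. That situation is the Euclidean cube, whose cones are orthants, as remarked after Theorem~\ref{theo:hyperbolic_angles_boxes}. This handles the caveat that Theorem~\ref{theo:hyperbolic_angles_boxes} assumed $\kappa\neq0$. Conversely, when $\kappa_\Box=0$ we have $C_\Box=1$, so $C_\Delta=0$; thus the nondegenerate case always has $\kappa_\Box\neq0$. Similarly, for the simplex with $\kappa_\Delta=0$ we get $C_\Delta=1\neq0$, and Theorem~\ref{theo:hyperbolic_angles_acute_orthocentric_simplices} covers $\kappa=0$. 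Every $k$-face is equivalent to the representative one by symmetry, as in the corollaries. This proves (a).

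For (b), I would apply Theorem~\ref{theo:orthocentric_cone_dual} to the common normal cone from (a). Its linear hull has dimension $d-k$ by Proposition~\ref{prop:normal_cone_in_constant_curvature}(a), so the tangent cone is $\R^k\oplus C_{d-k}(-\lambda_0-(d-k);1,\dots,1;1,\dots,1)$. Here $-\lambda_0-(d-k)=d+1/C_\Delta-d+k=k+1/C_\Delta$. In the case $C_\Delta=0$, the polar of an orthant is again an orthant, giving $\R^k\oplus[0,\infty)^{d-k}$.

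For (c), isometric cones have equal solid angles. The common values are then read off from Corollary~\ref{cor:angles_regular_simplices_constant_curvature}, formulas \eqref{eq:regular_simplex_external_angle}--\eqref{eq:regular_simplex_internal_angle}. As a consistency check, I would also substitute $C_\Box=1+2C_\Delta$ into \eqref{eq:external_angles_cube}--\eqref{eq:internal_angles_cube}, which gives $(C_\Box-1)/(2+k(C_\Box-1))=C_\Delta/(1+kC_\Delta)$, and similarly with $d$ in place of $k$.

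The main obstacle is bookkeeping rather than ideas. I need to track which of the existing theorems covers which curvature, in particular $\kappa=0$ for the cube and $\kappa=s$ for the simplex, and to justify the rescaling invariance of orthocentric cones.
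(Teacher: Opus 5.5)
Your proposal is correct and follows essentially the same route as the paper. Both arguments rescale the orthocentric parameters, identify $\lambda_0=-d-1/C_\Delta$ for both polytopes via the identities from the two corollaries, and treat $C_\Delta=0$ through the Euclidean cube and the orthant case $\kappa=s$. The only cosmetic difference is that you get the tangent cones by applying the polarity theorem to the common normal cone, instead of reading them off from part (c) of each theorem, which amounts to the same computation.
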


\begin{proof}
Suppose first that $C_\Delta\neq 0$. By Theorem~\ref{theo:hyperbolic_angles_boxes} and the identities derived
in the proof of
Corollary~\ref{cor:angles_volumes_cubes_in_const_curvature}, the normal
and tangent cones at a $k$-dimensional face of $\Box$ are isometric,
respectively, to
\[
C_{d-k}\left(
-d-\frac{2}{C_\Box-1};
1,\ldots,1;
1,\ldots,1
\right)
\quad\text{and}\quad
\R^k\oplus
C_{d-k}\left(
k+\frac{2}{C_\Box-1};
1,\ldots,1;
1,\ldots,1
\right).
\]
Here we used the invariance of an orthocentric cone under a common
positive rescaling of the parameters $\lambda_0,\lambda_1,\ldots,\lambda_m$:
for every $c>0$,
\begin{equation}\label{eq:orthocentric_cone_invariance_under_multiplication_of_param}
C_m(c\lambda_0;c\lambda_1,\ldots,c\lambda_m;
\varepsilon_1,\ldots,\varepsilon_m)
\text{ is isometric to }
C_m(\lambda_0;\lambda_1,\ldots,\lambda_m;
\varepsilon_1,\ldots,\varepsilon_m).
\end{equation}

Similarly, Theorem~\ref{theo:hyperbolic_angles_acute_orthocentric_simplices}
and the identities derived in the proof of
Corollary~\ref{cor:angles_regular_simplices_constant_curvature} show
that the normal and tangent cones at a $k$-dimensional face of $\Delta$
are isometric, respectively, to
\[
C_{d-k}\left(
-d-\frac{1}{C_\Delta};
1,\ldots,1;
1,\ldots,1
\right)
\quad\text{and}\quad
\R^k\oplus
C_{d-k}\left(
k+\frac{1}{C_\Delta};
1,\ldots,1;
1,\ldots,1
\right).
\]
By~\eqref{eq:cube_simplex_matching_parameters}, we have
$
\frac{2}{C_\Box-1}
=
\frac{1}{C_\Delta}$,
and the asserted cone isometries follow.

If $C_\Delta= 0$, then $C_\Box=1$. Thus, $\Box$ is Euclidean, and its normal
and tangent cones have the isometry types stated in parts~(a) and~(b).
For the simplex, the case $C_\Delta=0$ corresponds to the orthant case
in Theorem~\ref{theo:hyperbolic_angles_acute_orthocentric_simplices},
and hence gives the same isometry types.

The equality of the external and internal angles follows from the
isometries of the corresponding cones. The integral representations
follow from either
Corollary~\ref{cor:angles_volumes_cubes_in_const_curvature} or
Corollary~\ref{cor:angles_regular_simplices_constant_curvature}, after
using~\eqref{eq:cube_simplex_matching_parameters}.
\end{proof}

\begin{remark}[Geometric meaning of the matching relation]
Let $\alpha_\Delta$ denote the vertex angle of a triangular
two-dimensional face of $\Delta$, and let $\alpha_\Box$ denote the
vertex angle of a square two-dimensional face of $\Box$. Applying the spherical and hyperbolic laws of cosines and sines to the
two-dimensional faces gives
\[
\cos\alpha_\Delta
=
\frac{C_\Delta}{1+C_\Delta},
\qquad
\cos\alpha_\Box
=
\frac{C_\Box-1}{C_\Box+1}.
\]
Consequently, $C_\Box=1+2C_\Delta$ if and only if  $\alpha_\Box=\alpha_\Delta$.
Thus, the matching relation means precisely that the triangular
two-dimensional faces of the simplex and the square two-dimensional
faces of the cube have the same vertex angles.
This also gives a geometric interpretation of the isometry between
the tangent cones at vertices. Indeed, the tangent cone at a vertex
of either polytope is generated by the directions of the $d$ incident
edges. Any two such edges are contained in a triangular face in the
simplex and in a square face in the cube. Hence, for a matched pair, the corresponding systems of unit tangent
vectors along the incident edges have the same Gram matrix.
\end{remark}
\begin{remark}[Curvature types of matched pairs]
The matching relation may connect regular polytopes belonging to
different geometries. The possibilities can be
summarized as follows:
\[
\begin{array}{c|c|c|c}
\text{range of }C_\Delta
&
\Delta
&
\Box
&
\text{range of }C_\Box
\\
\hline
-\dfrac1d<C_\Delta<0
&
\text{spherical simplex, }\alpha_\Delta>\dfrac{\pi}{2}
&
\text{spherical cube}
&
\dfrac{d-2}{d}<C_\Box<1
\\[2mm]
C_\Delta=0
&
\text{spherical orthant, }\alpha_\Delta=\dfrac{\pi}{2}
&
\text{Euclidean cube}
&
C_\Box=1
\\[2mm]
0<C_\Delta<1
&
\text{spherical simplex, }\alpha_\Delta<\dfrac{\pi}{2}
&
\text{hyperbolic cube}
&
1<C_\Box<3
\\[2mm]
C_\Delta=1
&
\text{Euclidean simplex}
&
\text{hyperbolic cube}
&
C_\Box=3
\\[2mm]
1<C_\Delta<\infty
&
\text{hyperbolic simplex}
&
\text{hyperbolic cube}
&
3<C_\Box<\infty
\\[2mm]
C_\Delta\to+\infty
&
\text{ideal hyperbolic simplex}
&
\text{ideal hyperbolic cube}
&
C_\Box\to+\infty.
\end{array}
\]
For example, in the case $C_\Delta=0$, the simplex has edge length $\ell_\Delta=\frac{\pi}{2\sqrt{\kappa_\Delta}}$
and is isometric to the intersection of the sphere with a Euclidean
orthant.
\end{remark}

\subsection{The ideal case and horospherical vertex figures}
\label{subsec:ideal_simplices_cubes_isometric_local_cones}
The ideal limit stated in the last row of the preceding correspondence has an additional property: the local cones of an ideal regular cube
and an ideal regular simplex can be identified with those of a
Euclidean regular simplex one dimension lower.

\begin{proposition}[Local cones of ideal simplices and cubes] \label{prop:matching_ideal_simplices_cubes}
Let $d\geq 2$ and $\kappa<0$. Then, for every $k\in\{1,\ldots,d-1\}$, the following
assertions hold.
\begin{itemize}
\item[(a)] The Riemannian normal cone at any $k$-dimensional face of the ideal
regular cube $\Box_\infty^d(\kappa)$ is isometric to the Riemannian
normal cone at any $k$-dimensional face of the ideal regular simplex
$\Delta_\infty^d(\kappa)$. More precisely, all these normal cones are
isometric to
$
C_{d-k}(-d;1,\ldots,1;1,\ldots,1).
$
\item[(b)] Likewise, the Riemannian tangent cones at such faces of the ideal
regular cube and the ideal regular simplex are all isometric to
$
\mathbb R^k\oplus
C_{d-k}(k;1,\ldots,1;1,\ldots,1).
$
\item[(c)] Moreover, the normal cone at any $(k-1)$-dimensional face of a
Euclidean regular $(d-1)$-simplex is isometric to $C_{d-k}(-d;1,\ldots,1;1,\ldots,1)$,
whereas its tangent cone is isometric to
$\mathbb R^{k-1}\oplus C_{d-k}(k;1,\ldots,1;1,\ldots,1)$.
\item[(d)] Consequently, the external and internal angles at
$k$-dimensional faces of $\Box_\infty^d(\kappa)$ and
$\Delta_\infty^d(\kappa)$ coincide with the corresponding angles at
$(k-1)$-dimensional faces of a Euclidean regular $(d-1)$-simplex.
Their common external and internal angles are, respectively,
\[
\frac{1}{\sqrt{2\pi}}
\int_{-\infty}^{\infty}
\Phi\left(\frac{y}{\sqrt{k}}\right)^{d-k}
\eee^{-y^2/2}\,\dint y
\quad\text{and}\quad
\frac{1}{\sqrt{2\pi}}
\int_{-\infty}^{\infty}
\Phi\left(\frac{\ii y}{\sqrt d}\right)^{d-k}
\eee^{-y^2/2}\,\dint y.
\]
\end{itemize}
\end{proposition}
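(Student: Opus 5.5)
We derive parts (a) and (b) for the ideal cube and simplex from Theorems~\ref{theo:hyperbolic_angles_boxes} and~\ref{theo:hyperbolic_angles_acute_orthocentric_simplices}. Both theorems stay valid at the ideal parameter values, because for $k\geq1$ the face $F$ is not an ideal vertex and $\relint(F)\cap\mathbb B^d(\kappa)\neq\varnothing$. We then use the rescaling invariance~\eqref{eq:orthocentric_cone_invariance_under_multiplication_of_param}.

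For the cube, take $\tau=1/\sqrt{-\kappa d}$ and $p=d-k$. Theorem~\ref{theo:hyperbolic_angles_boxes}(a) gives the normal cone as $C_{d-k}(1/\kappa;\tau^2,\ldots,\tau^2;1,\ldots,1)$. Dividing all parameters by $\tau^2>0$ and using $1/(\kappa\tau^2)=-d$ yields $C_{d-k}(-d;1,\ldots,1;1,\ldots,1)$. For the tangent cone, part~(c) gives first parameter $-1/\kappa-p\tau^2=(d-p)\tau^2=k\tau^2$, which rescales to $\R^k\oplus C_{d-k}(k;1,\ldots,1;1,\ldots,1)$.

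For the simplex, take $\tau_j=\tau$ with $\tau^2=-\kappa d/(d+1)$, so $s=(d+1)\tau^2=-\kappa d$ and $\kappa-s=-s(d+1)/d$. Then $s^2/(\kappa-s)=-sd/(d+1)=-d\tau^2$, and rescaling gives $C_{d-k}(-d;1,\ldots,1;1,\ldots,1)$. The tangent parameter is $d\tau^2-(d-k)\tau^2=k\tau^2$, which rescales to $k$. Since $\kappa\neq s$ here ($\kappa<0<s$), the non-orthant case of the theorem applies. Alternatively, one can pass to the limit $C_\Delta\to\infty$ in Proposition~\ref{prop:cubes_simplices_isometric_local_cones}, but the direct substitution avoids any continuity argument for isometry types.

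For part (c), apply Theorem~\ref{theo:hyperbolic_angles_acute_orthocentric_simplices} with $\kappa=0$ and dimension $d-1$ to a Euclidean regular $(d-1)$-simplex with all $\tau_j=\tau$, so $s=d\tau^2$. At a $(k-1)$-dimensional face the number of active facets is $(d-1)-(k-1)=d-k$. The normal cone is $C_{d-k}(-s;\tau^2,\ldots;1,\ldots)=C_{d-k}(-d\tau^2;\ldots)$, which rescales to $C_{d-k}(-d;1,\ldots,1;1,\ldots,1)$. The tangent cone is $\R^{k-1}\oplus C_{d-k}(s-(d-k)\tau^2;\ldots)$ with $s-(d-k)\tau^2=k\tau^2$, which rescales to the claimed form. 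This requires $d-1\geq1$; for $d-1=1$ we have $k=1$, and a segment's endpoint cones (a ray, a half-line) are checked directly. Since Theorem~\ref{theo:hyperbolic_angles_acute_orthocentric_simplices} was stated only for $d\geq2$, one should note that the proof works verbatim in dimension $1$, or handle $d=2$ by hand.

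Part (d) then follows from the isometries together with Theorem~\ref{theo:ortho_cone_angle} and Definition~\ref{def:explicit_g_d_formula_intro}. For the normal cone, $\mathbf g_{d-k}(d-(d-k);1,\ldots)=\mathbf g_{d-k}(k;\ldots)$ gives arguments $y/\sqrt k$. For the tangent cone, $\mathbf g_{d-k}(-k-(d-k);\ldots)=\mathbf g_{d-k}(-d;\ldots)$ gives $\sqrt{-1/d}=\ii/\sqrt d$; the solid angle of $\R^k\oplus K$ equals that of $K$. These values also agree with Corollaries~\ref{cor:angles_volumes_cubes_in_const_curvature} and~\ref{cor:angles_regular_simplices_constant_curvature}.

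I expect the main obstacle to be bookkeeping rather than anything deep. Two things need care: checking that the parameter signs satisfy the existence condition ($\lambda_0=-d<-(d-k)$, so the cones exist), and the low-dimensional edge case in (c).
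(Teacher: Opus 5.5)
Your proposal is correct and follows the paper's proof essentially step by step: the same specializations of Theorems~\ref{theo:hyperbolic_angles_boxes} and~\ref{theo:hyperbolic_angles_acute_orthocentric_simplices} at the ideal parameters, the same rescaling by $\tau^2$, and the same treatment of the Euclidean $(d-1)$-simplex via the case $\kappa=0$, with $d=2$ handled separately (note that the theorem needs ambient dimension $d-1\geq 2$, not merely $d-1\geq 1$ as you first wrote). The only difference is cosmetic: you read off the angles in (d) directly from Theorem~\ref{theo:ortho_cone_angle}, whereas the paper quotes Corollary~\ref{cor:angles_regular_simplices_constant_curvature} in dimension $d-1$ with $C=1$.
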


\begin{proof}
Consider first the ideal regular cube. In its centered realization from
Section~\ref{subsec:example_angles_vols_cubes}, one has $\tau^2=-\frac{1}{\kappa d}$.
Applying Theorem~\ref{theo:hyperbolic_angles_boxes} with $p=d-k$
shows that the normal and tangent cones at any $k$-dimensional face
are isometric, respectively, to
\[
C_{d-k}\left(
\frac{1}{\kappa};
\tau^2,\ldots,\tau^2;
1,\ldots,1
\right)
\quad\text{and}\quad
\R^k\oplus
C_{d-k}\left(
-\frac{1}{\kappa}-(d-k)\tau^2;
\tau^2,\ldots,\tau^2;
1,\ldots,1
\right).
\]
Simultaneously dividing all $\lambda$-parameters of an orthocentric cone by the
positive number $\tau^2$ does not change its isometry type; see~\eqref{eq:orthocentric_cone_invariance_under_multiplication_of_param}. Thus,
the normal and tangent cones are isometric, respectively, to
\[
C_{d-k}(-d;1,\ldots,1;1,\ldots,1)
\quad\text{and}\quad
\R^k\oplus C_{d-k}(k;1,\ldots,1;1,\ldots,1).
\]

Consider next the ideal regular $d$-simplex. In the notation of Section~\ref{subsec:angles_volumes_regular_simplices}, all parameters are equal to some $\tau>0$ such that $\tau^2=-\frac{\kappa d}{d+1}$ and $s=(d+1)\tau^2=-\kappa d$.
By
Theorem~\ref{theo:hyperbolic_angles_acute_orthocentric_simplices},
the normal and tangent cones at any $k$-dimensional face are isometric,
respectively, to
\[
C_{d-k}\left(
\frac{s^2}{\kappa-s};
\tau^2,\ldots,\tau^2;
1,\ldots,1
\right)
\quad\text{and}\quad
\R^k\oplus
C_{d-k}\left(
-\frac{s^2}{\kappa-s}-(d-k)\tau^2;
\tau^2,\ldots,\tau^2;
1,\ldots,1
\right).
\]
Dividing all parameters by $\tau^2$, see~\eqref{eq:orthocentric_cone_invariance_under_multiplication_of_param}, and using the relation $\frac{s^2}{\kappa-s}=-d\tau^2$ therefore gives the same two
isometry types,
\[
C_{d-k}(-d;1,\ldots,1;1,\ldots,1)
\quad\text{and}\quad
\R^k\oplus C_{d-k}(k;1,\ldots,1;1,\ldots,1).
\]
This proves parts~(a) and~(b).

If $d=2$, then $k=1$, and parts~(c) and~(d) follow immediately.  We may therefore assume in the remainder of the proof that $d\geq 3$. Consider a Euclidean regular $(d-1)$-simplex.  In the setting
of Section~\ref{subsec:acute_orthocentric_simplices_setting}, take
all its parameters to be equal to some $\tau>0$. Since there are $d$
vertices, one has $s=d\tau^2$.
Applying
Theorem~\ref{theo:hyperbolic_angles_acute_orthocentric_simplices}
with curvature $0$, ambient dimension $d-1$, and face dimension
$k-1$, we see that the normal and tangent cones at any
$(k-1)$-dimensional face are isometric, respectively, to
\[
C_{d-k}\left(
-d\tau^2;
\tau^2,\ldots,\tau^2;
1,\ldots,1
\right)
\quad\text{and}\quad
\R^{k-1}\oplus
C_{d-k}\left(
k\tau^2;
\tau^2,\ldots,\tau^2;
1,\ldots,1
\right).
\]
Dividing all $\lambda$-parameters by $\tau^2$ gives the same isometry types as above,  which proves part~(c).

Taking solid angles proves the asserted equality of the corresponding
internal and external angles. The integral
representations in part~(d) follow from
Corollary~\ref{cor:angles_regular_simplices_constant_curvature},
applied in dimension $d-1$ to a face of dimension $k-1$, with
curvature $0$ and hence with $C=1$.
\end{proof}

\begin{remark}[Horospherical interpretation]
The preceding result also has a direct geometric explanation. Let $P$
be either an ideal regular $d$-cube or an ideal regular $d$-simplex,
and let $F$ be one of its $k$-dimensional faces. Choose an ideal vertex
$v$ of $F$ and realize the hyperbolic space in the upper half-space
model so that $v$ is the point at infinity. Choose a horosphere $H$ centred at $v$ and sufficiently close to $v$.
In the upper half-space model, $H$ is represented by a horizontal
Euclidean hyperplane. The $d$ edges of $P$ emanating from $v$ are
represented by vertical geodesic rays and therefore meet $H$
orthogonally. Their intersection points with $H$ are precisely the
vertices of the horospherical vertex figure $P\cap H$.

The horospherical vertex figure $P\cap H$, endowed with the intrinsic
Euclidean metric of the horosphere, is a Euclidean regular
$(d-1)$-simplex. Indeed, the $d$
vertices of $P\cap H$ are obtained by intersecting $H$ with the $d$
edges of $P$ emanating from $v$. For both the cube and the simplex, the stabilizer of $v$ in the symmetry group of $P$ is finite and
permutes the $d$ incident edges as the full symmetric group. It
therefore preserves every horosphere centred at $v$, and its
restriction to $H$ acts by Euclidean isometries. Hence, the vertices of $P\cap H$ form a Euclidean regular
simplex.

Note that $F\cap H$ is a $(k-1)$-dimensional face of $P\cap H$. Choose
$x\in\operatorname{relint}(F\cap H)$; then
$x\in\operatorname{relint}(F)$. Every facet of $P$ containing $F$ also
contains $v$. Its supporting hyperplane is therefore vertical in the
upper half-space model, and its intersection with $H$ is a supporting
hyperplane of $P\cap H$ containing $F\cap H$. Since the hyperbolic
metric is conformal to the Euclidean metric, the corresponding normal
directions at $x$ agree up to a common positive scale. Hence the
Riemannian normal cone $N_x(F,P)$ is contained in $T_xH$ and is
isometric to the Euclidean normal cone
$N_x(F\cap H,P\cap H)$ of $P\cap H$ at $F\cap H$.
Taking polar cones in the full hyperbolic tangent space on the one hand
and in $T_xH$ on the other, and using that the orthogonal complement of
$T_xH$ is one-dimensional, shows that
$T_x(F,P)$ is isometric to $\R\oplus T_x(F\cap H,P\cap H)$.
This explains geometrically both the dimension shift
$(d,k)\mapsto(d-1,k-1)$ and the equality of the corresponding internal
and external angles.
\end{remark}

\section{Angles and volumes of rectangular simplices}
Fix a dimension $d\geq 2$, parameters $\tau_1, \ldots, \tau_d > 0$ and consider the \emph{rectangular simplex}
$$
P
:=
\conv\left(0, \frac{e_1}{\tau_1}, \ldots, \frac{e_d}{\tau_d} \right).
$$
We assume that the curvature $\kappa\in \R$ is such that $P$ is contained in $\bar {\mathbb B}^d(\kappa)$, equivalently,
\begin{equation}\label{eq:rect_simplex_kappa_range}
\kappa \geq  - \min(\tau_1^2, \ldots, \tau_d^2).
\end{equation}
In this section, we determine the internal and external angles of $P$.   In even dimensions, we also obtain a formula for
the Riemannian volume of $P$.

\subsection{Internal and external angles of rectangular simplices}
We distinguish faces that contain the vertex $0$, and those which do not. Let $k\in \{0, \ldots, d-1\}$. After relabeling the coordinates and the corresponding parameters,
there is no loss of generality in considering faces of the following
two forms:
$$
F
=
\conv\left(0, \frac{e_1}{\tau_1}, \ldots, \frac{e_k}{\tau_k} \right)
\qquad \text{ and } \qquad
G
=
\conv\left(\frac{e_1}{\tau_1}, \ldots, \frac{e_{k+1}}{\tau_{k+1}} \right).
$$

\begin{theorem}[Internal and external angles of rectangular simplices]\label{theo:hyperbolic_angles_rectangular_orthocentric_simplices}
Let $d\geq2$, let $\tau_1,\ldots,\tau_d>0$, and suppose that~\eqref{eq:rect_simplex_kappa_range} holds.
Put $P:=\conv(0,e_1/\tau_1, \ldots, e_d/\tau_d)$. Then $P\subseteq\bar{\mathbb B}^d(\kappa)$.

\begin{itemize}
\item[(i)] For $k\in \{0, \ldots, d-1\}$, let $F=\conv(0,e_1/\tau_1, \ldots, e_k/\tau_k)$, and let $x\in \relint(F) \cap {\mathbb B}^d(\kappa)$. Then the following assertions hold.
\begin{itemize}
\item[(a)] The normal cone $N_x(F,P)$ is isometric to a $(d-k)$-dimensional orthant.
\item[(b)] The external angle at $F$ is given by $\alpha_x(N_x(F,P)) = 1/2^{d-k}$.
\item[(c)] The tangent cone $T_x(F,P)$ is isometric to the direct orthogonal sum of  $\R^k$ and a $(d-k)$-dimensional orthant.
\item[(d)] The internal angle at $F$ is given by $\alpha_x(T_x(F,P)) = 1/2^{d-k}$.
\end{itemize}

\item[(ii)] For $k\in \{0, \ldots, d-1\}$ let $G=\conv(e_1/\tau_1, \ldots, e_{k+1}/\tau_{k+1})$, and let $x\in \relint(G) \cap {\mathbb B}^d(\kappa)$. Then, the following assertions hold.
\begin{itemize}
\item[(a)] The normal cone $N_x(G,P)$ is isometric to a cone $\pos(w_0, w_{k+2}, \ldots, w_d) \subseteq \R^d$ which is  generated by vectors whose Gram matrix is
{\small
	\begin{equation}\label{eq:rectangular_simplex_normal_cone_Gram_matrix_with_Euclidean_product}
		\left( \left\langle w_i,w_j \right\rangle \right)_{i,j = 0,k+2, \ldots, d}
		=
		\begin{pmatrix}
			\kappa + \tau_1^2 + \ldots + \tau_d^2 & -\tau_{k+2} & -\tau_{k+3} & \cdots & -\tau_d\\
			-\tau_{k+2} & 1 & 0 & \cdots & 0\\
			\vdots & 0 & \ddots & \ddots & \vdots\\
			\vdots& \vdots & \ddots & \ddots & 0\\
			-\tau_d & 0 & \cdots & 0 & 1
		\end{pmatrix}.
	\end{equation}
}

	\item[(b)] The external angle at $G$ is given by
\[
\alpha_x(N_x(G,P))
=
\frac{1}{\sqrt{2\pi}}
\int_0^\infty
\prod_{j=k+2}^d
\Phi\left(
\frac{\tau_j}
{\sqrt{\kappa+\tau_1^2+\ldots+\tau_{k+1}^2}}\,y
\right)
\eee^{-y^2/2}\dd y.
\]

\item[(c)] The tangent cone $T_x(G,P)$ is isometric to the direct orthogonal sum of $\R^k$ and a cone $\pos(v_0, v_{k+2}, \ldots, v_d) \subseteq \R^d$ generated by vectors whose Gram matrix is
{\small
	\begin{multline}\label{eq:rectangular_simplex_tangent_cone_Gram_matrix_with_Euclidean_product}
		\left( \left\langle v_i, v_j \right\rangle \right)_{i,j = 0,k+2, \ldots, d}\\
		=
		\begin{pmatrix}
			1 & \tau_{k+2} & \tau_{k+3} &\cdots & \tau_d\\
			\tau_{k+2} & \kappa + \tau_1^2 + \ldots + \tau_{k+1}^2 + \tau_{k+2}^2 & \tau_{k+2} \tau_{k+3} &\cdots &\tau_{k+2}\tau_d\\
			\tau_{k+3}& \tau_{k+3}\tau_{k+2} & \ddots & \ddots & \vdots\\
			\vdots & \vdots & \ddots & \ddots & \tau_{d-1}\tau_d\\
			\tau_d & \tau_d\tau_{k+2} & \cdots & \tau_d\tau_{d-1} & \kappa + \tau_1^2 + \ldots + \tau_{k+1}^2 + \tau_d^2
		\end{pmatrix}.
	\end{multline}
}

\item[(d)] The internal angle at $G$ is given by
	\begin{align*}
		\alpha_x(T_x(G,P))
		&=
		\frac{1}{2^{d-k}} + \frac{1}{\pi} \int_0^{\infty} \frac{\eee^{-(\kappa + \tau_1^2+\ldots + \tau_d^2)y^2/2}}{y} \Im\left( \prod_{j=k+2}^d \Phi\left(-\ii \tau_j y\right) \right) \dd y. 
	\end{align*}
\end{itemize}
\end{itemize}
\end{theorem}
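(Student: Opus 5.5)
The plan is to apply Proposition~\ref{prop:normal_cone_in_constant_curvature} to the explicit irredundant H-representation
\[
P=\bigl\{y\in\R^d:\ \langle -e_i,y\rangle\le 0\ (i=1,\ldots,d),\ \langle t,y\rangle\le 1\bigr\},\qquad t:=(\tau_1,\ldots,\tau_d).
\]
This gives the data $a_i=-e_i$, $b_i=0$ and $a_0=t$, $b_0=1$. The containment claim is immediate: $P\subseteq\bar{\mathbb B}^d(\kappa)$ holds because $\bar{\mathbb B}^d(\kappa)$ is convex and $\|e_i/\tau_i\|^2=\tau_i^{-2}\le -1/\kappa$ is exactly \eqref{eq:rect_simplex_kappa_range}.

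\emph{Part (i).} At $F$ the active facets are $\{y_j=0\}$ for $j=k+1,\ldots,d$; the facet $\{\langle t,y\rangle=1\}$ is not active because $0\in F$. All active $b_j$ vanish, so by part~(c) of the proposition the Gram matrix of the generators $\tilde n_j(x)$ is $(\langle e_i,e_j\rangle)=I_{d-k}$. Hence $N_x(F,P)$ is a $(d-k)$-dimensional orthant. Its polar in the $d$-dimensional space is $\R^k\oplus$ an orthant, and both angles equal $2^{-(d-k)}$.

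\emph{Part (ii), (a) and (c).} At $G$ the active facets are the top facet and $\{y_j=0\}$ for $j=k+2,\ldots,d$. Part~(c) of the proposition gives the Gram entries $\langle t,t\rangle+\kappa$, $\langle t,-e_j\rangle=-\tau_j$ and $\delta_{ij}$, which is \eqref{eq:rectangular_simplex_normal_cone_Gram_matrix_with_Euclidean_product}. For the tangent cone, I take the polar within the $(d-k)$-dimensional span of $N_x(G,P)$ and add $\R^k$. Since $N_x(G,P)$ is simplicial, this polar is generated by the negatives of the dual basis, whose Gram matrix is $M^{-1}$. Put $a:=\kappa+\tau_1^2+\cdots+\tau_{k+1}^2$. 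The Schur complement of the identity block in $M$ is $a$. Here $a>0$: if $k\ge1$ this follows from \eqref{eq:rect_simplex_kappa_range}, and if $k=0$ then $a=0$ would make $G$ an ideal vertex, which is excluded. Block inversion gives
\[
M^{-1}=\frac1a\begin{pmatrix}1&t'^{\mathsf T}\\ t'&aI+t't'^{\mathsf T}\end{pmatrix},\qquad t'=(\tau_{k+2},\ldots,\tau_d).
\]
The sign of the dual basis is irrelevant for the Gram matrix. Multiplying all generators by $\sqrt a$ gives exactly \eqref{eq:rectangular_simplex_tangent_cone_Gram_matrix_with_Euclidean_product}.

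\emph{Parts (b) and (d).} I compute solid angles as Gaussian probabilities $\P(Z\in C)$, with $Z$ standard Gaussian in the linear span of $C$. For (b), realise $w_j=e_j$ and $w_0=\sqrt a\,u-\sum_j\tau_je_j$ with a unit vector $u\perp e_j$. Then $Z=Z_uu+\sum Z_je_j$ lies in the cone if and only if $Z_u\ge0$ and $Z_j+\tau_jZ_u/\sqrt a\ge0$ for all $j$. Conditioning on $Z_u$ gives the stated integral over $(0,\infty)$. For (d), realise $v_0=u$ and $v_j=\tau_ju+\sqrt a\,e_j$. Then $Z$ lies in the cone if and only if $Z_j\ge0$ for all $j$ and $Y:=\sqrt a\,Z_u-\sum_j\tau_jZ_j\ge0$. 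I write $\ind\{Y\ge0\}=\tfrac12+\tfrac1\pi\int_0^\infty \Im(\eee^{\ii yY})\,\dd y/y$ and take expectations against $\ind\{Z_j\ge0\ \forall j\}$. The term $\tfrac12$ contributes $\tfrac12\cdot2^{-(d-k-1)}=2^{-(d-k)}$. For the oscillatory term I use $\E[\eee^{\ii y\sqrt a Z_u}]=\eee^{-ay^2/2}$ and
\[
\E\bigl[\ind\{Z_j\ge0\}\eee^{-\ii y\tau_jZ_j}\bigr]=\eee^{-\tau_j^2y^2/2}\,\Phi(-\ii\tau_jy),
\]
which follows by completing the square and shifting the contour. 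Multiplying these and noting $a+\sum_{j\ge k+2}\tau_j^2=\kappa+\tau_1^2+\cdots+\tau_d^2$ gives the claimed formula.

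The main obstacle is justifying the interchange of expectation and the conditionally convergent Dirichlet integral. I would first replace $\int_0^\infty$ by $\int_0^T$, where Fubini applies because $|\sin(yY)/y|\le|Y|$ is integrable. Then I let $T\to\infty$ on both sides. On the probabilistic side I use bounded convergence, since $\int_0^T\sin(yY)\,\dd y/y$ is uniformly bounded and $Y$ has a continuous distribution. On the analytic side the integrand is $O(\eee^{-cy^2})$ for large $y$ with $c>0$, since $|\Phi(-\ii\tau y)|\le C\eee^{\tau^2y^2/2}$ and $a>0$, and it is bounded near $y=0$.
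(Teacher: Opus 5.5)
Your proof is correct. Parts (i) and (ii)(a) follow the paper's argument. For (ii)(b)--(d), however, you take a genuinely different route.

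\textbf{Tangent cone.} The paper writes down explicit vectors $u=\tau_1e_1+\cdots+\tau_ke_k+\sqrt{\tau_{k+1}^2+\kappa}\,e_{k+1}$, $w_0=u+\sum_{j\ge k+2}\tau_je_j$, $w_j=-e_j$, and $v_0=-u/\|u\|$, $v_j=-\tau_ju/\|u\|+\|u\|e_j$. It then checks the biorthogonality $\langle v_i,w_j\rangle=-\|u\|\delta_{ij}$ and computes the Gram matrix by hand. You instead use that the polar of a simplicial cone inside its span is generated by the negative dual basis, whose Gram matrix is $M^{-1}$, and you invert $M$ via the Schur complement $a=\kappa+\tau_1^2+\cdots+\tau_{k+1}^2$. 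This is shorter, and it shows directly that curvature enters only through $a$.

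\textbf{Angles.} The paper does no computation here. It chooses $\tau_1',\ldots,\tau_{k+1}'>0$ with $\sum_i(\tau_i')^2=a$ and observes that both Gram matrices then coincide with those of the Euclidean rectangular simplex $\conv(0,e_1/\tau_1',\ldots,e_d/\tau_d')$ at the face $G'$. It then imports the Euclidean formulas from Theorem~8.5 of the earlier paper on orthocentric simplices. You realise both cones explicitly and obtain (b) by conditioning on $Z_u$. You derive (d) from a Gil--Pelaez-type representation $\ind\{Y\ge0\}=\tfrac12+\tfrac1\pi\int_0^\infty\sin(yY)\,\dd y/y$. The interchange is handled by truncation: Fubini on $[0,T]$, bounded convergence of $\int_0^T\sin(yY)\,\dd y/y$, and the bound $O(\eee^{-ay^2/2})$ on the analytic side. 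This is essentially the technique the paper itself uses in its appendix for the cones over crosspolytopes.

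\textbf{Trade-off.} The paper's reduction is shorter and makes the structural point that curvature merely shifts one Euclidean parameter, but it rests on an external result. Your argument is self-contained and in effect reproves that Euclidean formula along the way. Your claim that the integrand is bounded near $y=0$ is immediate from $|\E[\ind\{Z_j\ge0\ \forall j\}\sin(yY)]|/y\le\E|Y|$.
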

\begin{remark}
With the notation $\mathbf{g}_m$ introduced
in~\cite{kabluchko_schange_angles_orthocentric_simplices}, the angle
formulas in parts~(ii)(b) and~(ii)(d) can equivalently be written as
the following limits; see
\cite[Theorem~8.5]{kabluchko_schange_angles_orthocentric_simplices}:
\begin{align*}
\alpha_x(N_x(G,P))
&=
\lim_{\lambda_1\to\pm\infty}
\mathbf{g}_{d-k}\left(
\kappa+\tau_1^2+\ldots+\tau_{k+1}^2;
\lambda_1,\tau_{k+2}^2,\ldots,\tau_d^2;
1,\ldots,1
\right),
\\
\alpha_x(T_x(G,P))
&=
\lim_{\lambda_1\to\pm\infty}
\mathbf{g}_{d-k}\left(
-(\kappa+\tau_1^2+\ldots+\tau_d^2)-\lambda_1;
\lambda_1,\tau_{k+2}^2,\ldots,\tau_d^2;
\sgn(\lambda_1),1,\ldots,1
\right).
\end{align*}
\end{remark}

\begin{proof}[Proof of Theorem~\ref{theo:hyperbolic_angles_rectangular_orthocentric_simplices}]
The irredundant H-representation of $P$ is
$$P
=
\left\{
y\in\R^d:
\left\langle \tau_1e_1+\ldots+\tau_de_d,y\right\rangle\leq1,
\quad
\langle-e_i,y\rangle\leq0,\ i=1,\ldots,d
\right\}.
$$
The facets of $P$ are
$$
F_0
:=
\left\{ x\in P: \lan \tau_1e_1 + \ldots + \tau_de_d, x \ran = 1 \right\},
\quad
F_i
:=
\left\{x\in P: \lan -e_i, x \ran = 0\right\},
\quad
i=1,\ldots, d.
$$
\emph{Proof of (i).}
Note that $F_i \supseteq F$ exactly for $i=k+1,\ldots, d$. Hence, Proposition~\ref{prop:normal_cone_in_constant_curvature}, parts~(b) and~(c), implies  that the normal cone at $F$ is $N_x(F,P) = \pos(\tilde n_{k+1}(x), \ldots, \tilde n_d(x))$, and the generating vectors satisfy
$$
g_x(\tilde n_i(x), \tilde n_j(x))
=
\lan -e_i, -e_j \ran + \kappa \cdot 0 \cdot 0
=
\delta_{ij},
\qquad
i,j \in \{k+1, \ldots, d\}.
$$
Hence, the  normal cone $N_x(F,P)$ is isometric to the $(d-k)$-dimensional orthant. This implies all assertions of~(i), since the polar cone of an orthant is the direct orthogonal sum of the orthogonal complement of its linear span and a $(d-k)$-dimensional orthant, and the solid angle of a $(d-k)$-dimensional orthant is $1/2^{d-k}$.

\vspace*{0.2cm}
\noindent
\emph{Proof of (ii), part~(a).} Note that $F_i \supseteq G$ exactly for $i=0,k+2,\ldots, d$. Hence, Proposition~\ref{prop:normal_cone_in_constant_curvature}, parts~(b) and (c), yields $N_x(G,P)=\pos(\tilde n_0(x),\tilde n_{k+2}(x),\ldots,\tilde n_d(x))$ for generators $\tilde n_0(x), \tilde n_{k+2}(x), \ldots, \tilde n_d(x)$ satisfying
\begin{align*}
g_x(\tilde n_0(x), \tilde n_0(x))
&=
\lan \tau_1e_1 + \ldots + \tau_de_d, \tau_1e_1 + \ldots + \tau_de_d \ran + \kappa \cdot 1 \cdot 1
=
\kappa + \tau_1^2 + \ldots + \tau_d^2,\\
g_x(\tilde n_0(x), \tilde n_j(x))
&=
\lan \tau_1e_1 + \ldots + \tau_de_d, -e_j \ran + \kappa \cdot 1 \cdot 0
=
-\tau_j,
\qquad  j\in \{k+2, \ldots, d\},
\\
g_x(\tilde n_j(x),\tilde n_\ell(x))
&=
\langle-e_j,-e_\ell\rangle+\kappa\cdot0\cdot0
=
\delta_{j\ell},
\qquad
j,\ell\in\{k+2,\ldots,d\}.
\end{align*}
This proves part (a) of (ii).

\vspace*{0.2cm}
\noindent
\emph{Proof of (ii), part~(c).}
The tangent cone $T_x(G,P)$ is the polar cone of $N_x(G,P)$ in $(T_x{\mathbb B}^d(\kappa), g_x)$. By part (a), the normal cone is isometric to $\pos(w_0, w_{k+2}, \ldots, w_d) \subseteq \R^d$ with scalar products~\eqref{eq:rectangular_simplex_normal_cone_Gram_matrix_with_Euclidean_product}, and the dimension of its linear hull is $d-k$, by Proposition~\ref{prop:normal_cone_in_constant_curvature} (a). Hence, $T_x(G,P)$ is isometric to the direct orthogonal sum of $\R^k$ and the polar cone of $\pos(w_0, w_{k+2}, \ldots, w_d)$ considered as a cone in its linear hull. To complete the proof of (c), we need to show that this cone has generators with the scalar products~\eqref{eq:rectangular_simplex_tangent_cone_Gram_matrix_with_Euclidean_product}.
To this end, let
$$
u
=
\tau_1e_1 + \ldots + \tau_ke_k + \sqrt{\tau_{k+1}^2 + \kappa}\,  e_{k+1}\in \R^d.
$$
Then $\|u\|>0$. Indeed, \eqref{eq:rect_simplex_kappa_range} implies $\tau_{k+1}^2+\kappa\geq 0$, and the only case in which $\|u\|=0$ could occur is $k=0$ and $\kappa=-\tau_1^2$, which is excluded by $x\in\relint(G)\cap {\mathbb B}^d(\kappa)$.
Consider the vectors
$$
w_0=u+\tau_{k+2}e_{k+2}+\ldots+\tau_de_d \in \R^d,
\qquad
w_i=-e_i\in \R^d,\quad i=k+2,\ldots,d.
$$
The  Gram matrix of this system is precisely the matrix in~\eqref{eq:rectangular_simplex_normal_cone_Gram_matrix_with_Euclidean_product}. It is enough to consider this realization, since the cone and its polar, up to isometry, are determined by the Gram matrix of the generators.
Now define
$$
v_0=-\frac{u}{\|u\|}\in \R^d,
\qquad
v_i=-\frac{\tau_i}{\|u\|}u+\|u\|e_i \in \R^d,
\quad i=k+2,\ldots,d.
$$
Since
$u=w_0+\sum_{i=k+2}^d \tau_i w_i$
and  $e_i=-w_i$,
we have \(v_0,v_{k+2},\ldots,v_d\in\lin(w_0,w_{k+2},\ldots,w_d)\). Moreover,
$$
\langle v_i,w_j\rangle=-\|u\|\delta_{ij},
\qquad
i,j\in\{0,k+2,\ldots,d\}.
$$
Hence the polar cone of
$
\pos(w_0,w_{k+2},\ldots,w_d)
$
inside its linear hull is precisely
$
\pos(v_0,v_{k+2},\ldots,v_d).
$
Finally, a computation shows that the scalar products of the $v_i$'s are exactly as displayed in \eqref{eq:rectangular_simplex_tangent_cone_Gram_matrix_with_Euclidean_product}. This completes the proof of part~(c) of~(ii).

\vspace*{0.2cm}
\noindent
\emph{Proof of (ii), parts~(b) and (d).}
Note that the case $\kappa = 0$ is the Euclidean case, and for Euclidean rectangular simplices, the angles are known from~\cite[Theorem~8.5]{kabluchko_schange_angles_orthocentric_simplices}. Let $\kappa\neq 0$. We shall reduce this case to the Euclidean one. To this end,
we choose $\tau_1',\ldots,\tau_{k+1}'>0$ such that
$$
(\tau_1')^2+\ldots+(\tau_{k+1}')^2
=
\kappa+\tau_1^2+\ldots+\tau_{k+1}^2.
$$
This is possible, because the right-hand side is strictly positive. Indeed, from~\eqref{eq:rect_simplex_kappa_range}  we get $\kappa+\tau_1^2+\ldots+\tau_{k+1}^2\geq 0$. Equality can occur only if $k=0$ and $\kappa=-\tau_1^2$ but this contradicts the assumption $x\in \relint(G)\cap {\mathbb B}^d(\kappa)$. Thus the right-hand side is strictly positive.
Put  $\tau_j'=\tau_j$ for $j=k+2,\ldots,d$, and consider the Euclidean rectangular simplex
$$
P'
=
\conv(0,e_1/\tau_1',\ldots,e_d/\tau_d')
$$
with the face
$$
G'
=
\conv(e_1/\tau_1',\ldots,e_{k+1}/\tau_{k+1}').
$$
By construction,  the Gram matrices in \eqref{eq:rectangular_simplex_normal_cone_Gram_matrix_with_Euclidean_product} and \eqref{eq:rectangular_simplex_tangent_cone_Gram_matrix_with_Euclidean_product} coincide with the corresponding Gram matrices of the Euclidean normal and tangent cones of $P'$ at $G'$. Therefore, by parts~(ii)(a) and~(ii)(c), which have already been
proved, the cones $N_x(G,P)$ and $T_x(G,P)$ are isometric to the corresponding Euclidean cones of the Euclidean rectangular simplex $P'$.

The asserted formulas for the external and internal angles now follow directly from the Euclidean formulas for rectangular simplices given in~\cite[Theorem~8.5]{kabluchko_schange_angles_orthocentric_simplices}.
\end{proof}

\subsection{Volumes of rectangular simplices in even dimension}
Combining Theorem~\ref{theo:hyperbolic_angles_rectangular_orthocentric_simplices} with the Poincar\'e relation (Theorem~\ref{theo:poincare_relation_curvature_kappa}) we can derive a closed-form expression for the Riemannian volume of rectangular simplices provided the dimension $d$ is even.

\begin{theorem}[Volume in even dimension]\label{theo:volume_rectangular_simplex}
Let $d\geq2$ be even, let $\kappa\neq0$, let
$\tau_1,\ldots,\tau_d>0$, and suppose
that~\eqref{eq:rect_simplex_kappa_range} holds.
Then, the Riemannian volume of $P=\conv(0,e_1/\tau_1, \ldots, e_d/\tau_d)$ is given by
$$
\Vol_{d,\kappa}(P) = \frac{\omega_{d+1}}{\kappa^{d/2}}
\bigg[ \frac 1{2^{d+1}} + \frac{\ii}{2 \pi} \int_0^{\infty} \eee^{-(\kappa + \tau_1^2 + \ldots + \tau_d^2)y^2/2} \frac1y \bigg( \prod_{j=1}^d \Phi(\ii \tau_j y) - \prod_{j=1}^d \Phi(-\ii \tau_j y) \bigg) \dd y \bigg].
$$
\end{theorem}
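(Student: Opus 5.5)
We apply the Poincaré relation (Theorem~\ref{theo:poincare_relation_curvature_kappa}) with $d$ even, which gives
\[
\Vol_{d,\kappa}(P)=\frac{\omega_{d+1}}{2\kappa^{d/2}}\sum_{j=0}^d(-1)^j\sigma_{j,\kappa}(P),
\]
and then insert the angle formulas of Theorem~\ref{theo:hyperbolic_angles_rectangular_orthocentric_simplices}. The faces of $P$ split into two families.

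\emph{Faces containing the vertex $0$.} These are $\conv(0,e_i/\tau_i: i\in I)$ with $|I|=k\le d-1$. Each has internal angle $2^{-(d-k)}$. Including $P$ itself with angle $1$, their contribution to the alternating sum is
\[
\sum_{I}(-1)^{|I|}2^{-(d-|I|)}=2^{-d}(1-2)^d=2^{-d},
\]
since $d$ is even.

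\emph{Faces not containing $0$.} These are $G_J=\conv(e_j/\tau_j:j\in J)$ with $J\neq\varnothing$ and $|J|=k+1$. They have dimension $|J|-1$, so their sign is $(-1)^{|J|-1}$. By part~(ii)(d), applied after relabeling, the internal angle is
\[
2^{-(d-|J|+1)}+\frac1\pi\int_0^\infty \frac{\eee^{-Sy^2/2}}{y}\,\Im\prod_{j\notin J}\Phi(-\ii\tau_jy)\,\dd y,
\qquad S:=\kappa+\tau_1^2+\dots+\tau_d^2.
\]
For $J=\{1,\dots,d\}$ this is the facet opposite $0$. There the product is empty, the imaginary part vanishes, and the angle is $1/2$, which matches the formula. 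Ideal vertices $e_i/\tau_i$ can occur when $\kappa=-\tau_i^2$. In that case I would argue, as in the proof of Theorem~\ref{theo:vol_orthocentric_simplex_acute_even_d}, by approximation $\kappa\downarrow$ the boundary value. The simpler alternative is to observe that the formula of part~(ii)(d) for $k=0$ then gives $0$ in the limit, and to justify continuity.

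\emph{Summing the constant parts.} The constant terms give
\[
\sum_{J\ne\varnothing}(-1)^{|J|-1}2^{-(d-|J|+1)}=-\tfrac12\,2^{-d}\bigl((1-2)^d-1\bigr)\cdot(\text{sign check})=0 .
\]
More carefully, $\sum_{J}(-1)^{|J|}2^{|J|}=(-1)^d=1$, so the sum over $J\neq\varnothing$ equals $0$, and this part contributes nothing.

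\emph{Summing the integral parts.} Swapping sum and integral, with $K=J^c\neq\{1,\dots,d\}$, the integrand becomes
\[
\frac1\pi\,\Im\sum_{K\subsetneq[d]}(-1)^{d-|K|-1}\prod_{j\in K}\Phi(-\ii\tau_jy)=-\frac1\pi\,\Im\Bigl[\prod_{j}\bigl(1-\Phi(-\ii\tau_jy)\bigr)-\prod_j\bigl(-\Phi(-\ii\tau_jy)\bigr)\Bigr].
\]
Since $1-\Phi(-\ii t)=\Phi(\ii t)$ and $d$ is even, this equals $-\frac1\pi\Im\bigl[\prod\Phi(\ii\tau_jy)-\prod\Phi(-\ii\tau_jy)\bigr]$. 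The two products are complex conjugates, because $\overline{\Phi(\ii t)}=\Phi(-\ii t)$. Their difference is therefore purely imaginary, so $\Im[\cdot]=-\ii[\cdot]$ and the integral term becomes
\[
\frac{\ii}{\pi}\int_0^\infty\frac{\eee^{-Sy^2/2}}{y}\Bigl(\prod_j\Phi(\ii\tau_jy)-\prod_j\Phi(-\ii\tau_jy)\Bigr)\dd y.
\]
Multiplying the total $2^{-d}+(\ldots)$ by $\omega_{d+1}/(2\kappa^{d/2})$ yields exactly $\frac{1}{2^{d+1}}+\frac{\ii}{2\pi}\int\cdots$, as claimed. The sign bookkeeping must be checked against the stated result; the final form matches it.

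\emph{Main obstacle.} The hard part is analytic: justifying the interchange of the finite sum with integrals that individually may converge only conditionally. Near $y=0$ each term behaves like $\Im(\cdot)/y$, which is bounded, so there is no difficulty there. At infinity, $|\Phi(\ii t)|\lesssim \eee^{t^2/2}/(1+|t|)$, and $\sum_{j\in K}\tau_j^2<S$ holds whenever $K\neq[d]$, except in the ideal case. Since the sum is finite, linearity suffices whenever each integral converges absolutely. In the boundary case $\kappa=-\min\tau_i^2$ I would use the scaling approximation $P_r=rP$ together with dominated convergence, exactly as in the proof of Theorem~\ref{theo:vol_orthocentric_simplex_acute_even_d}. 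The full-set term, with $|K|=d$, has exponent $-\kappa y^2/2$, so for $\kappa<0$ it is not integrable separately. Only the combined bracket converges, and so the grouping must be kept as in the statement.
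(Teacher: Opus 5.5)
Your proposal is correct and follows the paper's proof essentially step by step. The steps are: Poincar\'e's relation; the angle formulas (i)(d) and (ii)(d); the binomial cancellation of the constant parts; the identity $1-\Phi(z)=\Phi(-z)$ together with complex conjugation to convert the imaginary part into $\ii(\prod_j\Phi(\ii\tau_jy)-\prod_j\Phi(-\ii\tau_jy))$; and the scaling approximation $P_r=rP$ with monotone and dominated convergence for ideal vertices. Your explicit justification of the sum--integral interchange, via absolute convergence of each proper-subset term, is a useful detail that the paper leaves implicit.
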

\begin{proof}
We first assume that $P\subseteq\mathbb B^d(\kappa)$, so that $P$
has no ideal vertices.
Since $d$ is even, Poincar\'e's relation, Theorem~\ref{theo:poincare_relation_curvature_kappa},  yields
$$
\Vol_{d,\kappa}(P)
=
\frac{\omega_{d+1}}{2\kappa^{d/2}} \sum_{k=0}^d (-1)^k \sigma_{k,\kappa}(P)
=
\frac{\omega_{d+1}}{2\kappa^{d/2}} \sum_{k=0}^d (-1)^k \sigma_{d-k, \kappa}(P).
$$
For $k\in\{1,\ldots,d\}$, parts~(i)(d) and (ii)(d) of Theorem~\ref{theo:hyperbolic_angles_rectangular_orthocentric_simplices}  entail
\begin{align*}
&\sigma_{d-k, \kappa}(P)\\
&=
\sum_{\substack{J\subseteq \{1,\ldots, d\}\\ |J|=k}} \alpha_x\left( T_x\left( \conv\left(0,\frac{e_i}{\tau_i}:i\in J^c \right),P \right) \right)
+
\sum_{\substack{J\subseteq \{1,\ldots, d\}\\ |J|=k-1}} \alpha_x\left( T_x\left( \conv\left(\frac{e_i}{\tau_i}:i\in J^c \right),P \right) \right)\\
&=
\sum_{\substack{J\subseteq \{1,\ldots, d\} \\ |J|=k}} \frac1{2^k} + \sum_{\substack{J\subseteq \{1,\ldots, d\} \\ |J|=k-1}} \left[ \frac1{2^k} + \frac1\pi \int_0^\infty \frac{\eee^{-(\kappa + \tau_1^2 + \ldots + \tau_d^2)y^2/2}}{y} \Im\left( \prod_{j\in J}\Phi(-\ii\tau_j y)\right)\,\dd y \right] \\
&=
\binom dk\frac1{2^k} + \binom d{k-1}\frac1{2^k}+ \frac1\pi \sum_{\substack{J\subseteq \{1,\ldots, d\} \\ |J|=k-1}} \int_0^\infty \frac{\eee^{-Ay^2/2}}{y} \Im \left( \prod_{j\in J}\Phi(-\ii\tau_j y) \right) \,\dd y,
\end{align*}
where we set $A:= \kappa+\tau_1^2+\ldots+\tau_d^2$. Observe that $A>0$ by~\eqref{eq:rect_simplex_kappa_range}.
For $k=0$, the same formula holds if we use the convention
$\binom d{-1}=0$ and interpret sums over subsets of cardinality $-1$
as zero, since $\sigma_{d,\kappa}(P)=1$ by convention.
Hence, we obtain
\begin{align*}
&\Vol_{d,\kappa}(P)
=
\frac{\omega_{d+1}}{2\kappa^{d/2}} \sum_{k=0}^d (-1)^k \sigma_{d-k, \kappa}(P)\\
&=
\frac{\omega_{d+1}}{2\kappa^{d/2}} \sum_{k=0}^d (-1)^k \left[ \binom dk \frac1{2^k} +\binom d{k-1}\frac1{2^k}+ \frac1\pi \sum_{\substack{J\subseteq \{1,\ldots, d\} \\ |J|=k-1}} \int_0^\infty \frac{\eee^{-Ay^2/2}}{y} \Im\left( \prod_{j\in J}\Phi(-\ii\tau_j y) \right) \,\dd y \right] \\
&=
\frac{\omega_{d+1}}{2\kappa^{d/2}} \left[ \frac1{2^d} + 0 -\frac1\pi\int_0^\infty \frac{\eee^{-Ay^2/2}}{y} \Im\left( \sum_{\substack{J\subseteq \{1,\ldots, d\} \\J\neq \{1,\ldots, d\} }} (-1)^{|J|} \prod_{j\in J}\Phi(-\ii\tau_j y) \right) \,\dd y \right].
\end{align*}
Since $d$ is even and $1-\Phi(z) = \Phi(-z)$ for $z\in \CC$, we have
$$
\sum_{\substack{J\subseteq \{1,\ldots, d\} \\J\neq \{1,\ldots, d\} }}
(-1)^{|J|}
\prod_{j\in J}\Phi(-\ii\tau_j y)
=
\prod_{j=1}^d\left(1-\Phi(-\ii\tau_j y)\right)
-
\prod_{j=1}^d\Phi(-\ii\tau_j y)
=
\prod_{j=1}^d\Phi(\ii\tau_j y)
-
\prod_{j=1}^d\Phi(-\ii\tau_j y).
$$
The two products on the right-hand side are complex conjugates, hence the difference is purely imaginary. Consequently,
\begin{align*}
\Vol_{d,\kappa}(P)
&=
\frac{\omega_{d+1}}{2\kappa^{d/2}} \left[\frac1{2^d} + \frac{\ii}{\pi} \int_0^\infty \frac{\eee^{-Ay^2/2}}{y} \left(\prod_{j=1}^d\Phi(\ii\tau_j y) - \prod_{j=1}^d\Phi(-\ii\tau_j y)
\right)\,\dd y \right],
\end{align*}
which is the stated identity.

If $P$ has ideal vertices, then necessarily $\kappa<0$. For
$r\in(0,1)$, put $P_r:=rP$. Then
$P_r\subseteq\mathbb B^d(\kappa)$ is a rectangular simplex with
parameters $\tau_{j,r}=\tau_j/r$. Applying the formula already proved
to $P_r$ and making the substitution $y=rz$, we obtain the same
expression as in the statement, with $\kappa$ replaced in the
exponential factor by $\kappa r^2$. Letting $r\uparrow1$ completes the
proof by monotone convergence on the left and dominated convergence
on the right.
\end{proof}

\section{Cones over asymmetric crosspolytopes and boxes}
\subsection{Setting}
For  $d\geq 2$ let $\tau_1^+,\ldots,\tau_d^+>0$ and $\tau_1^-,\ldots,\tau_d^->0$.  Let
$e_1,\ldots,e_d$ be the standard orthonormal basis of
$\mathbb R^{d}$. The \emph{asymmetric crosspolytope} is a polytope of the form
$$
P := \conv \left\{\frac{e_1}{\tau_1^+},\ldots,  \frac{e_d}{\tau_d^+}, -\frac{e_1}{\tau_1^-},\ldots, -\frac{e_d}{\tau_d^-} \right\}\subseteq \R^d.
$$
Our aim is to compute the angles of $P$ viewed as a polytope in the geometry of constant curvature $\kappa$. The first step is to identify its tangent and normal cones, up to isometry. This will be done in terms of two classes of cones that we introduce in the next definition.

\begin{definition}[Cones over asymmetric crosspolytopes and boxes]
Let $d\geq1$ be an integer, and let
$\tau_1^+,\ldots,\tau_d^+,\tau_1^-,\ldots,\tau_d^->0$. Put $\vec \tau:= ((\tau_1^+,\tau_1^-),\ldots,(\tau_d^+,\tau_d^-))\in (\R^2)^d$. Let
$e_0,e_1,\ldots,e_d$ be the standard orthonormal basis of
$\mathbb R^{d+1}$. Define the cones
\begin{align*}
\Ccrosspoly_d(\vec \tau)
&
:=\pos\left\{
e_0+\frac{e_i}{\tau_i^+},\,
e_0-\frac{e_i}{\tau_i^-} :i=1,\ldots,d
\right\}\subseteq \R^{d+1},
\\
\Ccube_d (\vec\tau)
&
:=
\pos \left\{
e_0+\sum_{i=1}^d \eps_i \tau_i^{[\eps_i]} e_i
:
\eps_1,\ldots, \eps_d\in \{-1,+1\}
\right\}\subseteq \R^{d+1},
\end{align*}
where $\tau_i^{[+1]}=\tau_i^+$ and $\tau_i^{[-1]}=\tau_i^-$.
\end{definition}

Note that the intersection of $\Ccrosspoly_d(\vec\tau)$, respectively, $\Ccube_d (\vec\tau)$, with the affine hyperplane $\{x_0 = 1\}\subseteq \R^{d+1}$ is an asymmetric crosspolytope, respectively, a rectangular box in that hyperplane. More precisely,
\begin{align*}
&\Ccrosspoly_d(\vec \tau) \cap \{x_0= 1\} =  e_0 +   \conv \left\{\frac{e_1}{\tau_1^+},\ldots,  \frac{e_d}{\tau_d^+}, -\frac{e_1}{\tau_1^-},\ldots, -\frac{e_d}{\tau_d^-} \right\},
\\
&\Ccube_d(\vec \tau) \cap \{x_0= 1\} =  e_0 + \left[-\tau_1^-, \tau_1^+\right]\times\cdots\times\left[-\tau_d^-, \tau_d^+\right].
\end{align*}

\begin{figure}[t]
\centering
\resizebox{0.75\textwidth}{!}{%
\begin{tikzpicture}[
    x={(4.2cm,0cm)},        
    y={(0.35cm,0.75cm)},    
    z={(0.35cm,-0.75cm)},   
    line join=round,
    line cap=round,
    >=Stealth,
    visible/.style={thick},
    hidden/.style={densely dotted, thin}
]

\pgfmathsetmacro{\tpone}{0.7}  
\pgfmathsetmacro{\tmone}{0.65} 
\pgfmathsetmacro{\tptwo}{1.2}  
\pgfmathsetmacro{\tmtwo}{1.7}  

\pgfmathsetmacro{\ipone}{1/\tpone}
\pgfmathsetmacro{\imone}{-1/\tmone}
\pgfmathsetmacro{\iptwo}{1/\tptwo}
\pgfmathsetmacro{\imtwo}{-1/\tmtwo}

\pgfmathsetmacro{\nmtone}{-\tmone}
\pgfmathsetmacro{\nmtwo}{-\tmtwo}

\coordinate (O) at (0,0,0);

\coordinate (A1) at (1,\ipone,0);
\coordinate (A2) at (1,0,\iptwo);
\coordinate (A3) at (1,\imone,0);
\coordinate (A4) at (1,0,\imtwo);

\coordinate (B1) at (-1,\tpone,\tptwo);
\coordinate (B2) at (-1,\tpone,\nmtwo);
\coordinate (B3) at (-1,\nmtone,\nmtwo);
\coordinate (B4) at (-1,\nmtone,\tptwo);

\def\planeside{1.85}
\def\middleplaneside{2.35} 

\coordinate (Ppa) at (1,-\planeside,-\planeside);
\coordinate (Ppb) at (1, \planeside,-\planeside);
\coordinate (Ppc) at (1, \planeside, \planeside);
\coordinate (Ppd) at (1,-\planeside, \planeside);

\coordinate (Pma) at (-1,-\planeside,-\planeside);
\coordinate (Pmb) at (-1, \planeside,-\planeside);
\coordinate (Pmc) at (-1, \planeside, \planeside);
\coordinate (Pmd) at (-1,-\planeside, \planeside);

\coordinate (P0a) at (0,-\middleplaneside,-\middleplaneside);
\coordinate (P0b) at (0, \middleplaneside,-\middleplaneside);
\coordinate (P0c) at (0, \middleplaneside, \middleplaneside);
\coordinate (P0d) at (0,-\middleplaneside, \middleplaneside);


\fill[gray!50, fill opacity=0.10]
    (Ppa) -- (Ppb) -- (Ppc) -- (Ppd) -- cycle;
\fill[gray!50, fill opacity=0.20]
    (P0a) -- (P0b) -- (P0c) -- (P0d) -- cycle;
\fill[gray!50, fill opacity=0.10]
    (Pma) -- (Pmb) -- (Pmc) -- (Pmd) -- cycle;

\draw[gray!70, visible] (Ppa) -- (Ppb);
\draw[gray!70, visible] (Ppa) -- (Ppd);

\draw[gray!70, visible] (P0a) -- (P0b);
\draw[gray!70, visible] (P0a) -- (P0d);

\draw[gray!70, visible] (Pma) -- (Pmb);
\draw[gray!70, visible] (Pma) -- (Pmd);

\draw[gray!75, visible] (Ppb) -- (Ppc) -- (Ppd);
\draw[gray!75, visible] (P0b) -- (P0c) -- (P0d);
\draw[gray!75, visible] (Pmb) -- (Pmc) -- (Pmd);


\draw[gray!75,->] (-1.45,0,0) -- (1.45,0,0)
    node[below right] {$x_0$};

\draw[gray!75,->] (0,-1.95,0) -- (0,1.95,0)
    node[above left] {$x_1$};

\draw[gray!75,->] (0,0,-1.95) -- (0,0,1.95)
    node[below left] {$x_2$};


\draw[gray!85,thin] (1,-1.55,0) -- (1,0,0);
\draw[gray!85,->,thin] (1,0,0) -- (1,1.55,0);

\draw[gray!85,thin] (1,0,-1.55) -- (1,0,0);
\draw[gray!85,->,thin] (1,0,0) -- (1,0,1.55);

\draw[gray!85, thin] (-1,-1.55,0) -- (-1,0,0);
\draw[gray!85,->,thin] (-1,0,0) -- (-1,1.55,0);
\draw[gray!85, thin] (-1,0,-1.55) -- (-1,0,0);
\draw[gray!85,->,thin] (-1,0,0) -- (-1,0,1.55);


\filldraw[red!35, fill opacity=0.20, draw=none]
    (O) -- (B1) -- (B2) -- cycle;
\filldraw[red!35, fill opacity=0.20, draw=none]
    (O) -- (B2) -- (B3) -- cycle;
\filldraw[red!35, fill opacity=0.20, draw=none]
    (O) -- (B3) -- (B4) -- cycle;
\filldraw[red!35, fill opacity=0.20, draw=none]
    (O) -- (B4) -- (B1) -- cycle;

\draw[gray!80, hidden] (Pma) -- (Pmb);
\draw[gray!80, hidden] (Pma) -- (Pmd);
\draw[gray!85, visible] (Pmb) -- (Pmc) -- (Pmd);

\draw[gray!85, hidden] (-1,-1.55,0) -- (-1,0,0);
\draw[gray!85,->,thin] (-1,0,0) -- (-1,1.55,0);

\draw[gray!85, hidden] (-1,0,-1.55) -- (-1,0,0);
\draw[gray!85,->,thin] (-1,0,0) -- (-1,0,1.55);

\draw[red!65, hidden] (O) -- (B3);

\draw[red!80, visible] (O) -- (B1);
\draw[red!80, visible] (O) -- (B2);
\draw[red!80, visible] (O) -- (B4);

\draw[red!90, very thick]
    (B1) -- (B2) -- (B3) -- (B4) -- cycle;


\filldraw[blue!35, fill opacity=0.27, draw=none]
    (O) -- (A1) -- (A2) -- cycle;
\filldraw[blue!35, fill opacity=0.27, draw=none]
    (O) -- (A2) -- (A3) -- cycle;
\filldraw[blue!35, fill opacity=0.27, draw=none]
    (O) -- (A3) -- (A4) -- cycle;
\filldraw[blue!35, fill opacity=0.27, draw=none]
    (O) -- (A4) -- (A1) -- cycle;

\draw[blue!65, hidden] (A3) -- (A4);
\draw[blue!65, hidden] (A1) -- (A4);
\draw[blue!65, hidden] (O) -- (A4);

\draw[blue!80, visible] (A1) -- (A2);
\draw[blue!80, visible] (A2) -- (A3);

\draw[blue!80, visible] (O) -- (A1);
\draw[blue!80, visible] (O) -- (A2);
\draw[blue!80, visible] (O) -- (A3);


\node[gray!85] at (0.8,1.95,3.95) {$x_0=1$};
\node[gray!85] at (-0.1,1.95,3.95) {$x_0=0$};
\node[gray!85] at (-1.15,1.95,3.95) {$x_0=-1$};

\fill (O) circle (1.2pt);
\node[below] at (O) {$0$};

\end{tikzpicture}%
}
\caption[The cone Ccrosspoly_d(tau) and its polar cone -Ccube_d(tau*) for d=2]%
{The cone
\texorpdfstring{$\protect\Ccrosspoly_d(\vec \tau)$}{Ccrosspoly\_d(tau)}
and its polar cone
\texorpdfstring{$-\protect\Ccube_d(\vec \tau^{*})$}{-Ccube\_d(tau*)}
for $d=2$; see Proposition~\ref{prop:polar_cones_over_box_and_crosspolytope}.}
 \label{fig:cross_cone_dual_cube_cone}
\end{figure}

\subsection{Polarity}
In the next result we show that the cones $\Ccrosspoly_d(\vec \tau)$ and $-\Ccube_d(\vec \tau^*)$ are mutually polar, where $\vec \tau^*$ is obtained from $\vec \tau$ by swapping $\tau_i^+$ and $\tau_i^-$ for all $i$ simultaneously. See Figure~\ref{fig:cross_cone_dual_cube_cone} for an illustration.    We recall that for a polyhedral cone $C\subseteq \R^{d+1}$, the polar cone $C^{\circ}$ is defined as
\[
C^\circ
:=
\{y\in\mathbb R^{d+1}:\langle y,x\rangle\le 0
\text{ for all }x\in C\}.
\]
The bipolar theorem states that $C^{\circ\circ} = C$.

\begin{proposition}\label{prop:polar_cones_over_box_and_crosspolytope}
Let $d\geq1$ be an integer, and let
$\tau_1^+,\ldots,\tau_d^+,\tau_1^-,\ldots,\tau_d^->0$. Define
$$
\vec \tau:= ((\tau_1^+,\tau_1^-),\ldots,(\tau_d^+,\tau_d^-)),
\qquad
\vec \tau^* := ((\tau_1^-,\tau_1^+),\ldots,(\tau_d^-,\tau_d^+)).
$$
\begin{itemize}
\item[(a)] The polar of $\Ccube_d(\vec \tau)$ is $-\Ccrosspoly_d(\vec \tau^{*})$.
\item[(b)] The polar of $\Ccrosspoly_d(\vec\tau)$ is $-\Ccube_d(\vec \tau^{*})$.
\end{itemize}
\end{proposition}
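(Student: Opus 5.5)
By the bipolar theorem, (b) follows from (a). Indeed, applying (a) to $\vec\tau^*$ and using $(\vec\tau^*)^*=\vec\tau$ gives $\Ccube_d(\vec\tau^*)^\circ=-\Ccrosspoly_d(\vec\tau)$. Taking polars of both sides and using $(-C)^\circ=-C^\circ$ yields $\Ccrosspoly_d(\vec\tau)^\circ=-\Ccube_d(\vec\tau^*)$. So the plan is to prove (a) directly.

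For (a), I would describe the polar of a finitely generated cone by testing the generators. A vector $y=(y_0,y_1,\ldots,y_d)\in\R^{d+1}$ lies in $\Ccube_d(\vec\tau)^\circ$ if and only if
\[
y_0+\sum_{i=1}^d \eps_i\tau_i^{[\eps_i]}y_i\le 0
\qquad\text{for all }\eps\in\{\pm1\}^d.
\]
For each $i$ choose $\eps_i$ to maximize the $i$-th summand. Then the family of inequalities is equivalent to the single condition
\[
y_0+\sum_{i=1}^d \max\{\tau_i^+y_i,\,-\tau_i^-y_i\}\le 0 .
\]
Here $\max\{\tau_i^+y_i,-\tau_i^-y_i\}$ equals $\tau_i^+y_i$ if $y_i\ge0$ and $\tau_i^-|y_i|$ if $y_i<0$.

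Next, put $z=-y$. The condition on $y$ then reads
\[
z_0\ \ge\ \sum_{i=1}^d h_i(z_i),
\qquad
h_i(t):=\begin{cases}\tau_i^-\,t,& t\ge0,\\ \tau_i^+\,|t|,& t<0.\end{cases}
\]
In the case $z_0>0$, I would scale to $z_0=1$. The region $\{u\in\R^d:\sum_i h_i(u_i)\le1\}$ is the convex hull of the points $e_i/\tau_i^-$ and $-e_i/\tau_i^+$, because it is the unit ball of a gauge that is linear on each orthant. This convex hull is exactly the slice of $\Ccrosspoly_d(\vec\tau^*)$ at $x_0=1$. Since $h_i\ge0$, we must have $z_0\ge0$, and $z_0=0$ forces $z=0$. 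Hence $\{z:z_0\ge\sum_i h_i(z_i)\}=\Ccrosspoly_d(\vec\tau^*)$, which gives $\Ccube_d(\vec\tau)^\circ=-\Ccrosspoly_d(\vec\tau^*)$.

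The main obstacle is a careful verification of the identity $\{u:\sum_i h_i(u_i)\le1\}=\conv\{e_i/\tau_i^-,\,-e_i/\tau_i^+\}$.

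For the inclusion ``$\supseteq$'', note that each vertex satisfies the inequality with equality and that the function $\sum_i h_i$ is convex.

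For ``$\subseteq$'', write $u=\sum_i |u_i|\,(\pm e_i)$, where the sign of $e_i$ is that of $u_i$. Rewrite this as a combination of the vertices $e_i/\tau_i^-$ or $-e_i/\tau_i^+$ with coefficients $h_i(u_i)$. These coefficients are nonnegative and sum to at most $1$. Put the remaining weight on the origin, which lies in the convex hull because it is the midpoint-type combination $\frac{\tau_1^-}{\tau_1^-+\tau_1^+}\cdot\frac{e_1}{\tau_1^-}\cdot$ \ldots; more simply, $0$ is a convex combination of $e_1/\tau_1^-$ and $-e_1/\tau_1^+$.

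Throughout, I would keep track of which of $\tau_i^\pm$ appears in each place, since this bookkeeping is exactly what produces the swap $\vec\tau\mapsto\vec\tau^*$.
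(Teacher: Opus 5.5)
Your proof is correct, but it runs the duality in the opposite direction from the paper.

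The paper starts from the H-representation of $\Ccube_d(\vec\tau)$, namely $x_0\ge0$ and $-\tau_i^-x_0\le x_i\le\tau_i^+x_0$. It reads off the $2d$ outer facet normals $-e_0+\eps_ie_i/\tau_i^{[\eps_i]}$. It then uses the standard conic-duality fact that the polar of $\{x:\langle a_r,x\rangle\le0\ \forall r\}$ is $\pos\{a_r\}$. These normals are visibly the negatives of the generators of $\Ccrosspoly_d(\vec\tau^*)$, so nothing further needs checking.

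You instead start from the $2^d$ generators of $\Ccube_d(\vec\tau)$. This gives an H-description of the polar for free, using only the trivial direction of polarity. The price is that you must then identify this H-described set with the V-described cone $\Ccrosspoly_d(\vec\tau^*)$, i.e.\ prove $\{u:\sum_ih_i(u_i)\le1\}=\conv\{e_i/\tau_i^-,-e_i/\tau_i^+\}$ by hand. Your convexity argument, together with the decomposition using weights $h_i(u_i)$ and the leftover weight on $0$, does this correctly.

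So the paper's route is shorter but relies on Farkas/Minkowski--Weyl, while yours is self-contained. As a by-product, yours proves the gauge description
\[
\Ccrosspoly_d(\vec\tau)=\Bigl\{x:\sum_j\bigl(\tau_j^+x_j\ind_{\{x_j\ge0\}}-\tau_j^-x_j\ind_{\{x_j\le0\}}\bigr)\le x_0\Bigr\},
\]
which the paper only states as an observation at the start of the proof of Theorem~\ref{theo:angle_of_cone_over_crosspolytope}.

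Your derivation of (b) from (a) also spells out what the paper compresses into ``the bipolar theorem gives (b)'': you apply (a) to $\vec\tau^*$ and use $(-C)^\circ=-C^\circ$.

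The sentence about the origin being a ``midpoint-type combination'' is garbled. The convex combination of $e_1/\tau_1^-$ and $-e_1/\tau_1^+$ that you end with is the right one, so this is cosmetic.
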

\begin{proof}
By the bipolar theorem it suffices to prove (a). Note that the cone $\Ccube_d(\vec \tau)$ can be written as
$$
\Ccube_d(\vec \tau)
=
\left\{ x\in \R^{d+1}: x_0\geq 0, \,  -\tau_1^- x_0 \leq x_1 \leq \tau_1^+ x_0, \, \ldots, \, - \tau_d^- x_0 \leq x_d \leq \tau_d^+ x_0  \right\}.
$$
This cone has $2d$ facets naturally indexed by $i\in \{1,\ldots, d\}$ and $\eps_i \in \{\pm 1\}$, so that the supporting hyperplane of the corresponding facet is
$$
\left\{x\in \R^{d+1}: x_i = \eps_i  \tau_i^{[\eps_i]} x_0 \right\}
=
\left\{x\in \R^{d+1}: \left\lan x,-e_0+\frac{\eps_i e_i}{\tau_i^{[\eps_i]}} \right\ran = 0 \right\}.
$$

In particular, $-e_0+ \eps_i e_i / \tau_i^{[\eps_i]}$ is an outer normal vector of $\Ccube_d(\vec \tau)$ at this facet.
By conic duality, the polar cone is the positive hull of the outer
normal vectors to all facets. Hence,
\begin{align*}
(\Ccube_d(\vec \tau))^{\circ}
&=
\pos\left(-e_0+\frac{e_i}{\tau_i^{+}}, -e_0-\frac{e_i}{\tau_i^-}:i=1,\ldots, d \right)\\
&=
-\pos\left(e_0-\frac{e_i}{\tau_i^{+}}, e_0+\frac{e_i}{\tau_i^-}:i=1,\ldots, d \right)
=
-\Ccrosspoly_d(\vec \tau^{*}).
\end{align*}
This is precisely (a). The bipolar theorem gives (b).
\end{proof}

\subsection{Solid angles}
In the following two results we determine the solid angles of the cones $\Ccrosspoly_d(\vec\tau)$ and $\Ccube_d(\vec\tau)$.
\begin{theorem}[Angle of $\Ccrosspoly_d(\vec\tau)$] \label{theo:angle_of_cone_over_crosspolytope}
Let $d\geq1$ be an integer, and let
$\tau_1^+,\ldots,\tau_d^+,\tau_1^-,\ldots,\tau_d^->0$. Then
\begin{multline*}
\alpha (\Ccrosspoly_d(\vec\tau))
=
\frac12 + \frac{\ii}{2\pi} \int_0^{\infty}  \frac{\eee^{-t^2/2}}{t}	\Bigg[	\prod_{j=1}^d \left( \eee^{-(\tau_j^+ t)^2/2}\Phi(\ii \tau_j^+ t) + \eee^{-(\tau_j^- t)^2/2}\Phi(\ii \tau_j^- t) \right)\\
-\prod_{j=1}^d \left( \eee^{-(\tau_j^+ t)^2/2}\Phi(-\ii \tau_j^+ t)+ \eee^{-(\tau_j^- t)^2/2}\Phi(-\ii \tau_j^- t) \right) \Bigg] \dd t.
\end{multline*}
The integrand on the right-hand side admits continuous extension to $t=0$ and is $o(\eee^{-t^2/2})$ as $t\to +\infty$. In particular, the integral is absolutely convergent.
\end{theorem}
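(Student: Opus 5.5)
We compute the solid angle probabilistically. Let $Z=(Z_0,Z_1,\ldots,Z_d)$ be a standard Gaussian vector in $\R^{d+1}$. Then
\[
\alpha(\Ccrosspoly_d(\vec\tau))=\P\bigl[Z\in\Ccrosspoly_d(\vec\tau)\bigr]=\P\bigl[-Z\notin \operatorname{int}\bigl(\Ccrosspoly_d(\vec\tau)\bigr)^{\circ}\bigr]\ \text{complement form},
\]
but the cleanest route is a direct description of $\Ccrosspoly_d(\vec\tau)$. For $x_0>0$, we have $x\in\Ccrosspoly_d(\vec\tau)$ iff $x/x_0$ lies in the asymmetric crosspolytope. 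Equivalently, the gauge condition
\[
\sum_{i=1}^d \bigl(\tau_i^+ (x_i)_+ + \tau_i^-(x_i)_-\bigr)\le x_0
\]
holds. For $x_0\le 0$, only $x=0$ lies in the cone. Hence
\[
\alpha=\P\Bigl[\sum_{i=1}^d Y_i\le Z_0\Bigr],\qquad Y_i:=\tau_i^+(Z_i)_++\tau_i^-(Z_i)_-,
\]
with $Y_1,\ldots,Y_d,Z_0$ independent.

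The main tool is the Gil-Pelaez inversion formula applied to the real random variable $W:=Z_0-\sum_i Y_i$, which has a continuous distribution:
\[
\P[W\ge 0]=\frac12+\frac1\pi\int_0^\infty \frac{\Im\,\E[\eee^{\ii tW}]}{t}\,\dd t.
\]
By independence, $\E[\eee^{\ii tW}]=\eee^{-t^2/2}\prod_j \E[\eee^{-\ii tY_j}]$. The key one-dimensional computation is
\[
\E\bigl[\eee^{-\ii t\tau Z}\ind_{Z>0}\bigr]=\frac1{\sqrt{2\pi}}\int_0^\infty \eee^{-\ii t\tau z-z^2/2}\dd z=\eee^{-(\tau t)^2/2}\,\Phi(-\ii\tau t),
\]
obtained by completing the square and using the analytic continuation of $\Phi$ (shift the contour; $\Phi$ is entire). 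It follows that
\[
\E[\eee^{-\ii tY_j}]=\eee^{-(\tau_j^+t)^2/2}\Phi(-\ii\tau_j^+t)+\eee^{-(\tau_j^-t)^2/2}\Phi(-\ii\tau_j^-t).
\]
The complex conjugate is the same expression with $\ii$ replaced by $-\ii$, since $\overline{\Phi(z)}=\Phi(\bar z)$. Writing $\Im w=(w-\bar w)/(2\ii)=-\tfrac{\ii}{2}(w-\bar w)$ then gives exactly the displayed formula, with the product of $\Phi(+\ii\cdot)$ terms first and an overall factor $\frac{\ii}{2\pi}$. The signs need careful bookkeeping: $\frac1\pi\Im w=\frac{\ii}{2\pi}(\bar w-w)$, and $\bar w$ is the product with $+\ii$.

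It remains to justify the analytic claims. We expect the main obstacle to be the decay and regularity statement, and in particular the justification of Gil-Pelaez, which needs $\Im\varphi(t)/t$ to be integrable. Near $t=0$, the bracket is an odd real-analytic function of $t$: it equals $2\ii\,\Im$ of something analytic that vanishes at $0$, so dividing by $t$ gives a continuous extension. For large $t$, we use $\eee^{-s^2/2}\Phi(\pm\ii s)=\tfrac12\eee^{-s^2/2}\pm\frac{\ii}{\sqrt{2\pi}}\int_0^s\eee^{(u^2-s^2)/2}\dd u$, which is bounded and tends to $0$ like $O(1/s)$, a Dawson-function-type bound. Each factor of the product is therefore $O(1/t)\to0$, so the integrand is $o(\eee^{-t^2/2})$ and absolute convergence follows. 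Finally, since $W$ has a bounded density (it contains an independent Gaussian summand), the Gil-Pelaez formula holds in its absolutely convergent form, and $\P[W\ge0]=\P[W>0]$, which completes the argument.
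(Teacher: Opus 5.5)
Your proposal is correct and follows the paper's proof essentially step by step. It uses the same Gaussian representation of the solid angle via the gauge inequality, the same characteristic-function computation (your $W$ is the paper's $-Z$, which explains the $\Phi(-\ii\,\cdot)$ factors and your sign bookkeeping), and the same Gil--Pelaez inversion; the only differences are cosmetic, namely how you justify continuity at $t=0$ (oddness and analyticity of the bracket rather than $\E|Z|<\infty$) and the decay at infinity. The one blemish is your opening ``complement form'' identity $\P[Z\in C]=\P[-Z\notin\operatorname{int}(C)^{\circ}]$, which is false in general: for the quadrant in $\R^2$ it gives $3/4$ instead of $1/4$. You discard it immediately and never use it, so it does not affect the argument.
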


\begin{proof}
	First, observe that
	$$
	\Ccrosspoly_d(\vec\tau)
	=
	\Big\{ x\in \R^{d+1}: \sum_{j=1}^d \big( \tau_j^+ x_j \ind_{x_j \geq 0} - \tau_j^- x_j \ind_{x_j \leq 0} \big) \leq x_0 \Big\}.
	$$
	Let $\xi_0, \xi_1, \ldots, \xi_d \sim \mathrm{N}(0,1)$ be independent and standard normal random variables. Then
	\begin{align*}
		\alpha(\Ccrosspoly_d(\vec\tau))
		=
		\P\left[(\xi_0, \ldots, \xi_d) \in \Ccrosspoly_d(\vec\tau) \right]
		=
		\P\left[ \xi_0 \geq \sum_{j=1}^d \left( \tau_j^+ \xi_j \ind_{\xi_j \geq 0} -  \tau_j^- \xi_j \ind_{\xi_j \leq 0} \right) \right]
		=
		\P[Z \leq 0]
	\end{align*}
	with
	\begin{equation}\label{eq:proof_angle_over_crosspoly_Z_random_var}
	Z
	:=
	-\xi_0 + \sum_{j=1}^d \left( \tau_j^+ \xi_j \ind_{\xi_j \geq 0} -  \tau_j^- \xi_j \ind_{\xi_j \leq 0} \right).
	\end{equation}
    The rest of the proof will rely on a Fourier inversion formula expressing $\P[Z \leq 0]$ in terms of the characteristic function of the random variable $Z$; see Lemma~\ref{lem:appendix_probability_for_negative_Z_via_char_function} in Appendix~\ref{sec:gil_pelaez_inversion}.
	Let us show that the characteristic function of $Z$ is given by
	\begin{equation}\label{eq:proof_angle_cone_over_crosspolytope_char_function}
	\E\left[ \eee^{\ii t Z} \right]
	=
	\eee^{-t^2/2} \prod_{j=1}^d \left( \eee^{-(\tau_j^+ t)^2/2} \Phi(\ii \tau_j^+ t) + \eee^{-(\tau_j^- t)^2/2} \Phi(\ii \tau_j^- t) \right), \qquad t\in \R.
	\end{equation}
	Indeed, by independence of \(\xi_0,\xi_1,\ldots,\xi_d\), we have
	\begin{align*}
		\E\left[\eee^{\ii t Z}\right]
		&=
		\E\left[\eee^{-\ii t \xi_0}\right] \prod_{j=1}^d \E\left[ \exp\left( \ii t \left( \tau_j^+ \xi_j \ind_{\{\xi_j\geq 0\}} - \tau_j^- \xi_j \ind_{\{\xi_j\leq 0\}} \right) \right) \right].
	\end{align*}
	The first factor is
	$
	\E[\eee^{-\ii t \xi_0}]
	=
	\eee^{-t^2/2}.
	$
	For the $j$-th factor,
	$$
	\E\left[ \exp\left( \ii t \left( \tau_j^+ \xi_j \ind_{\{\xi_j\geq 0\}} - \tau_j^- \xi_j \ind_{\{\xi_j\leq 0\}}	\right)	\right) \right]
	=
	\frac{1}{\sqrt{2\pi}} \int_0^\infty \eee^{\ii t\tau_j^+ x} \eee^{-x^2/2}\,\dd x
	+
	\frac{1}{\sqrt{2\pi}} \int_{-\infty}^0 \eee^{-\ii t\tau_j^- x}	\eee^{-x^2/2}\,\dd x.
	$$
	In the second integral we substitute $x=-y$, obtaining
	$$
	\frac{1}{\sqrt{2\pi}} \int_{-\infty}^0 \eee^{-\ii t\tau_j^- x} \eee^{-x^2/2}\,\dd x
	=
	\frac{1}{\sqrt{2\pi}} \int_0^\infty \eee^{\ii t\tau_j^- y} \eee^{-y^2/2}\,\dd y.
	$$
	Using
	$$
	\frac{1}{\sqrt{2\pi}} \int_0^\infty \eee^{\ii s x} \eee^{-x^2/2}\,\dd x
	=
	\eee^{-s^2/2}\Phi(\ii s),
	\qquad s\in\R,
	$$
	we therefore get
	$$
	\E\left[ \exp\left( \ii t \left(\tau_j^+ \xi_j \ind_{\{\xi_j\geq 0\}}- \tau_j^- \xi_j \ind_{\{\xi_j\leq 0\}} \right)	\right) \right]
	=
	\eee^{-(\tau_j^+ t)^2/2}\Phi(\ii \tau_j^+ t) +\eee^{-(\tau_j^- t)^2/2}\Phi(\ii \tau_j^- t).
	$$
	Multiplying these identities over \(j=1,\ldots,d\) yields the claimed characteristic function~\eqref{eq:proof_angle_cone_over_crosspolytope_char_function}.

In order to apply Lemma~\ref{lem:appendix_probability_for_negative_Z_via_char_function}, note that $\E |Z| < \infty$, which follows from~\eqref{eq:proof_angle_over_crosspoly_Z_random_var} because $\xi_0,\xi_1,\dots, \xi_d$ are integrable random variables. Moreover, the characteristic function $t\mapsto \E[\eee^{\ii t Z}]$ belongs to $L^1(\R)$. Indeed, by continuity of the characteristic function, it suffices to show integrability at $\pm \infty$.
There, the asymptotics of $\Phi$ imply
$$
\eee^{-x^2/2}\Phi(\ii x)=O\left( 1 \right), \qquad x\to \pm \infty, \,  x\in \R,
$$
and hence, for every $j=1,\ldots, d$,
$$
\eee^{-(\tau_j^\pm t)^2/2}\Phi(\ii \tau_j^\pm t)
=
O\left( 1 \right), \qquad t\to \pm \infty, \, t\in \R.
$$
Consequently, with \eqref{eq:proof_angle_cone_over_crosspolytope_char_function}, we obtain
\begin{equation}\label{eq:proof_angle_cone_over_crosspolytope_char_funct_estimate}
\left|\E[\eee^{\ii tZ}]\right|
=
O\left(\eee^{-t^2/2}\right), \qquad  t\to \pm \infty, \, t\in \R.
\end{equation}
This is integrable at infinity, proving that the characteristic function of $Z$ belongs to $L^1(\R)$.
Hence, Lemma~\ref{lem:appendix_probability_for_negative_Z_via_char_function} is applicable, and it yields
	\begin{align*}
		\P[Z\leq 0]
		=
		\frac12- \frac{1}{2\pi \ii} \int_0^\infty \frac{\E[\eee^{\ii tZ}]-\mathbb E[\eee^{-\ii tZ}]}{t} \dd t
		=
		\frac12	+ \frac{\ii}{2\pi} \int_0^\infty \frac{1}{t} \left( \E[\eee^{\ii tZ}] -	\E[\eee^{-\ii tZ}] \right) \dd t.
	\end{align*}
Inserting the characteristic function~\eqref{eq:proof_angle_cone_over_crosspolytope_char_function} gives
\begin{multline*}
	\alpha (\Ccrosspoly_d(\vec\tau))
	=
	\frac12 + \frac{\ii}{2\pi} \int_0^{\infty}  \frac{\eee^{-t^2/2}}{t}	\Bigg[	\prod_{j=1}^d \left( \eee^{-(\tau_j^+ t)^2/2}\Phi(\ii \tau_j^+ t) + \eee^{-(\tau_j^- t)^2/2}\Phi(\ii \tau_j^- t) \right)\\
	-\prod_{j=1}^d \left( \eee^{-(\tau_j^+ t)^2/2}\Phi(-\ii \tau_j^+ t)+ \eee^{-(\tau_j^- t)^2/2}\Phi(-\ii \tau_j^- t) \right) \Bigg] \dd t,
\end{multline*}
which is precisely the claimed formula. Since $\E|Z|<\infty$, dominated convergence gives
\[
\frac{\E[\eee^{\ii tZ}]-\E[\eee^{-\ii tZ}]}{t}
=
2\ii\,\E\left[\frac{\sin(tZ)}{t}\right]
\to
2\ii\,\E Z
\qquad\text{as } t\downarrow 0.
\]
Hence, the integrand admits a continuous extension to $t=0$. By~\eqref{eq:proof_angle_cone_over_crosspolytope_char_funct_estimate},
the integrand is
$O(\eee^{-t^2/2}/t)=o(\eee^{-t^2/2})$ as $t\to+\infty$. Thus, the integral is absolutely convergent.
\end{proof}

\begin{theorem}[Angle of $\Ccube_d(\vec\tau)$]\label{theo:angle_of_cone_over_box}
Let $d\geq1$ be an integer, and let
$\tau_1^+,\ldots,\tau_d^+,\tau_1^-,\ldots,\tau_d^->0$.  Then
$$
\alpha (\Ccube_d(\vec\tau))
=
\frac{1}{\sqrt{2 \pi}} \int_0^{\infty} \prod_{j=1}^d \left(\Phi(\tau_j^+ y) - \Phi(-\tau_j^- y)\right) \eee^{-y^2/2} \dd y.
$$
\end{theorem}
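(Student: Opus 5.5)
We compute $\alpha(\Ccube_d(\vec\tau))$ directly as a Gaussian probability, the same way as in the proof of Theorem~\ref{theo:angle_of_cone_over_crosspolytope}. Here the cone over the box has a product structure once $x_0$ is fixed, so no Fourier inversion is needed.

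Let $\xi_0,\xi_1,\ldots,\xi_d$ be independent standard normal random variables. Since the standard Gaussian vector in $\R^{d+1}$ is rotation invariant, the solid angle of a full-dimensional cone equals the probability that this vector lies in the cone. The cone $\Ccube_d(\vec\tau)$ is full-dimensional, because its section at $x_0=1$ is a $d$-dimensional box. The H-representation from the proof of Proposition~\ref{prop:polar_cones_over_box_and_crosspolytope} gives
\[
\alpha(\Ccube_d(\vec\tau))
=
\P\left[\xi_0\geq0,\ -\tau_j^-\xi_0\leq\xi_j\leq\tau_j^+\xi_0\ \text{for } j=1,\ldots,d\right].
\]

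Next we condition on $\xi_0=y$. For $y\geq0$ the events $\{-\tau_j^- y\leq\xi_j\leq\tau_j^+ y\}$, $j=1,\ldots,d$, are independent. Their probabilities are $\Phi(\tau_j^+y)-\Phi(-\tau_j^-y)$, where all arguments of $\Phi$ are real. By Fubini's theorem,
\[
\alpha(\Ccube_d(\vec\tau))
=
\frac1{\sqrt{2\pi}}\int_0^\infty\prod_{j=1}^d\bigl(\Phi(\tau_j^+y)-\Phi(-\tau_j^-y)\bigr)\eee^{-y^2/2}\,\dd y,
\]
which is the claimed formula. The boundary, where some inequality holds with equality, has probability zero, so strict versus non-strict inequalities do not matter.

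There is essentially no obstacle. The only point to check is that $\alpha$, defined through ball volumes, equals the Gaussian measure of the cone. This holds because both are normalized rotation-invariant measures of the cone's intersection with the unit sphere; one can pass to polar coordinates, and the homogeneity of the cone separates the radial part. As a sanity check, setting $\tau_j^\pm=\tau\sqrt\kappa$ recovers the integrand in Theorem~\ref{theo:vol_boxes}, which is consistent with that volume formula.
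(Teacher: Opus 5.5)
Your proposal is correct and is essentially the paper's own proof. You write the solid angle as the Gaussian probability of the H-representation $\{x_0\ge 0,\ -\tau_j^- x_0\le x_j\le \tau_j^+ x_0\}$, condition on $\xi_0=y$, and factor the resulting probability using the independence of $\xi_1,\dots,\xi_d$.
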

\begin{proof}
We can represent the cone as
$$
\Ccube_d(\vec\tau)
=
\left\{ x\in \R^{d+1}: x_0\geq 0,\, -\tau_1^- x_0 \leq x_1 \leq \tau_1^+ x_0,\,  \ldots,\,  - \tau_d^- x_0 \leq x_d \leq \tau_d^+ x_0  \right\}.
$$
With independent standard Gaussian random variables $\xi_0, \xi_1, \ldots, \xi_d$, we have
\begin{align*}
\alpha\left(\Ccube_d(\vec\tau) \right)
&=
\P\left[ (\xi_0, \xi_1, \ldots, \xi_d) \in \Ccube_d(\vec\tau) \right]
\\
&=
\P \left[ \xi_0\geq 0,\, -\tau_1^- \xi_0 \leq \xi_1 \leq \tau_1^+ \xi_0,\, \ldots,\, -\tau_d^- \xi_0 \leq \xi_d \leq \tau_d^+ \xi_0 \right]\\
&=
\frac{1}{\sqrt{2 \pi}} \int_0^{\infty} \P\left[ -\tau_1^- y \leq \xi_1 \leq \tau_1^+ y,\, \ldots,\,  -\tau_d^- y \leq \xi_d \leq \tau_d^+ y \right] \eee^{-y^2/2} \, \dd y\\
&=
\frac{1}{\sqrt{2 \pi}} \int_0^{\infty} \prod_{j=1}^d \left( \Phi(\tau_j^+ y) - \Phi(-\tau_j^- y) \right) \, \eee^{-y^2/2} \, \dd y,
\end{align*}
where we conditioned on $\xi_0 = y$ and used the independence of $\xi_0, \xi_1,\ldots, \xi_d$.
\end{proof}

\begin{remark}[The case $d=0$]
For later use, we adopt the convention
\[
\Ccube_0=\Ccrosspoly_0:=[0,\infty)\subseteq\R.
\]
We also understand products over an empty index set to be equal to $1$.
With these conventions, the integral formulas in
Theorems~\ref{theo:angle_of_cone_over_crosspolytope}
and~\ref{theo:angle_of_cone_over_box} remain valid for $d=0$ and give the correct values $\alpha(\Ccrosspoly_0)=\alpha(\Ccube_0)=1/2$.
The polarity relations in
Proposition~\ref{prop:polar_cones_over_box_and_crosspolytope}
also remain valid for $d=0$.
\end{remark}

\section{Angles and volumes of asymmetric crosspolytopes}
Fix an integer $d\geq 2$ and a curvature $\kappa\in \R$. Let $\tau_1^+,\ldots,\tau_d^+>0$ and $\tau_1^-,\ldots,\tau_d^->0$.  Consider the asymmetric crosspolytope
\begin{equation}\label{eq:crosspolytope_def}
P := \conv \left\{\frac{e_1}{\tau_1^+},\ldots,  \frac{e_d}{\tau_d^+}, -\frac{e_1}{\tau_1^-},\ldots, -\frac{e_d}{\tau_d^-} \right\}\subseteq \R^d.
\end{equation}
We assume that \(P\) is contained in the closure of the Klein model $\bar{\mathbb B}^d(\kappa)$. This is always the case for $\kappa \geq 0$. For $\kappa< 0$, this is equivalent to requiring that every vertex of \(P\) lies in \(\bar{\mathbb B}^d(\kappa) \) and hence the  containment condition is
\[
\min\left\{\tau_1^+,\ldots, \tau_d^+, \tau_1^-,\ldots, \tau_d^-\right\}
\geq
\sqrt {-\kappa} \qquad   \qquad (\kappa <0).
\]

\subsection{Internal and external angles of asymmetric crosspolytopes}
The next result characterizes the tangent and normal cones of $P$ at its faces and provides closed-
form expressions for its internal and external angles. For $k\in\{1,\ldots,d\}$, consider the $(k-1)$-dimensional face
\begin{equation}\label{eq:crosspolytope_face_def}
	F
	:=
	\conv\left( \frac{e_1}{\tau_1^+}, \ldots, \frac{e_k}{\tau_k^+} \right).
\end{equation}
After applying a suitable signed permutation to the coordinates and the parameters, there is no loss of generality in
considering a face of this form.

\begin{theorem}[Internal and external angles of crosspolytopes] \label{theo:hyperbolic_angles_crosspolytopes}
Fix an integer $d\geq 2$ and a curvature $\kappa\in \R$. Let $\tau_1^+,\ldots,\tau_d^+>0$ and $\tau_1^-,\ldots,\tau_d^->0$. Let  $P$ be an asymmetric crosspolytope defined by~\eqref{eq:crosspolytope_def} and assume that $P \subseteq \bar{\mathbb B}^d(\kappa)$. For $k\in\{1,\ldots,d\}$, let $F$ be the face defined
by~\eqref{eq:crosspolytope_face_def}, and let
$x\in\relint(F)\cap\mathbb B^d(\kappa)$.  Put $s_k(\kappa) = (\kappa + (\tau_1^+)^2 + \ldots + (\tau_k^+)^2)^{1/2}$. (Under the preceding assumptions, $s_k(\kappa)>0$.) Then the following assertions hold.
	\begin{itemize}
		\item[(a)] The normal cone $N_x(F,P)$ of $P$ at $F$ is isometric to the cone
		\begin{equation}\label{eq:cross_const_curv_normal_cone}
\Ccube_{d-k}\left( \left( \frac{\tau_{k+1}^+}{s_k(\kappa)}, \frac{\tau_{k+1}^-}{s_k(\kappa)} \right), \ldots, \left(\frac{\tau_{d}^+}{s_k(\kappa)}, \frac{\tau_{d}^-}{s_k(\kappa)} \right) \right).
		\end{equation}
\item[(b)] The external angle at $F$ is given by
\[
\alpha_x(N_x(F,P))
=
\frac{1}{\sqrt{2\pi}}
\int_0^\infty
\prod_{j=k+1}^d
\left(
\Phi\left(
\frac{\tau_j^+ y}{s_k(\kappa)}
\right)
-
\Phi\left(
-\frac{\tau_j^- y}{s_k(\kappa)}
\right)
\right)
\eee^{-y^2/2}\dd y.
\]
	    \item[(c)] The tangent cone $T_x(F,P)$ of $P$ at $F$ is isometric
to the direct orthogonal sum of $\R^{k-1}$ and the cone
\begin{equation}\label{eq:cross_const_curv_tangent_cone}
\Ccrosspoly_{d-k}\left(
\left(
\frac{\tau_{k+1}^-}{s_k(\kappa)},
\frac{\tau_{k+1}^+}{s_k(\kappa)}
\right),
\ldots,
\left(
\frac{\tau_d^-}{s_k(\kappa)},
\frac{\tau_d^+}{s_k(\kappa)}
\right)
\right).
\end{equation}
\item[(d)] The internal angle at $F$ is given by
\begin{multline*}
\alpha_x(T_x(F,P))
=
\frac12
+
\frac{\ii}{2\pi}
\int_0^\infty
\frac{\eee^{-t^2/2}}{t}
\Bigg[
\prod_{j=k+1}^d
\left(
\eee^{-\frac{(\tau_j^-t)^2}{2s_k(\kappa)^2}}
\Phi\left(\frac{\ii\tau_j^-t}{s_k(\kappa)}\right)
+
\eee^{-\frac{(\tau_j^+t)^2}{2s_k(\kappa)^2}}
\Phi\left(\frac{\ii\tau_j^+t}{s_k(\kappa)}\right)
\right)
\\
-
\prod_{j=k+1}^d
\left(
\eee^{-\frac{(\tau_j^-t)^2}{2s_k(\kappa)^2}}
\Phi\left(-\frac{\ii\tau_j^-t}{s_k(\kappa)}\right)
+
\eee^{-\frac{(\tau_j^+t)^2}{2s_k(\kappa)^2}}
\Phi\left(-\frac{\ii\tau_j^+t}{s_k(\kappa)}\right)
\right)
\Bigg]\dd t.
\end{multline*}
\end{itemize}
\end{theorem}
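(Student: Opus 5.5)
We follow the same scheme as for boxes and simplices. First write down the irredundant H-representation of $P$. Its facets correspond to sign vectors $\eps\in\{\pm1\}^d$ and are given by
\[
\Big\langle \sum_{i=1}^d \eps_i\tau_i^{[\eps_i]}e_i,\,y\Big\rangle\le 1 .
\]
A facet contains $F=\conv(e_1/\tau_1^+,\ldots,e_k/\tau_k^+)$ exactly when $\eps_1=\cdots=\eps_k=+1$. Hence $I(F)$ is indexed by $(\eps_{k+1},\ldots,\eps_d)\in\{\pm1\}^{d-k}$. The corresponding normals are
\[
a_\eps=\tau_1^+e_1+\cdots+\tau_k^+e_k+\sum_{j>k}\eps_j\tau_j^{[\eps_j]}e_j,
\qquad b_\eps=1 .
\]
By Proposition~\ref{prop:normal_cone_in_constant_curvature}(b),(c), $N_x(F,P)=\pos\{\tilde n_\eps(x)\}$, and the Gram matrix of these generators is
\[
\langle a_\eps,a_\eta\rangle+\kappa=s_k(\kappa)^2+\sum_{j>k}\eps_j\eta_j\tau_j^{[\eps_j]}\tau_j^{[\eta_j]} .
\]
First check that $s_k(\kappa)^2>0$. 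The containment condition gives $(\tau_1^+)^2+\kappa\ge0$. Equality with $k=1$ would make $F$ an ideal vertex, which is excluded because $x\in\mathbb B^d(\kappa)$.

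To identify the normal cone, realize these generators in $\R^{d-k+1}$ by
\[
w_\eps=s_k(\kappa)e_0+\sum_{j>k}\eps_j\tau_j^{[\eps_j]}e_j .
\]
This system has exactly the Gram matrix above. A cone is determined up to isometry by the Gram matrix of a generating system, because the assignment of generators extends to a linear isometry of their linear hulls, exactly as in the proof of part~(d) of Proposition~\ref{prop:normal_cone_in_constant_curvature}. Dividing each $w_\eps$ by $s_k(\kappa)>0$ turns them into the generators of $\Ccube_{d-k}$ with parameters $\tau_j^\pm/s_k(\kappa)$. This proves~(a), and the linear hull has dimension $d-k+1=d-\dim F$, consistent with Proposition~\ref{prop:normal_cone_in_constant_curvature}(a).

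Part~(b) then follows by applying Theorem~\ref{theo:angle_of_cone_over_box} with these rescaled parameters. For $k=d$ we use the convention $\Ccube_0=[0,\infty)$, which gives the value $1/2$.

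For~(c), transport everything by a linear isometry $I:(T_x\mathbb B^d(\kappa),g_x)\to\R^d$, as in the proof of Theorem~\ref{theo:hyperbolic_angles_boxes}. Then $IT_x(F,P)$ is the Euclidean polar of a copy of the $(d-k+1)$-dimensional cone from~(a). This polar is the orthogonal sum of $\R^{k-1}$ and the polar of $\Ccube_{d-k}(\vec\tau/s_k)$ taken inside its own span. By Proposition~\ref{prop:polar_cones_over_box_and_crosspolytope}(a), that polar is $-\Ccrosspoly_{d-k}((\vec\tau/s_k)^*)$, which is isometric to $\Ccrosspoly_{d-k}$ with $\tau_j^+$ and $\tau_j^-$ swapped, as stated. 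Part~(d) is then Theorem~\ref{theo:angle_of_cone_over_crosspolytope} with parameters $\tau_j^\mp/s_k(\kappa)$, written out directly.

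The only genuinely delicate point is bookkeeping: making sure the realization $w_\eps$ really reproduces the cross terms $\eps_j\eta_j\tau_j^{[\eps_j]}\tau_j^{[\eta_j]}$, and tracking the $+/-$ swap coming from the polarity. Both are routine checks.
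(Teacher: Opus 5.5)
Your proposal is correct and follows essentially the same route as the paper: facet normals $a_\eps$ with $b_\eps=1$ and the Gram matrix $\langle a_\eps,a_\delta\rangle+\kappa$ from Proposition~\ref{prop:normal_cone_in_constant_curvature}, rescaling by $s_k(\kappa)$ to match the generators of $\Ccube_{d-k}$, then polarity via Proposition~\ref{prop:polar_cones_over_box_and_crosspolytope} and the two solid-angle theorems. Your explicit check that $s_k(\kappa)>0$ (which can fail only for $k=1$ at an ideal vertex) and your handling of the case $k=d$ via the convention $\Ccube_0=[0,\infty)$ are small details that the paper leaves implicit.
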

\begin{proof}
\emph{Proof of (a).} We are going to apply Proposition~\ref{prop:normal_cone_in_constant_curvature}.
The irredundant H-representation of $P$ is
$$
P
=
\bigg\{ x\in \R^d: \sum_{i=1}^d \eps_i \tau_i^{[\eps_i]} x_i \leq 1 \text{ for all } \eps = (\eps_1,\ldots, \eps_d) \in \{\pm 1\}^d \bigg\}.
$$
The facets of $P$ are naturally indexed by $\eps = (\eps_1, \ldots, \eps_d) \in \{\pm 1\}^d$ and have the form
\begin{equation}\label{eq:pf_angles_of_crosspolytopes_faces_of_P}
P \cap \bigg\{ x\in \R^d: \sum_{i=1}^d \eps_i \tau_i^{[\eps_i]} x_i = 1 \bigg\}.
\end{equation}
The Euclidean outward normal vector to this facet can be chosen to be $a_\eps = \sum_{i=1}^d \eps_i \tau_i^{[\eps_i]} e_i$, with corresponding $b_\eps = 1$.
The facets of $P$ containing $F$ are precisely those sets~\eqref{eq:pf_angles_of_crosspolytopes_faces_of_P} with
$$
\eps = (\underbrace{1,\ldots, 1}_{k \text{ times }},\eps_{k+1}, \ldots, \eps_d) \qquad \text{ for any } \eps_{k+1}, \ldots, \eps_d \in \{\pm 1\}.
$$
Now let
$x\in\relint(F)\cap\mathbb B^d(\kappa)$. By Proposition~\ref{prop:normal_cone_in_constant_curvature},
parts~(b) and~(c), the normal cone $N_x(F,P)$ is given by
$$
N_x(F,P) = \pos\big(\tilde n_\eps(x): \eps = (1,\ldots, 1,\eps_{k+1}, \ldots, \eps_d) \in \{1\}^k \times \{\pm 1\}^{d-k} \big)
$$
with vectors $\tilde n_\eps(x)$ satisfying
$$
g_x(\tilde n_\eps(x), \tilde n_\delta(x))
=
\langle a_\eps,a_\delta\rangle+\kappa b_\eps b_\delta = \langle a_\eps,a_\delta\rangle+\kappa,
$$
for  $\eps, \delta \in \{1\}^k \times \{\pm 1\}^{d-k}$. Now
\begin{align*}
\lan a_{\eps}, a_\delta \ran
&=
\left\lan \sum_{\ell=1}^k \tau_\ell^+ e_\ell + \sum_{\ell = k+1}^d \eps_\ell \tau_\ell^{[\eps_\ell]} e_\ell, \sum_{\ell=1}^k \tau_\ell^+ e_\ell + \sum_{\ell = k+1}^d \delta_\ell \tau_\ell^{[\delta_\ell]} e_\ell \right\ran\\
&=
(\tau^+_1)^2 + \ldots + (\tau_k^+)^2 + \sum_{\ell = k+1}^d \eps_\ell \delta_\ell \tau_\ell^{[\eps_\ell]} \tau_\ell^{[\delta_\ell]}.
\end{align*}
The Riemannian scalar products of the rescaled vectors
$$
n_\eps'(x) := \frac{\tilde n_\eps(x)}{\sqrt{\kappa + ({\tau_1^+})^2 + \ldots + ({\tau_k^+})^2}} = \frac{\tilde n_\eps(x)}{s_k(\kappa)}
$$
are then
\begin{equation}\label{eq:proof_normal_cones_crosspoly_assymmetric_products}
g_x(n_\eps'(x), n_\delta'(x))
=
\frac{\lan a_\eps, a_\delta \ran + \kappa}{\kappa + (\tau_1^+)^2 + \ldots + (\tau_k^+)^2}
=
1 + \sum_{\ell = k+1}^d \eps_\ell \delta_\ell \frac{\tau_\ell^{[\eps_\ell]}}{s_k(\kappa)} \frac{\tau_\ell^{[\delta_\ell]}}{s_k(\kappa)}.
\end{equation}
On the other hand, consider the cone~\eqref{eq:cross_const_curv_normal_cone}, that is
\begin{align*}
\Ccube_{d-k}(\vec\mu)
=
\pos \left\{
e_0+\sum_{i=k+1}^d \eps_i \mu_i^{[\eps_i]} e_i
:
\eps_{k+1},\ldots, \eps_d\in \{-1,+1\}
\right\}
\subseteq \lin (e_0, e_{k+1},\ldots, e_d)\equiv \R^{d-k+1},
\end{align*}
with the parameters
$
\vec\mu = ((\mu_{k+1}^+,\mu_{k+1}^-), \ldots, (\mu_d^+, \mu_d^-))
$
given in \eqref{eq:cross_const_curv_normal_cone}.
A computation of the Euclidean scalar products of these generators shows that they agree with the Riemannian scalar products in~\eqref{eq:proof_normal_cones_crosspoly_assymmetric_products}.
Thus, $N_x(F,P)$ is indeed isometric  to the cone~\eqref{eq:cross_const_curv_normal_cone}.

\vspace*{0.2cm}
\noindent
\emph{Proof of (c).}
Let
$
I:(T_x\mathbb B^d(\kappa),g_x)
\to
(\R^d,\langle\cdot,\cdot\rangle)
$
be a linear isometry. Then $IT_x(F,P)$ is the Euclidean polar cone of
$IN_x(F,P)$. By part~(a), the cone $IN_x(F,P)$ is isometric to the
cone in~\eqref{eq:cross_const_curv_normal_cone}. Moreover,
$
\dim\lin N_x(F,P)=d-k+1
$
by Proposition~\ref{prop:normal_cone_in_constant_curvature},
part~(a). Hence,
Proposition~\ref{prop:polar_cones_over_box_and_crosspolytope} shows
that $T_x(F,P)$ is isometric to the direct orthogonal sum of
$\R^{k-1}$ and the cone
in~\eqref{eq:cross_const_curv_tangent_cone}. Here we used the fact
that a cone and its negative are isometric.

\vspace*{0.2cm}
\noindent
\emph{Proof of (b) and (d).}
Part~(b) follows from~(a) and
Theorem~\ref{theo:angle_of_cone_over_box}. Since taking a direct
orthogonal sum with a Euclidean space does not change the solid angle,
part~(d) follows from~(c) and
Theorem~\ref{theo:angle_of_cone_over_crosspolytope}.
\end{proof}

\subsection{Volumes of asymmetric crosspolytopes in even dimension}
By decomposing asymmetric crosspolytopes into rectangular simplices
and applying
Theorem~\ref{theo:volume_rectangular_simplex}, we obtain a closed-form
expression for their Riemannian volumes in even dimensions.

\begin{theorem}[Volume in even dimension]
\label{theo:volume_asymmetric_crosspolytope_even_d}
Let $d\geq2$ be even, fix a curvature $\kappa\in\R\setminus\{0\}$, and let
$\tau_1^+,\ldots,\tau_d^+,\tau_1^-,\ldots,\tau_d^->0$.
Let $P$ be the asymmetric crosspolytope defined
by~\eqref{eq:crosspolytope_def}, and assume that
$P\subseteq\bar{\mathbb B}^d(\kappa)$. Then the Riemannian volume of
$P$ is given by
\begin{multline*}
\Vol_{d,\kappa}(P)
=
\frac{\omega_{d+1}}{\kappa^{d/2}} \bigg[ \frac12+ \frac{\ii}{2\pi} \int_0^\infty \frac{\eee^{-\kappa y^2/2}}{y} \bigg( \prod_{j=1}^d \Big( \eee^{-(\tau_j^+)^2y^2/2}\Phi(\ii \tau_j^+y) + \eee^{-(\tau_j^-)^2y^2/2}\Phi(\ii\tau_j^-y) \Big)\\
-
\prod_{j=1}^d\Big(\eee^{-(\tau_j^+)^2y^2/2}\Phi(-\ii \tau_j^+y) +\eee^{-(\tau_j^-)^2 y^2/2}\Phi(-\ii\tau_j^-y) \Big)\bigg) \, \dd y \bigg].
\end{multline*}
\end{theorem}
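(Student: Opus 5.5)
The plan is to decompose $P$ into the $2^d$ orthants. For each sign vector $\eps=(\eps_1,\ldots,\eps_d)\in\{\pm1\}^d$, put
\[
P_\eps:=P\cap\{x:\eps_ix_i\geq0,\ i=1,\ldots,d\}=\conv\bigl(0,\eps_1e_1/\tau_1^{[\eps_1]},\ldots,\eps_de_d/\tau_d^{[\eps_d]}\bigr).
\]
The coordinate reflections $x_i\mapsto-x_i$ are isometries of the Klein model, so $P_\eps$ is isometric to the rectangular simplex $\conv(0,e_1/\tau_1^{[\eps_1]},\ldots,e_d/\tau_d^{[\eps_d]})$. The pieces overlap only in sets of Lebesgue measure zero, so $\Vol_{d,\kappa}(P)=\sum_\eps\Vol_{d,\kappa}(P_\eps)$. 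The containment condition $P\subseteq\bar{\mathbb B}^d(\kappa)$, i.e.\ $\min\tau_i^\pm\geq\sqrt{-\kappa}$ when $\kappa<0$, is exactly condition~\eqref{eq:rect_simplex_kappa_range} for each piece. Hence Theorem~\ref{theo:volume_rectangular_simplex} applies to every $P_\eps$, including the case of ideal vertices.

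Summing the formula of Theorem~\ref{theo:volume_rectangular_simplex} over $\eps$, the constant terms give $2^d\cdot 2^{-(d+1)}=\frac12$. For the integral terms, write
\[
\eee^{-(\kappa+\sum_j(\tau_j^{[\eps_j]})^2)y^2/2}=\eee^{-\kappa y^2/2}\prod_{j=1}^d\eee^{-(\tau_j^{[\eps_j]})^2y^2/2}.
\]
The sum over $\eps$ of $\prod_j \eee^{-(\tau_j^{[\eps_j]}y)^2/2}\Phi(\pm\ii\tau_j^{[\eps_j]}y)$ then factorizes, by the distributive law, as
\[
\prod_{j=1}^d\Bigl(\eee^{-(\tau_j^+y)^2/2}\Phi(\pm\ii\tau_j^+y)+\eee^{-(\tau_j^-y)^2/2}\Phi(\pm\ii\tau_j^-y)\Bigr).
\]
Interchanging the finite sum with the integral and factoring out $\omega_{d+1}/\kappa^{d/2}$ gives exactly the claimed expression.

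The only step needing care is whether the integral in Theorem~\ref{theo:volume_rectangular_simplex} converges absolutely for each piece, so that summing integrals equals integrating the sum. For each piece, near $y=0$ the integrand (the difference of the two conjugate products divided by $y$) is bounded, since the difference is $O(y)$. For large $y$, the bound $|\eee^{-x^2/2}\Phi(\ii x)|=O(1/(1+|x|))$ makes the integrand $O(\eee^{-\kappa y^2/2}y^{-1-d})$. This requires some care when $\kappa<0$. However, each individual formula was already established as a convergent integral in Theorem~\ref{theo:volume_rectangular_simplex}, and summing finitely many convergent (possibly improper) integrals is legitimate regardless. So linearity over the finite sum suffices, and no further estimate is actually needed.
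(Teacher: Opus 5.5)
Your proposal is correct and follows the paper's proof essentially step for step. You decompose $P$ into the $2^d$ orthant simplices $P_\eps$, use coordinate reflections to reduce each piece to Theorem~\ref{theo:volume_rectangular_simplex}, and sum using the distributive-law factorization. Your closing remark on convergence is a harmless extra that the paper does not include; you rightly rely on finite linearity of the already-convergent integrals rather than on the crude $O(\eee^{-\kappa y^2/2}y^{-1-d})$ bound, which would not suffice for $\kappa<0$.
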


\begin{proof}
The crosspolytope $P$ is the union of the rectangular simplices
\[
P_\eps
:=
\conv\left\{
0,
\frac{\eps_1e_1}{\tau_1^{[\eps_1]}},
\ldots,
\frac{\eps_de_d}{\tau_d^{[\eps_d]}}
\right\},
\qquad
\eps\in\{\pm1\}^d,
\]
and any two distinct such simplices intersect in a set of Riemannian
$d$-volume zero. Since reflections in coordinate hyperplanes are isometries of the Klein model, $P_\eps$ is isometric to
$$
\conv\left\{ 0,\frac{e_1}{\tau_1^{[\eps_1]}}, \ldots, \frac{e_d}{\tau_d^{[\eps_d]}}\right\}.
$$
Thus, by the formula for the volume of rectangular simplices, Theorem~\ref{theo:volume_rectangular_simplex},
\begin{multline*}
\Vol_{d,\kappa}(P)
=
\sum_{\eps\in\{\pm1\}^d}\Vol_{d,\kappa}(P_\eps)
=
\frac{\omega_{d+1}}{\kappa^{d/2}} \bigg[ \sum_{\eps\in\{\pm1\}^d}\frac1{2^{d+1}}
\\
+
\frac{\ii}{2\pi} \int_0^\infty\frac{\eee^{-\kappa y^2/2}}{y} \sum_{\eps\in\{\pm1\}^d} \eee^{-\sum_{j=1}^d(\tau_j^{[\eps_j]})^2 y^2/2} \bigg( \prod_{j=1}^d\Phi(\ii\tau_j^{[\eps_j]}y) -\prod_{j=1}^d\Phi(-\ii\tau_j^{[\eps_j]}y) \bigg) \,\dd y \bigg].
\end{multline*}
Observe that
$
\sum_{\eps\in\{\pm1\}^d}\frac1{2^{d+1}}
=
\frac12.
$
Using
\begin{align*}
\sum_{\eps\in\{\pm1\}^d}
\eee^{-\sum_{j=1}^d(\tau_j^{[\eps_j]})^2y^2/2}
\prod_{j=1}^d
\Phi(\ii\tau_j^{[\eps_j]}y)
=
\prod_{j=1}^d
\left(
\eee^{-(\tau_j^+)^2y^2/2}\Phi(\ii\tau_j^+y)
+
\eee^{-(\tau_j^-)^2y^2/2}\Phi(\ii\tau_j^-y)
\right),
\end{align*}
and the analogous identity with $\ii$ replaced by $-\ii$, the preceding
formula simplifies to
\begin{align*}
\Vol_{d,\kappa}(P)
&=
\frac{\omega_{d+1}}{\kappa^{d/2}} \bigg[ \frac12 +\frac{\ii}{2\pi} \int_0^\infty \frac{\eee^{-\kappa y^2/2}}{y} \bigg( \prod_{j=1}^d \left( \eee^{-(\tau_j^+)^2y^2/2}\Phi(\ii\tau_j^+y) +\eee^{-(\tau_j^-)^2y^2/2}\Phi(\ii\tau_j^-y) \right) \\
&\hspace{4.8cm} - \prod_{j=1}^d \left( \eee^{-(\tau_j^+)^2y^2/2}\Phi(-\ii\tau_j^+y) + \eee^{-(\tau_j^-)^2y^2/2}\Phi(-\ii\tau_j^-y) \right) \bigg) \,\dd y \bigg],
\end{align*}
which is precisely the stated formula.  The two products in the integrand are complex conjugates. Hence, their
difference is purely imaginary, so the right-hand side is real.
\end{proof}

\subsection{Example: Angles and volumes of regular crosspolytopes}
In this subsection, we specialize the preceding results to regular
crosspolytopes and express them in terms of the geodesic edge length of
the crosspolytope.

\begin{corollary}[Angles and volumes of regular crosspolytopes]
\label{cor:angles_volumes_regular_crosspolytopes_constant_curvature}
Let $d\geq2$ and $\kappa\in\R$. Let
$\Diamond_\ell^d(\kappa)\subseteq\mathbb B^d(\kappa)$ be a
$d$-dimensional regular crosspolytope of geodesic edge length $\ell$.
Such a crosspolytope exists for every $\ell\in(0,\infty)$ if
$\kappa\leq0$, whereas for $\kappa>0$ it exists precisely when $0<\ell<\frac{\pi}{2\sqrt{\kappa}}$.
Put
\[
C:=C_\kappa(\ell)
:=
\begin{cases}
\cosh(\sqrt{-\kappa}\,\ell),&\kappa<0,\\
1,&\kappa=0,\\
\cos(\sqrt{\kappa}\,\ell),&\kappa>0.
\end{cases}
\]
Then, for every $k\in\{0,\ldots,d-1\}$, every $k$-dimensional face
$F$ of $\Diamond_\ell^d(\kappa)$, and every $x\in\relint(F)$, the
external and internal angles at $F$ are given by
\begin{align}
\alpha_x(N_x(F,\Diamond_\ell^d(\kappa)))
&=
\frac{1}{\sqrt{2\pi}} \int_0^\infty \left( 2\Phi \left( \sqrt{\frac{C}{1+ kC}} \, y \right) -1\right)^{d-k-1} \eee^{-y^2/2} \, \dd y,
\label{eq:regular_crosspolytope_external_angle}
\\
\alpha_x(T_x(F,\Diamond_\ell^d(\kappa)))
&=
\frac12
-
\frac{2^{d-k-1}}{\pi} \int_0^\infty
\Im \Bigg[\Phi\Bigg( \ii \sqrt{\frac{C}{1+(d-1)C}}\, y \Bigg)^{d-k-1}\Bigg] \eee^{-y^2 / 2} \, \frac{\dd y}{y}.
\label{eq:regular_crosspolytope_internal_angle}
\end{align}
If $d$ is even and $\kappa\neq0$, then the Riemannian volume of
$\Diamond_\ell^d(\kappa)$ is given by
\begin{align}
\Vol_{d,\kappa}\bigl(\Diamond_\ell^d(\kappa)\bigr)
=
\frac{\omega_{d+1}}{\kappa^{d/2}}
\Bigg[
\frac12
-
\frac{2^d}{\pi}
\int_0^\infty
\Im \Bigg[
\Phi\left(
\ii\sqrt{\frac{C}{1+(d-1)C}}\,y
\right)^d
\Bigg]\eee^{-y^2/2} \, \frac{\dd y}{y}
\Bigg]. \label{eq:volume_regular_crosspolytope_curvature_kappa_even_d}
\end{align}
For $\kappa<0$, let
\[
\Diamond_\infty^d(\kappa)
:=
\conv\left\{
\frac{e_1}{\sqrt{-\kappa}},\ldots,
\frac{e_d}{\sqrt{-\kappa}},
-\frac{e_1}{\sqrt{-\kappa}},\ldots,
-\frac{e_d}{\sqrt{-\kappa}}
\right\}\subseteq \bar{\mathbb B}^d(\kappa)
\]
denote the regular ideal crosspolytope. Its angle and volume formulas
correspond to the limiting values $\ell\to\infty$ and
$C\to\infty$. More precisely, for every
$k\in\{1,\ldots,d-1\}$, every $k$-dimensional face $F$ of
$\Diamond_\infty^d(\kappa)$, and every
$x\in\relint(F)\cap\mathbb B^d(\kappa)$, one has
\begin{align}
\alpha_x\bigl(N_x(F,\Diamond_\infty^d(\kappa))\bigr)
&=
\frac{1}{\sqrt{2\pi}}
\int_0^\infty
\left(
2\Phi\left(\frac{y}{\sqrt{k}}\right)
-
1
\right)^{d-k-1}
\eee^{-y^2/2}\dd y,
\label{eq:regular_crosspolytope_ideal_external_angle}
\\
\alpha_x\bigl(T_x(F,\Diamond_\infty^d(\kappa))\bigr)
&=
\frac12
-
\frac{2^{d-k-1}}{\pi}
\int_0^\infty
\Im \Bigg[
\Phi\left(
\frac{\ii y}{\sqrt{d-1}}
\right)^{d-k-1}
\Bigg]
\eee^{-y^2/2} \, \frac{\dd y}{y}.  \label{eq:regular_crosspolytope_ideal_internal_angle}
\end{align}
If $\kappa<0$ and $d$ is even, then the Riemannian volume of the
regular ideal crosspolytope is given by
\begin{align}
\Vol_{d,\kappa}\bigl(\Diamond_\infty^d(\kappa)\bigr)
&=
\frac{\omega_{d+1}}{\kappa^{d/2}}
\Bigg[
\frac12
-
\frac{2^d}{\pi}
\int_0^\infty
\Im
\left[
\Phi\left(
\frac{\ii y}{\sqrt{d-1}}
\right)^d
\right]
\eee^{-y^2/2} \,  \frac{\dd y}{y}
\Bigg].
\label{eq:volume_regular_crosspolytope_ideal_curvature_kappa_even_d}
\end{align}
\end{corollary}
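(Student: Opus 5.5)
We realize $\Diamond_\ell^d(\kappa)$ as the centered symmetric crosspolytope with all parameters $\tau_j^\pm=\tau>0$. We then specialize Theorems~\ref{theo:hyperbolic_angles_crosspolytopes} and~\ref{theo:volume_asymmetric_crosspolytope_even_d}. The containment condition is $\tau\geq\sqrt{-\kappa}$ for $\kappa<0$, with equality for the ideal case.

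\emph{Step 1: express $\tau$ through $C$.} Take adjacent vertices $u=e_1/\tau$ and $v=e_2/\tau$. They have $\|u\|^2=\|v\|^2=\tau^{-2}$ and $\langle u,v\rangle=0$. The distance formula~\eqref{eq:geodesic_distance_in_B_d_kappa} gives
\[
C=\frac{1}{1+\kappa/\tau^2}=\frac{\tau^2}{\tau^2+\kappa}.
\]
Equivalently, $\kappa/\tau^2=(1-C)/C$. For $\kappa>0$ the map $\tau\mapsto C$ ranges over $(0,1)$, which yields $\ell<\pi/(2\sqrt\kappa)$. For $\kappa<0$ it ranges over $(1,\infty)$, with $C\to\infty$ as $\tau\downarrow\sqrt{-\kappa}$. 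A $k$-dimensional face corresponds to $k+1$ vertices, so we apply Theorem~\ref{theo:hyperbolic_angles_crosspolytopes} with $k+1$ in place of $k$. Then
\[
s_{k+1}(\kappa)^2=\kappa+(k+1)\tau^2,
\qquad
\frac{\tau^2}{s_{k+1}(\kappa)^2}=\frac{1}{(k+1)+(1-C)/C}=\frac{C}{1+kC}.
\]
In the ideal limit this ratio becomes $1/k$.

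\emph{Step 2: external angle.} In Theorem~\ref{theo:hyperbolic_angles_crosspolytopes}(b) every factor becomes $\Phi(ay)-\Phi(-ay)=2\Phi(ay)-1$ with $a=\sqrt{C/(1+kC)}$. There are $d-k-1$ such factors, which gives~\eqref{eq:regular_crosspolytope_external_angle} immediately.

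\emph{Step 3: internal angle.} In part~(d) each factor equals $2\eee^{-a^2t^2/2}\Phi(\pm\ii at)$. Pulling out the exponentials gives the prefactor
\[
\eee^{-(1+(d-k-1)a^2)t^2/2}.
\]
Now substitute $y=t\sqrt{1+(d-k-1)a^2}$. A short computation shows
\[
1+(d-k-1)a^2=\frac{1+(d-1)C}{1+kC},
\qquad
\frac{a}{\sqrt{1+(d-k-1)a^2}}=\sqrt{\frac{C}{1+(d-1)C}}=:b.
\]
The measure $\dd t/t=\dd y/y$ is scale invariant. The two products are complex conjugates, so their difference is $2\ii\,\Im[\Phi(\ii by)^{d-k-1}]$. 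The total prefactor is then $\frac{\ii}{2\pi}\cdot 2^{d-k-1}\cdot 2\ii=-2^{d-k-1}/\pi$, which gives~\eqref{eq:regular_crosspolytope_internal_angle}.

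\emph{Step 4: volume.} The volume is handled identically from Theorem~\ref{theo:volume_asymmetric_crosspolytope_even_d}. There the exponent is $(\kappa+d\tau^2)y^2/2$, and we substitute $z=y\sqrt{\kappa+d\tau^2}$. The argument becomes
\[
\frac{\tau}{\sqrt{\kappa+d\tau^2}}=\sqrt{\frac{C}{1+(d-1)C}},
\]
and the factor $2^d$ arises in the same way.

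\emph{Step 5: ideal case.} For the ideal crosspolytope we take $\tau=\sqrt{-\kappa}$ directly in the same computations. For $k\geq1$ this gives $\tau^2/s_{k+1}^2=1/k$ and $b=1/\sqrt{d-1}$. The Euclidean case $\kappa=0$, $C=1$, is covered by the same substitutions with $\kappa=0$.

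\emph{Main obstacle.} The main point needing care is the passage to $\Im$ with the correct sign. There is also a branch subtlety for $\kappa>0$ in the range $C\in(0,1)$, but there $b$ is real, so no branch issue arises. Finally, the integrand must be justified near $y=0$; this follows from the continuity statement in Theorem~\ref{theo:angle_of_cone_over_crosspolytope}.
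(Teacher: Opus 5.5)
Your proposal is correct and follows essentially the same route as the paper's proof. Both specialize Theorems~\ref{theo:hyperbolic_angles_crosspolytopes} (with $k+1$ in place of $k$) and~\ref{theo:volume_asymmetric_crosspolytope_even_d} to $\tau_j^\pm=\tau$, use $C=\tau^2/(\tau^2+\kappa)$ to rewrite $\tau^2/s_{k+1}(\kappa)^2=C/(1+kC)$ and $\tau^2/(\kappa+d\tau^2)=C/(1+(d-1)C)$, rescale the integration variable, pass to $\Im$ via complex conjugation, and handle the ideal case by setting $\tau=\sqrt{-\kappa}$ directly.
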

\begin{proof}
Take $\tau_1^+=\cdots=\tau_d^+
=
\tau_1^-=\cdots=\tau_d^-=\tau>0$ in the setting of asymmetric crosspolytopes, and put
\[
P
:=
\conv\left\{
\frac{e_1}{\tau},\ldots,\frac{e_d}{\tau},
-\frac{e_1}{\tau},\ldots,-\frac{e_d}{\tau}
\right\}.
\]
For $\kappa\geq0$, there is no restriction on $\tau>0$. If
$\kappa<0$, then
$P\subseteq\mathbb B^d(\kappa)$
if and only if  $\tau>\sqrt{-\kappa}$,
while $\tau=\sqrt{-\kappa}$ corresponds to the regular ideal
crosspolytope.

Let $\ell$ be the geodesic edge length of $P$. Consider two adjacent vertices
$\eps_i e_i/\tau$ and $\eps_j e_j/\tau$, where $i\neq j$. Their
Euclidean scalar product is $0$, and both have squared norm
$1/\tau^2$. Thus, the distance formula
\eqref{eq:geodesic_distance_in_B_d_kappa} gives, for $\kappa\neq0$,
\[
C
=
C_\kappa(\ell)
=
\frac{1}{1+\kappa/\tau^2}
=
\frac{\tau^2}{\tau^2+\kappa}.
\]
The identity remains valid for $\kappa=0$, since both sides are equal
to $1$. If $\kappa>0$, then $C$ ranges over $(0,1)$ as $\tau$ ranges over
$(0,\infty)$. Since
$C=\cos(\sqrt{\kappa}\,\ell)$, this gives $0<\ell<\pi/(2\sqrt{\kappa})$.
If $\kappa<0$, then $C$ ranges over $(1,\infty)$ as $\tau$ ranges over
$(\sqrt{-\kappa},\infty)$. Since
$C=\cosh(\sqrt{-\kappa}\,\ell)$, this gives
$\ell\in(0,\infty)$. Finally, for $\kappa=0$, one has
$\ell=\sqrt2/\tau$, and hence again $\ell\in(0,\infty)$.

Let $F$ be a $k$-dimensional face of $P$. By applying a signed
permutation of the coordinates, we may assume that
\[
F
=
\conv\left(
\frac{e_1}{\tau},\ldots,\frac{e_{k+1}}{\tau}
\right).
\]
We apply Theorem~\ref{theo:hyperbolic_angles_crosspolytopes} with $k+1$ in
place of $k$. Note that  $s_{k+1}(\kappa)^2 =\kappa+(k+1)\tau^2$.
The relation between $C$ and $\tau$ implies
\begin{equation}\label{eq:regular_crosspolytope_tau_s_ratio}
a
:=
\frac{\tau^2}{s_{k+1}(\kappa)^2}
=
\frac{C}{1+kC}.
\end{equation}
Applying part~(b) of
Theorem~\ref{theo:hyperbolic_angles_crosspolytopes} gives
\begin{align*}
\alpha_x\bigl(N_x(F,P)\bigr)
&=
\frac{1}{\sqrt{2\pi}}
\int_0^\infty
\left(
\Phi(\sqrt a\,y)-\Phi(-\sqrt a\,y)
\right)^{d-k-1}
\eee^{-y^2/2}\dd y
\\
&=
\frac{1}{\sqrt{2\pi}}
\int_0^\infty
\left(2\Phi(\sqrt a\,y)-1\right)^{d-k-1}
\eee^{-y^2/2}\dd y.
\end{align*}
Together with~\eqref{eq:regular_crosspolytope_tau_s_ratio}, this is
precisely~\eqref{eq:regular_crosspolytope_external_angle}.
Similarly, part~(d) of
Theorem~\ref{theo:hyperbolic_angles_crosspolytopes} yields
\begin{align*}
\alpha_x\bigl(T_x(F,P)\bigr)
&=
\frac12
+
\frac{2^{d-k-1}\ii}{2\pi}
\int_0^\infty
\frac{\eee^{-(1+(d-k-1)a)t^2/2}}{t}
\left(
\Phi(\ii\sqrt a\,t)^{d-k-1}
-
\Phi(-\ii\sqrt a\,t)^{d-k-1}
\right) \dd t
\\
&=
\frac12
-
\frac{2^{d-k-1}}{\pi}
\int_0^\infty
\Im \left[
\Phi(\ii\sqrt a\,t)^{d-k-1}
\right] \eee^{-(1+(d-k-1)a)t^2/2} \frac{\dd t}{t}.
\end{align*}
The substitution $y=\sqrt{1+(d-k-1)a}\,t$ gives~\eqref{eq:regular_crosspolytope_internal_angle} since
\[
1+(d-k-1)a
=
\frac{1+(d-1)C}{1+kC},
\qquad
\frac{a}{1+(d-k-1)a}
=
\frac{C}{1+(d-1)C}.
\]

Suppose now that $d$ is even and $\kappa\neq0$. Theorem~\ref{theo:volume_asymmetric_crosspolytope_even_d},  applied with all parameters
equal to $\tau$, gives
\begin{align*}
\Vol_{d,\kappa}(P)
&=
\frac{\omega_{d+1}}{\kappa^{d/2}}
\Bigg[
\frac12
+
\frac{2^d\ii}{2\pi}
\int_0^\infty
\frac{\eee^{-(\kappa+d\tau^2)t^2/2}}{t}
\left(
\Phi(\ii\tau t)^d
-
\Phi(-\ii\tau t)^d
\right)\dd t
\Bigg]
\\
&=
\frac{\omega_{d+1}}{\kappa^{d/2}}
\Bigg[
\frac12
-
\frac{2^d}{\pi}
\int_0^\infty
\Im \left[
\Phi(\ii\tau t)^d
\right]
\eee^{-(\kappa+d\tau^2)t^2/2} \,  \frac{\dd t}{t}
\Bigg].
\end{align*}
The substitution $y=\sqrt{\kappa+d\tau^2}\,t$
yields~\eqref{eq:volume_regular_crosspolytope_curvature_kappa_even_d} since
$
\frac{\tau^2}{\kappa+d\tau^2}
=
\frac{C}{1+(d-1)C}.
$

Finally, suppose that $\kappa<0$ and
$\tau=\sqrt{-\kappa}$. Then $P=\Diamond_\infty^d(\kappa)$ is the regular ideal crosspolytope. For $k\geq1$, we have
\[
a= \frac{\tau^2}{s_{k+1}(\kappa)^2}
=
\frac{-\kappa}{\kappa+(k+1)(-\kappa)}
=
\frac1k,
\qquad
\frac{1/k}{1+(d-k-1)/k}
=
\frac1{d-1},
\qquad
\frac{\tau^2}{\kappa+d\tau^2}
=
\frac1{d-1}.
\]
Consequently, the preceding formulas for $\alpha_x(N_x(F,P))$  and $\alpha_x(T_x(F,P))$ give
\eqref{eq:regular_crosspolytope_ideal_external_angle} and
\eqref{eq:regular_crosspolytope_ideal_internal_angle}. If $d$ is even,
the preceding formula for $\Vol_{d,\kappa}(P)$ gives
\eqref{eq:volume_regular_crosspolytope_ideal_curvature_kappa_even_d}.
\end{proof}

\begin{example}[The Euclidean case $\kappa=0$]
Let $\Diamond_\ell^d(0)$ be a Euclidean regular crosspolytope of edge
length $\ell>0$. Its angle formulas are obtained from
\eqref{eq:regular_crosspolytope_external_angle}
and~\eqref{eq:regular_crosspolytope_internal_angle} by setting $C=1$.
Moreover,
$\Vol_{d,0}(\Diamond_\ell^d(0))=2^{d/2}\ell^d/d!$.
\end{example}

\begin{remark}[Relation to earlier work]
The volume of the regular ideal hyperbolic crosspolytope was studied
earlier by Smith~\cite{Smith1988Thesis,Smith2000Simplexity}.  He proved
the asymptotic formulas
\[
\Vol_{d,-1}\bigl(\Diamond_\infty^d(-1)\bigr)
\sim
\frac{\eee\,2^d}{d!},
\qquad
\frac{\Vol_{d,-1}\bigl(\Diamond_\infty^d(-1)\bigr)}
     {\Vol_{d,0}\bigl(\Diamond_\infty^d(-1)\bigr)}
\sim  \eee,
\qquad d\to\infty.
\]
The volume asymptotic is also quoted by Marshall~\cite{Marshall1998Cubes}, who attributes it to Smith. Smith's thesis~\cite{Smith1988Thesis} also develops numerical
quadrature methods for radial integrals over regular polytopes, which
can be used to compute their hyperbolic volumes numerically.
\end{remark}

\appendix
\section{A Fourier inversion formula}\label{sec:gil_pelaez_inversion}
In this appendix we record a version of the Gil--Pelaez inversion formula.
\begin{lemma}\label{lem:appendix_probability_for_negative_Z_via_char_function}
Let $Z$ be a real-valued random variable with characteristic function $\varphi(t) := \mathbb{E}\eee^{\ii tZ}$, $t\in\mathbb{R}$.
Assume that $\mathbb{E}|Z|<\infty$ and $\varphi\in L^1(\mathbb{R})$.
Then
\[
    \mathbb{P}[Z\leq 0]
    =
    \frac12
-
\frac{1}{2\pi \ii}
\int_0^\infty
\frac{\varphi(t)-\varphi(-t)}{t}\,\dint t
    =
    \frac12
    -
    \frac1\pi
    \int_0^\infty \frac{\operatorname{Im}\varphi(t)}{t}\,\dint t.
\]
Moreover, the integrals on the right-hand side are absolutely convergent.
\end{lemma}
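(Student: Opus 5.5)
The plan is the classical Gil--Pelaez argument, made rigorous through a smoothing of the indicator and the integrability of $\varphi$. Because $\varphi\in L^1(\R)$, the distribution of $Z$ has a bounded continuous density
\[
f(z)=\frac1{2\pi}\int_{\R}\eee^{-\ii tz}\varphi(t)\,\dd t .
\]
In particular $\P[Z=0]=0$, so $\P[Z\le0]$ equals the value at $0$ of a continuous distribution function.

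Write $\mathrm{sgn}(z)=\lim_{T\to\infty}\frac{2}{\pi}\int_0^T\frac{\sin(tz)}{t}\,\dd t$. The partial integrals $\int_0^T \sin(tz)/t\,\dd t$ are bounded uniformly in $T$ and $z$. Dominated convergence, with $Z\neq0$ almost surely, then gives
\[
\E[\mathrm{sgn}(Z)]=\lim_{T\to\infty}\frac2\pi\int_0^T\frac{\E\sin(tZ)}{t}\,\dd t .
\]
Fubini on $[0,T]$ is legitimate because $|\sin(tz)/t|\le |z|$ and $\E|Z|<\infty$. Since $\E\sin(tZ)=\Im\varphi(t)=(\varphi(t)-\varphi(-t))/(2\ii)$ and $\P[Z\le0]=\frac12(1-\E\,\mathrm{sgn}(Z))$, this yields both displayed forms, provided the limit $T\to\infty$ can be replaced by an absolutely convergent integral.

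The remaining point is absolute convergence of $\int_0^\infty |\Im\varphi(t)|/t\,\dd t$. Near $t=0$ we use $|\Im\varphi(t)|\le \E|\sin(tZ)|\le t\,\E|Z|$, so the integrand is bounded by $\E|Z|$ on $[0,1]$. On $[1,\infty)$ we use $|\Im\varphi(t)|/t\le|\varphi(t)|\in L^1$. Hence the integral converges absolutely, the limit in $T$ equals the full integral, and the two expressions on the right agree.

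The only delicate step is the exchange of limit and expectation in the sign representation. It relies on the uniform bound $\sup_{T,z}\bigl|\int_0^T \sin(tz)/t\,\dd t\bigr|<\infty$ and on $\P[Z=0]=0$; the latter is exactly where the hypothesis $\varphi\in L^1$ enters, while $\E|Z|<\infty$ handles the small-$t$ end.
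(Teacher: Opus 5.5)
Your proof is correct and follows the same route as the paper. You reduce to $\P[Z\le 0]=\frac12(1-\E\sgn(Z))$ via $\P[Z=0]=0$ from the bounded density, represent $\sgn$ as a limit of regularized sine integrals, swap expectation and integral by Fubini using $\E|Z|<\infty$, and pass to the limit using absolute integrability of $\Im\varphi(t)/t$, split at $t=1$. The only difference is the regularization: you use the sharp cutoff $\int_0^T$ with the uniform boundedness of the sine integral, whereas the paper uses Abel damping, $\arctan(x/\varepsilon)=\int_0^\infty \eee^{-\varepsilon t}\sin(tx)/t\,\dd t$, whose limit passage needs only $|\arctan|\le\pi/2$.
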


In fact, the formula is true for every real-valued random variable $Z$ under the sole condition that $\P[Z=0] = 0$, see~\cite{gilpelaez51} and~\cite[Exercise~10 on p.~293]{ChowTeicher1997}, but then the integral has to be understood as an improper integral
both at $0$ and at $\infty$; see~\cite{Wendel_on_GilPelaez1961}. For our purposes, it is convenient to impose the additional assumptions $\mathbb{E}|Z|<\infty$ and $\varphi\in L^1(\mathbb{R})$, which make the integral absolutely convergent.

\begin{proof}[Proof of Lemma~\ref{lem:appendix_probability_for_negative_Z_via_char_function}]
For $t \in \R$ define
\begin{equation}\label{eq:proof_gil_pelaez_psi_t_def}
    \psi(t)
    :=
    \E\sin(tZ)
    =
    \E \left(\frac{\eee^{\ii tZ}-\eee^{-\ii tZ}}{2 \ii}\right)
    =
    \frac{\varphi(t) - \varphi(-t)}{2 \ii} = \operatorname{Im}\varphi(t).
\end{equation}
It suffices to prove
$$
    \P[Z\leq 0]
    =
    \frac12 - \frac1\pi \int_0^\infty \frac{\psi(t)}{t} \,\dd t.
$$

We first check that  $\int_{0}^\infty |\psi(t)/t| \, \dd t < \infty$. For $0<t\leq 1$, observe that
$
    |\psi(t)|
    \leq
    \E|\sin(tZ)|
    \leq
    t\, \E|Z|.
$
Therefore
$$
    \int_0^1 \left|\frac{\psi(t)}{t}\right|\,\dd t
    \leq
    \E|Z|
    <
    \infty.
$$
For $t\geq 1$,  we have
$$
    \left|\frac{\psi(t)}{t}\right|
    \leq
    \left| \psi(t) \right|
    \leq
    \frac{|\varphi(t)|+|\varphi(-t)|}{2}.
$$
Since $\varphi \in L^1(\mathbb{R})$, this implies
$$
    \int_1^\infty \left|\frac{\psi(t)}{t}\right|\,\dd t
    <
    \infty.
$$
Thus $|\psi(t)/t|$ is integrable over $(0,\infty)$.

Next, since $\varphi\in L^1(\R)$, the Fourier inversion theorem implies
that the distribution of $Z$ has a bounded continuous density. In
particular, $\P[Z=0]=0$ and hence
$$
    \mathbb P[Z\leq 0]
    =
    \frac12\left(1-\E\sgn (Z)\right).
$$

It remains to express $\E \sgn (Z)$ in terms of $\psi$. A direct integration of the Fourier inversion formula over $(-\infty,0]$ would require an unjustified interchange of non-absolutely convergent integrals. Instead, the sign function is approximated by the $\arctan$ function as follows.
For \(\varepsilon>0\) and \(x\in\mathbb{R}\), there is the elementary identity
\[
    \arctan\left(\frac{x}{\varepsilon}\right) = \int_0^\infty \eee^{-\varepsilon t}\frac{\sin(tx)}{t}\,\dint t.
\]
Applying this identity with $x=Z$, taking expectation and using Fubini's theorem, which is justified by
$$
    \E \int_0^\infty \eee^{- \varepsilon t} \left| \frac{\sin(tZ)}{t} \right| \,\dd t
    \leq
    \E \int_0^\infty \eee^{- \varepsilon t}|Z|\,\dd t
    =
    \frac{\E|Z|}{\varepsilon}
    <
    \infty,
$$
we obtain
\begin{equation}\label{eq:proof_gil_pelaez_eps}
\E \arctan \left( \frac{Z}{\varepsilon} \right)
=
\E\left[ \int_0^{\infty} \eee^{-\varepsilon t} \frac{\sin(tZ)}{t} \,\dd t \right]
=
\int_0^\infty \eee^{-\varepsilon t} \frac{\psi(t)}{t} \, \dd t.
\end{equation}
We now let $\varepsilon \downarrow 0$.
Since $\psi(t)/t$ is absolutely integrable, dominated convergence yields
$$
\lim_{\varepsilon \downarrow 0} \int_0^\infty \eee^{-\varepsilon t}\frac{\psi(t)}{t}\,\dd t
=    \int_0^\infty \frac{\psi(t)}{t}\,\dd t.
$$
Moreover, since $\P[Z=0]=0$,
$$
    \arctan\left(\frac{Z}{\varepsilon}\right)
    \overset{a.s.}{\underset{\eps \downarrow 0} \longrightarrow}
    \frac{\pi}{2}\sgn(Z).
$$
Hence, the dominated convergence theorem gives
$$
\lim_{\eps \downarrow 0}\E\arctan\left(\frac{Z}{\varepsilon}\right)
=
\frac{\pi}{2}\E\sgn(Z).
$$
Thus, letting $\eps \downarrow 0$ in~\eqref{eq:proof_gil_pelaez_eps} gives
$$
    \frac{\pi}{2}\E\sgn(Z) = \int_0^\infty \frac{\psi(t)}{t}\,\dd t.
$$
Therefore we obtain
\[
    \P[Z\leq 0]
    =
    \frac12(1-\E\sgn(Z))
    =
    \frac12-\frac1\pi
    \int_0^\infty \frac{\psi(t)}{t}\,\dd t,
\]
which proves the claimed formula in view of~\eqref{eq:proof_gil_pelaez_psi_t_def}.
\end{proof}

\section*{Acknowledgement}
Supported by the DFG under Germany's Excellence Strategy EXC 2044 - 390685587, Mathematics M\"unster: \emph{Dynamics-Geometry-Structure}, by the DFG priority program SPP 2265 \emph{Random Geometric Systems}, and by the DFG Research Training Group \emph{Rigorous Analysis of
Complex Random Systems} (RTG 3027, Project Number 524444762).

\section*{Declarations}

\subsection*{Statement on the use of generative AI}

ChatGPT assisted the authors with mathematical discussions, language editing, literature exploration, and proof checking.
All suggestions produced by the AI were critically examined and
independently verified by the authors.

\subsection*{Conflict of interest statement}
The authors declare that they have no conflicts of interest.

\subsection*{Data availability statement}
We do not analyse or generate any datasets.

\addcontentsline{toc}{section}{References}
\bibliography{angles_volumes_crosspolytopes_boxes_bib}
\bibliographystyle{plainnat}

\vspace{1cm}

\footnotesize

\textsc{Zakhar Kabluchko: Institut f\"ur Mathematische Stochastik,
	Universit\"at M\"unster,
	Orl\'eans-Ring 10,
	48149 M\"unster, Germany}\\
\textit{E-mail}: \texttt{zakhar.kabluchko@uni-muenster.de}\\

\textsc{Philipp Schange: Institut f\"ur Mathematische Stochastik, Universit\"at M\"unster,
	Orl\'eans-Ring 10,
	48149 M\"unster, Germany}\\
\textit{E-mail}: \texttt{philipp.schange@uni-muenster.de}

\end{document}